\newcommand{\tmaffiliation}[1]{\thanks{#1}}
\newtheorem{theorem}{Theorem}[section]
\newtheorem{lemma}[theorem]{Lemma}
\newtheorem{corollary}[theorem]{Corollary}
\newtheorem{proposition}[theorem]{Proposition}
\newtheorem{example}[theorem]{Example}
\newtheorem{remark}[theorem]{Remark}
\newtheorem{definition}[theorem]{Definition}
\def\bit{\begin{itemize}}
\def\eit{\end{itemize}}
\def\bc{\begin{center}}
\def\ec{\end{center}}
\def\bthm{\begin{theorem}}
\def\ethm{\end{theorem}}
\def\bcor{\begin{corollary}}
\def\ecor{\end{corollary}}
\def\bprop{\begin{proposition}}
\def\eprop{\end{proposition}}
\def\blem{\begin{lemma}}
\def\elem{\end{lemma}}
\def\brem{\begin{remark}}
\def\erem{\end{remark}}
\def\bdes{\begin{description}}
\def\edes{\end{description}}
\def\iti{\item[(i)]}
\def\itii{\item[(ii)]}
\def\beq{\begin{equation}}
\def\eeq{\end{equation}}
\def\ben{\begin{enumerate}}
\def\een{\end{enumerate}}
\def\beqar{\begin{eqnarray}}
\def\eeqar{\end{eqnarray}}
\def\beqarr{\begin{eqnarray*}}
\def\eeqarr{\end{eqnarray*}}
\def\RR{{\mathbb R}}  
\def\CC{{\mathbb C}}
\def\EE{{\mathbb E}}
\def\PP{{\mathbb P}}
\def\QQ{{\mathbb Q}}
\def\cA{\mathcal{A}}  
\def\cD{\mathcal{D}}  \def\cF{\mathcal{F}}
\def\cG{\mathcal{G}}  \def\cI{\mathcal{I}}
\def\cP{\mathcal{P}} \def\cQ{\mathcal{Q}} 
 \def\cW{\mathcal{W}}
\def\NN{{\mathbb N}}       
\def\one{{\bf 1}}
\def\eps{\epsilon}
\def\p{\varphi}
\def\part{\partial}
\def\d#1dt{\frac{d#1}{dt}}    
\begin{document}

\title{Stochastic flows and an interface SDE on metric graphs}

\author{Hatem Hajri
	\tmaffiliation{Universit\'e du Luxembourg, Email:
Hatem.Hajri.fn@gmail.com\newline Research supported by the National
Research Fund, Luxembourg, and cofunded under the Marie Curie
Actions of the European Comission (FP7-COFUND).}
  \and
  Olivier Raimond
  \tmaffiliation{Universit\'e Paris Ouest Nanterre La
D\'efense, Email: oraimond@u-paris10.fr}}

\maketitle

\begin{abstract}
This paper consists in the study of a stochastic differential equation on a metric graph, called an interface SDE $(\hbox{ISDE})$. 
To each edge of the graph is associated an independent white noise, which drives $(\hbox{ISDE})$ on this edge. 
This produces an interface at each vertex of the graph. We first do our study on star graphs with $N\ge 2$ rays. The case $N=2$ corresponds to the perturbed Tanaka's equation recently
studied by Prokaj \cite{MR18} and Le Jan-Raimond \cite{MR000} among others.
It is proved that $(\hbox{ISDE})$ has a unique in law solution, which is a Walsh's Brownian motion. This solution is strong if and only if $N=2$.

Solution flows are also considered. There is a (unique in law) coalescing stochastic flow of
mappings $\p$ solving $(\hbox{ISDE})$. For $N=2$, it is the only solution flow. For $N\ge 3$, $\p$ is not a strong solution and by filtering $\p$ with respect to the
family of white noises, we obtain a (Wiener) stochastic flow of kernels solution of $(\hbox{ISDE})$.
There are no other Wiener solutions.
Our previous results \cite{MR501011} in hand, these results are extended to more general metric graphs.

The proofs involve the study of $(X,Y)$ a Brownian motion in a two dimensional quadrant obliquely reflected at the boundary, with time dependent
angle of reflection. We prove in particular that, when $(X_0,Y_0)=(1,0)$ and if $S$ is the first time $X$ hits $0$, then $Y_S^2$ is a beta random variable of the second kind. We also calculate $\EE[L_{\sigma_0}]$, where $L$ is the local time accumulated at the boundary, and $\sigma_0$ is the first time $(X,Y)$ hits $(0,0)$.
\end{abstract}

\section{Introduction}
In \cite{MR18}, Prokaj proved that pathwise uniqueness holds for the perturbed Tanaka's equation 
\begin{equation}\label{PTSDE}
dX_t=\text{sgn}(X_t) dW^1_t+\lambda dW^2_t
\end{equation}
for all $\lambda\ne 0$, where $W^1$ and $W^2$ are two independent Brownian motions. 
When $\lambda=1$, after rescaling, 
setting $W^+=\frac{W^1+W^2}{\sqrt 2}$ and $W^-=\frac{W^2-W^1}{\sqrt 2}$, 
\eqref{PTSDE} rewrites
\begin{equation}\label{yaa}
dX_t=1_{\{X_t>0\}}dW^+_t+1_{\{X_t\le 0\}}dW^-_t.
\end{equation}
Using different techniques, the same result in the case of (\ref{yaa}) has also been obtained by Le Jan and Raimond \cite{MR000} (see also \cite{MR3055262,MR99905733}).

In this paper, an analogous SDE, called an Interface SDE (or $(\hbox{ISDE})$), on metric graphs is studied.
The first graphs we consider are star graphs with $N\ge 2$ rays $(E_i)_{1\le i\le N}$. 
Let $(W_i)_{1\le i\le N}$ be $N$ independent Brownian motions.
Then a solution to $(\hbox{ISDE})$ is a Walsh's Brownian motion that follows  $W^i$ during an excursion in $E_i$.
Note that $\eqref{yaa}$ is an $(\hbox{ISDE})$ on a star graph with $N=2$ rays.
Without Tsirelson's results \cite{MR1487755}, one could have the intuition that for $N\ge 3$, the situation is exactly the same as for (\ref{yaa}), i.e. $(\hbox{ISDE})$ has a unique strong solution. 
As this intuition is misleading, $(\hbox{ISDE})$ seems to be an interesting SDE.
In a second time, $(\hbox{ISDE})$ is extended to more general metric graphs with finite number of vertices and edges.
It is defined in the same way by attaching to the edges a family of independent Brownian motions. 

Stochastic flows (of mappings and kernels) solutions of $(\hbox{ISDE})$ are also studied. 
Now, we do not only follow the motion of one particle driven by $(\hbox{ISDE})$ but of a family of particles.
The law of a stochastic flow is determined by a consistent family of $n$-point motions (see \cite{MR2060298}).
These $n$-point motions consist of $n$ solutions of $(\hbox{ISDE})$, started from eventually $n$ different locations.
When $N=2$, Le Jan and Raimond \cite{MR000} proved that \eqref{yaa} generates a unique stochastic flow of mappings, which is coalescing in the sense that two solutions of \eqref{yaa} meet in finite time. 
When $N\ge 3$, we prove that there is still a stochastic coalescing flow of mappings, but it is no longer the unique stochastic flow solution of $(\hbox{ISDE})$.
 
To study $(\hbox{ISDE})$, we establish new results on obliquely reflected Brownian motions $(X,Y)$ on the positive quadrant.
These are of independent interest.
In particular, when  $(X_0,Y_0)=(x,0)$, we give the law of $Y_S$, where $S$ is the first time $X$ hits $0$.
We also give a necessary and sufficient condition on the angles of reflection for the integrability of the local time cumulated at the boundaries before $(X,Y)$ hits $(0,0)$.

\medskip
Let us now give some notations that will be used throughout this paper and recall the definition of stochastic flows.
\begin{itemize}
\item If $M$ is a locally compact metric space, $C_0(M)$ will denote the set of continuous functions on $M$ vanishing at $\infty$.
We will denote by $\mathcal{B}(M)$ the Borel $\sigma$-field on $M$ and by $\cP(M)$ the set of Borel probability measures on $M$.
\item A kernel on $M$ is a measurable mapping $K:M\to\cP(M)$. For $x\in M$, the probability measure $K(x)$ will also be denoted by $K(x,dy)$. We recall that two kernels $K_1$ and $K_2$ may be composed by the formula $K_1K_2(x,dz)=\int_{y\in M} K_1(x,dy)K_2(y,dz)$. 
\item The two-dimensional quadrant $[0,\infty[^2$ is denoted by $\mathcal Q$. Its boundary is $\partial \mathcal{Q}:= \partial_1\mathcal{Q}\cup\partial_2\mathcal{Q}$, where $\partial_1\mathcal Q=[0,\infty[\times \{0\}$ and $\partial_2\mathcal Q=\{0\}\times[0,\infty[$. 
We also set $\mathcal{Q}^*=\mathcal{Q}\setminus\{(0,0)\}$.
 \item For $X$ a continuous semimartingale, we will denote by $L_t(X)$ its
symmetric local time process at $0$, i.e.
$$L_t(X)=\lim_{\epsilon\rightarrow0}\frac{1}{2\epsilon}\int_{0}^{t}1_{\{|X_s|\le\epsilon\}}d\langle
X\rangle_s.$$
\end{itemize}

\paragraph{Stochastic flows\,:}
Let $M$ be a locally compact metric space and $(\Omega,\cA,\PP)$ be a probability space.

\begin{definition} \label{defsfm} 
A stochastic flow of mappings (SFM) $\p$ on $M$ is a random family $(\varphi_{s,t})_{s\le t}$ of measurable mappings on $M$ such that for all $(s,x)\in\RR\times M$, $\p_{s,s}(x)=x$ and
\begin{enumerate}
\item For all $h\in\RR$, $s\le t$, $\p_{s+h,t+h}$ is distributed like $\p_{s,t}$;
\item For all $s_1\le t_1\le\cdots \le s_n\le t_n$, the family $\{\p_{s_i,t_i}, 1\le i\le n\}$ is independent;
\item For all $s\le t\le u$ and all $x\in M$, a.s. $\p_{s,u}(x)=\p_{t,u}\circ\p_{s,t}(x)$;
\item The mapping $(s,t,x)\mapsto \p_{s,t}(x)$ is continuous in probability;
\item For all $s\le t$,  $\p_{s,t}(x)$ converges in probability towards $\infty$ as $x\to\infty$.
\end{enumerate}
\end{definition}

Stochastic flows of kernels (SFK's) $K$ on $M$ are defined as are defined SFM's, i.e. they are random families $(K_{s,t})_{s\le t}$ of measurable kernels on $M$ such that, for all $(s,x)\in\RR\times M$, $K_{s,s}(x)=\delta_x$ and satisfying 1.,2.,3.,4. and 5. in Definition \ref{defsfm} with $\p$ replaced by $K$ and $\p_{t,u}\circ\p_{s,t}$ replaced by $K_{s,t}K_{t,u}$.
We refer to Le Jan-Raimond \cite{MR2060298} for a more detailed definition and study.
Note that if $\varphi$ is a SFM, then $K$ defined by $K_{s,t}(x)=\delta_{\p_{s,t}(x)}$ is a SFK. When it is the case, by misuse of language, we will then say that $K$ is a SFM.  
All SFM's (resp. all SFK's) considered in the following will be measurable in the sense that the mapping $(s,t,x,\omega)\mapsto \p_{s,t}(x,\omega) \hbox{ (resp. $K_{s,t}(x,\omega)$)}$ is measurable.


\section{Main results}
We give here the results proved in this paper, and announced in the introduction.
\subsection{The interface SDE on a star graph}\label{interf}

\subsubsection{Walsh's Brownian motions on star graphs}\label{defwbm}
Let $G$ be a star graph with $N\ge 1$ rays $(E_i)_{1\le i\le N}$ and origin denoted by $0$, i.e.  $G=\cup_{i=1}^N E_i$ is a locally compact metric space such that
\begin{itemize}
\item $E_i\cap E_j=\{0\}$ if $i\neq j$, 
\item for all $i$, $E_i$ is isometric to $[0,\infty[$ via a mapping $e_i:[0,\infty[\to {E}_i$\end{itemize} 
and the metric $d$ on $G$ is such that
$d(x,y)=|v-u|$ if $x=e_i(u)$, $y=e_i(v)$ for some $i,u,v$ and $d(x,y)=d(x,0)+d(y,0)$ otherwise. 
Set for $x\in G$, $|x|=d(x,0)$, $G^*=G\setminus\{0\}$ and for $1\le i\le N$, $E_i^*=E_i\setminus\{0\}$.

\smallskip
Let us now define the Brownian motions on $G$, called Walsh's Brownian motions (abbreviated WBM's). 
These processes behave like a standard one-dimensional Brownian motion along a ray away from $0$ and, when they hit $0$, they randomly \textquotedblleft choose \textquotedblright\ a ray $E_i$.

Suppose given a family of parameters $(p_1,\dots,p_N) \in ]0,1[^N$ such that $\sum_ {i=1}^{N}p_i=1$.

\begin{definition} \cite{MR1022917} Let $(T^+_t,t\ge 0)$ be the semigroup associated to the reflecting Brownian motion on $\RR_+$ and let $(T^0_t,t\ge 0)$ be the semigroup associated to the Brownian motion on $\RR_+$ killed at $0$.
Then for $f\in C_0(G)$ and $x\in E_i$, denoting $f_j(r)=f\circ e_j(r)$ for $1\le j\le N$ and 
$\bar{f}(r)=\sum_{j=1}^N p_j f_j(r)$,
$$P_tf(x) = T_t^+ \bar{f}(|x|) + T^0_t(f_i-\bar{f})(|x|)$$
defines a Feller semigroup on $C_0(G)$ associated to a diffusion called a Walsh's Brownian motion on $G$.

If a filtration $(\mathcal{G}_t)_t$ is given, then $X$ is a $(\cG_t)_t$-WBM 
if $X$ is adapted to $(\cG_t)_t$ and if given $\cG_t$, $(X_{t+s},\,s\ge 0)$ 
is a WBM started at $X_t$.
\end{definition}

The family of parameters $(p_1,\dots,p_N)$ being fixed later on, all WBM's considered thereafter will be associated to this family of parameters.

\paragraph{The domain $\cD$\,:} 
We denote by $C_b^2(G^*)$ the set of all continuous
functions $f:G\to\mathbb{R}$ such that for all $i\in
[1,N]$, $f\circ e_i$ is $C^2$ on $]0,\infty[$ with bounded first
and second derivatives both with finite limits at $0$.
For $f\in C_b^2(G^*)$ and $x=e_i(r)\in G^{*}$, set $f'(x)=(f\circ
e_i)'(r)$, $f''(x)=(f\circ e_i)''(r)$.
When $x=0$, set $f'(0)=\sum_{i=1}^{N}p_i(f\circ e_i)'(0+)$ and
$f''(0)=\sum_{i=1}^{N}p_i(f\circ e_i)''(0+)$. Finally, set
\begin{equation}\label{domain}
\mathcal D=\big\{f\in C_b^2(G^*): f'(0)=0\big\}.
\end{equation}
Set $\mathcal D_0=\{f\in \mathcal D, f, f''\in C_0(G)\}$. Then $\cD_0$ is contained in the domain of the generator $A$ of $(P_t)_t$.
We have $Af=f''/2$ for $f\in\cD_0$ and $(A,\mathcal D_0)$ generates the Feller semigroup $(P_t)$ in a unique way. 
\subsubsection{The interface SDE}
\begin{definition}\label{eds}
A solution of the interface SDE $(\hbox{\hbox{\em ISDE}})$ on a star graph $G$ with $N\ge 1$ 
 rays $(E_i)_{1\le i\le N}$ is a
pair $(X,W)$ of processes defined on a filtered probability space
$(\Omega,(\mathcal F_t)_t,\mathbb P)$ such that
\begin{itemize}
\item[(i)] $W=(W^1,\dots,W^N)$ is a standard $(\mathcal
F_t)$-Brownian motion in $\mathbb R^N$;
\item[(ii)] $X$ is an $(\mathcal F_t)$-adapted continuous process on $G$;
\item[(iii)] For all $f\in \mathcal{D}$,
\begin{equation}\label{kdd00}
f(X_t)=f(X_0)+\sum_{i=1}^N \int_{0}^{t}f'(X_s)1_{\{X_s\in
E_i\}}dW^i_s+\frac{1}{2}\int_{0}^{t} f''(X_s)ds.
\end{equation}
\end{itemize}
We will say it is a strong solution if $X$ is adapted to the filtration $(\mathcal{F}^W_t)_t$.
\end{definition}
In the case where $N=1$, $(\hbox{ISDE})$ is easy to study. There is no interface and $(\hbox{ISDE})$ has a unique (strong) solution, which is a reflected Brownian motion. Thus in the following, we assume $N\ge 2$.
It can easily be seen (by choosing for each $i$ a function $f_i\in\mathcal{D}$ 
such that $f_i(x)=|x|$ if $x\in E_i$) that on $E_i$, away from $0$, 
$X$ follows the Brownian motion $W^i$. 

In the case $N=2$, one can assume $E_1=]-\infty,0]$ and $E_2=[0,\infty[$.
Applying It\^o-Tanaka's formula (or Theorem \ref{nh} below), we see that $(\hbox{ISDE})$ is equivalent to the skew Brownian motion version of (\ref{yaa}):
\begin{equation}\label{tgv0}
dX_t=1_{\{X_t>0\}}dW^+_t+1_{\{X_t\le 0\}}dW^-_t+(2p_1-1)dL_t(X).
\end{equation}
Note that when $p_1=1/2$, \eqref{yaa} and \eqref{tgv0} coincide. 
Our first result is the following
\bthm \label{propunic} For all $x\in G$, 
\begin{enumerate}
\item[(i)] There is a, unique in law, solution $(X,W)$ of $(\hbox{ISDE})$, with $X_0=x$. 
Moreover $X$ is a WBM.
\item[(ii)] The solution of $(\hbox{ISDE})$ is a strong solution if and only if $N=2$.
\end{enumerate}
\ethm

To prove (ii), we will check that pathwise uniqueness holds for $(\hbox{ISDE})$  when $N=2$.
This implies that the solution $(X,W)$ is a strong one.
The fact that for each $N\ge 3$, 
$(X,W)$ is not a strong solution is a consequence of a result of Tsirelson \cite{MR1487755}
(Theorem \ref{Ts} below) which states that if $N\ge 3$, there does not exist any $(\mathcal F_t)_t$-WBM on $G$ 
with $(\mathcal F_t)_t$ a Brownian filtration (see also \cite{MR1655299}).

\subsubsection{Stochastic flows solutions of (\hbox{ISDE})}

Let $(\Omega,\mathcal A,\PP)$ be a probability space. We define below what is meant by a flow solution of (\hbox{ISDE}). 
\begin{definition}\label{edsphi}
Let $\varphi$ be a SFM on $G$
and let $\mathcal W=(W^i, 1\le i\le N)$ be a family of independent real white
noises (see \cite[Definition 1.10]{MR2060298}). We say that $(\varphi,\mathcal W)$ solves $(\hbox{ISDE})$ if
for all $s\leq t$, $f\in \mathcal D$ and $x\in G$, a.s.
$$f(\varphi_{s,t}(x))=f(x)+\sum_{i=1}^{N}\int_s^t(1_{E_i}f')(\varphi_{s,u}(x))dW^i_u
+ \frac{1}{2}\int_s^tf''(\varphi_{s,u}(x))du.$$
We will say it is a Wiener solution if for all $s\le t$, 
$\mathcal F^{\varphi}_{s,t}\subset\mathcal F^{\mathcal W}_{s,t}$.
\end{definition}
It will be shown that if $(\varphi,\mathcal W)$ solves $(\hbox{ISDE})$,
then $\mathcal F^{\mathcal W}_{s,t}\subset\mathcal F^{\varphi}_{s,t}$ for all $s\le t$ and
thus we may just say $\varphi$ solves $(\hbox{ISDE})$. 
Note that if $\varphi$ is a Wiener solution, 
then $\mathcal F^{\varphi}_{s,t}=\mathcal F^{\mathcal W}_{s,t}$ for all $s\le t$.

We will prove the following
\begin{theorem}\label{thmphi}
\begin{enumerate} 
\item [(i)] There exists a SFM $\varphi$ solution of $(\hbox{ISDE})$.
This solution is unique in law. 
\item [(ii)] The SFM $\varphi$ is coalescing
in the sense that
for all $s\in\RR$ and $(x,y)\in G^2$, a.s.,
$$\inf\{t\ge s :\;
\varphi_{s,t}(x)=\varphi_{s,t}(y)\}\,<\,\infty.$$
\item [(iii)] The SFM $\varphi$ is a Wiener solution if and only if $N=2$.
\end{enumerate}
\end{theorem}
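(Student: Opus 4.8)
The plan is to build the flow via a consistent family of $n$-point motions and invoke the Le Jan--Raimond machinery \cite{MR2060298}, then address coalescence and the Wiener-property dichotomy separately. First I would construct, for each $n$, the $n$-point motion $(X^1,\dots,X^n)$ on $G^n$: these are $n$ Walsh's Brownian motions, each following the same family $\mathcal W=(W^i)_{1\le i\le N}$, coalescing when they meet. The delicate point is that away from the origin two coordinates on the \emph{same} ray $E_i$ are driven by the \emph{same} white noise $W^i$, so before meeting they move by the same increments and hence, once on a common ray, they stay together until one of them returns to $0$; coordinates on different rays, or a coordinate at $0$, evolve by the joint martingale problem \eqref{kdd00}. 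I would check that this prescription defines a \emph{Feller, consistent} family (compatibility under permutations and restrictions is immediate; the Feller property on $G^n$ reduces to the one-point Feller property of the WBM together with continuity of hitting times). The main obstacle here is well-posedness of the $n$-point martingale problem: for $n=2$ this is essentially the content of Theorem \ref{propunic}, and the general case must be bootstrapped from it by conditioning on the order in which coordinates coalesce, using that between coalescence events the dynamics decouples into lower-dimensional problems — this is where most of the technical work lies. Once the consistent family is in hand, \cite[Theorem 2.1 or its analogue]{MR2060298} yields a SFK $K$; because the $n$-point motion is purely coalescing (the two-point motion hits the diagonal and stays, by the explicit meeting-time computation behind Theorem \ref{propunic}), $K$ is carried by Dirac masses, i.e.\ it is a SFM $\varphi$. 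Uniqueness in law of $\varphi$ follows from uniqueness in law of each $n$-point motion, which in turn follows from Theorem \ref{propunic}(i) by the same conditioning argument; this proves (i).

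For (ii), coalescence, I would reduce to the two-point motion: it suffices to show that for any $(x,y)\in G^2$, a.s.\ $\inf\{t\ge s:\varphi_{s,t}(x)=\varphi_{s,t}(y)\}<\infty$. If $x$ and $y$ start on the same ray this is immediate (they are driven by the same $W^i$ and meet as soon as the two one-dimensional Brownian motions do, or when one hits $0$). If they start on different rays, both $|\varphi_{s,t}(x)|$ and $|\varphi_{s,t}(y)|$ are reflected-Brownian-type processes that hit $0$ in finite time; at such a time the two points are both at $0$, hence equal. The quantitative input — finiteness and, if needed, integrability of these hitting/meeting times — is supplied by the reflected-Brownian-motion estimates advertised in the abstract (the computation of $\EE[L_{\sigma_0}]$ and the law of $Y_S$), so I would cite the relevant later section rather than redo it.

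For (iii): if $N=2$, Theorem \ref{propunic}(ii) gives pathwise uniqueness for $(\hbox{ISDE})$, so each coordinate of each $n$-point motion is $\mathcal F^{\mathcal W}$-measurable, whence $\mathcal F^\varphi_{s,t}\subset\mathcal F^{\mathcal W}_{s,t}$ and $\varphi$ is a Wiener solution. For $N\ge 3$, suppose toward a contradiction that $\varphi$ is a Wiener solution. Then $\mathcal F^\varphi_{s,t}=\mathcal F^{\mathcal W}_{s,t}$, so $\mathcal F^{\mathcal W}$ is a Brownian filtration; but $t\mapsto\varphi_{s,t}(x)$ is then an $(\mathcal F^{\mathcal W}_{s,t})_t$-adapted WBM on $G$ with $N\ge 3$ rays living in a Brownian filtration, contradicting Tsirelson's theorem \cite{MR1487755} (Theorem \ref{Ts} below). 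Hence $\varphi$ is not a Wiener solution when $N\ge 3$. The step I expect to be genuinely hard is the well-posedness and Feller property of the $n$-point motions for $n\ge 3$ in (i); coalescence in (ii) is routine modulo the reflected-diffusion lemmas, and the dichotomy in (iii) is a clean consequence of Theorems \ref{propunic} and \ref{Ts}.
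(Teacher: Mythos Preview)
Your overall architecture---build a consistent Feller family of $n$-point motions, apply the Le Jan--Raimond theorem, and deduce (iii) from Theorem \ref{propunic}(ii) and Tsirelson---is exactly what the paper does, and your treatment of (iii) is correct and matches the paper's one-line reduction.

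However, your argument for (ii) contains a genuine gap. You write that if $x$ and $y$ are on different rays, ``both $|\varphi_{s,t}(x)|$ and $|\varphi_{s,t}(y)|$ are reflected-Brownian-type processes that hit $0$ in finite time; at such a time the two points are both at $0$, hence equal.'' This is false: each coordinate hits $0$ infinitely often, but there is no reason they hit $0$ \emph{simultaneously}. Likewise, when two coordinates are on the same ray $E_i$ they are driven by the \emph{same} $W^i$, so their distance is constant---they do not ``meet as soon as the two one-dimensional Brownian motions do''; they never meet while both are on that ray. Coalescence is therefore not at all routine. The paper's mechanism is the following: one reduces to the two-point motion started at $(x,0)$, and associates to it, via a time change that deletes the time spent with both coordinates on the same ray, an obliquely reflected Brownian motion $Z^r$ in the quadrant $\mathcal Q$ with \emph{time-dependent} angles of reflection (Section \ref{orbm2pt}). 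The coalescence time $\tau_\infty$ corresponds to $Z^r$ hitting the corner. One then proves (Section \ref{loctip}) that the boundary local time $L_{\sigma_0}$ of $Z^r$ is a.s.\ finite; since $L_{\tau_\infty}(|X|)+L_{\tau_\infty}(|Y|)\le C\,L_{\sigma_0}$ and $|X|$ is a reflected Brownian motion, finiteness of its local time forces $\tau_\infty<\infty$ (Proposition \ref{coal}). You cite ``the reflected-Brownian-motion estimates'' but do not identify this reduction, which is the heart of the matter.

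A smaller but related gap sits in (i): the Feller property of the $n$-point semigroup does \emph{not} reduce to the one-point Feller property of the WBM plus continuity of hitting times. What is needed (see \cite[Lemma 1.11]{MR2060298}) is the two-point estimate $\lim_{d(x,y)\to 0}\PP^{(2)}_{(x,y)}[d(X_t,Y_t)>\epsilon]=0$, and this again comes from the obliquely reflected picture: it is the statement that as the starting point of $Z^r$ approaches the corner, so does $\sup_{t<\sigma_0}|Z^r_t|$ in probability (Remark \ref{incl}). Without this, you cannot invoke Theorem \ref{610}.
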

Note that (iii) in this theorem is a consequence of (ii) in Theorem \ref{propunic}.
We will also be interested in SFK's solving $(\hbox{ISDE})$ in the following sense.
\begin{definition}\label{edsSFK}
Let $K$ be a SFK on $G$ 
and $\mathcal W=(W^i, 1\le i\le N)$ be a family of independent real white noises.
We say that $(K,\mathcal W)$ solves $(\hbox{ISDE})$ if
for all $s\leq t$, $f\in \mathcal D$ and $x\in G$, a.s.
\begin{equation}\label{ker}
K_{s,t}f(x)=f(x)+\sum_{i=1}^{N}\int_s^t
K_{s,u}(1_{E_i}f')(x)dW^i_u + \frac{1}{2}\int_s^tK_{s,u}f''(x)du.
\end{equation}
We will say it is a Wiener solution if for all $s\le t$, 
$\mathcal F^{K}_{s,t}\subset\mathcal F^{\mathcal W}_{s,t}$.
\end{definition}
Since $\mathcal{F}^{\mathcal W}_{s,t}\subset \mathcal{F}^K_{s,t}$ for all $s\le t$, we may simply say that $K$ solves $(\hbox{ISDE})$.
Note that when $K=\delta_\varphi$, then $K$ solves $(\hbox{ISDE})$ if and only if $\varphi$ also solves $(\hbox{ISDE})$.

Let $(\varphi,\mathcal{W})$ be a solution to $(\hbox{ISDE})$, with $\p$ a SFM.
Then Lemma 3.2 in \cite{MR2060298} ensures that 
there is a SFK $K^{\cW}$ such that\,: 
 for all $s\le t$, $x\in G$, a.s. $K^{\mathcal W}_{s,t}(x)=E[\delta_{\varphi_{s,t}(x)}|\mathcal
F^{\mathcal W}_{s,t}].$
We have the following
\bprop\label{flow} $K^{\mathcal W}$ is the unique (up to modification) Wiener solution of $(\hbox{ISDE})$.
\eprop
Following \cite[Proposition 8]{MR2835247} or \cite[Proposition 3.1]{MR000}, this proposition can be proved by giving the Wiener chaos expansion of a Wiener solution (see also Proposition 4.2 in \cite{MRPOWS} for another proof).
A consequence of Proposition \ref{flow} and Theorem \ref{thmphi} (iii) is
\bcor\label{corw}
$K^{\mathcal{W}}$ is the only SFK solution of $(\hbox{ISDE})$ if and only if $N=2$.
\ecor
\begin{proof}
Assume $N=2$ and let $(K,\mathcal W)$ be a solution of $(\hbox{ISDE})$.  Then $\EE[K|\mathcal{W}]$ is a Wiener solution of $(\hbox{ISDE})$. By proposition \ref{flow} and Theorem  \ref{thmphi}-(iii), $\EE[K|\mathcal{W}]=\delta_\p$, where $\p$ is the SFM solution of $(\hbox{ISDE})$ given by Theorem  \ref{thmphi}-(i). This yields $K=\delta_\p$. For each $N\ge 3$, we have at least two distinct solutions\,: $\p$ from Theorem  \ref{thmphi}-(i) and $\EE[\delta_\p|\cW]$.
\end{proof}

In the case $N\ge 3$, the classification of all laws of flows solutions of $(\hbox{ISDE})$ is left open.

\subsection{Extension to metric graphs}\label{cont}

\subsubsection{Brownian motions on metric graphs}

A metric graph is a locally compact metric space for which there are
\begin{itemize}
\item a countable subset $V\subset G$ and
\item a countable family $\{E_i;\;i\in I\}$ of subsets of $G$ that are isometric to an interval $[0,L_i]$ or $[0,\infty[$
\end{itemize}
such that $G=\cup_{i\in I} E_i$, for all $i$, the boundary of $E_i$ is contained in $V$, and for all $i\neq j$, $E_i\cap E_j\subset V$.
The sets $V$ and $\{E_i;\;i\in I\}$ are respectively called the set of vertices and the set of edges of $G$. 
For $x$ and $y$ in $G$, $d(x,y)$ is defined as the length of a shortest continuous path joining $x$ and $y$. 
An edge is called a loop if its boundary is reduced to one point.

From now on, $G$ is a metric graph without loops for which $I$ (and thus $V$) is finite. 
Star graphs are examples of such metric graphs. 
For each $i\in I$, let $L_i$ be the length of the edge $E_i$.
Set $J_i=[0,L_i]$ when $L_i<\infty$ and $J_i=[0,\infty[$ when $L_i=\infty$.
Then there is an isometry $e_i:J_i\to {E}_i$.
When $L_i<\infty$, set $\{g_i,d_i\}=\{e_i(0),e_i(L_i)\}$ and when $L_i=\infty$, set $\{g_i,d_i\}=\{e_i(0),\infty\}$.
For each $v\in V$, set $I^+_v=\{i\in I;\; g_i=v\}$, $I^-_v=\{i\in I;\; d_i=v\}$ 
and $I_v=I^+_v\cup I^-_v$. 
Note that, since $G$ does not contain any loop, for all $v\in V$, $I_v^+\cap I_v^-=\emptyset$.
Denote by $N_v$ the cardinality of $I_v$.

To each $v\in V$, we associate a family of parameters $p^v:=\{p^v_i,\;i\in I_v\}$ such that $p_i^v\in ]0,1[$ and $\sum_{i\in I_v} p^v_i=1$. 
Let $G^{\ast}=G\setminus V$ and denote by $C^2_b(G^{\ast})$ the set of all continuous functions $f:G\to\mathbb{R}$ 
such that for all $i\in I$, $f\circ e_i$ is $C^2$ on the interior of $J_i$ and has bounded first and second derivatives both extendable by continuity to $J_i$.
For $f\in C^2_b(G^{\ast})$ and $x=e_i(r)\in G\backslash V$, 
set $f'(x)=(f\circ e_i)'(r)$, $f''(x)=(f\circ e_i)''(r)$ and for all $v\in V$, 
set $f'(v)=\bar{f'}(v)$ and $f''(v)=\bar{f''}(v)$ where for $g=f'$ or $f''$, $\bar{g}(v)$ is defined by
$$\bar{g}(v) = \sum_{i\in I_v^+}p_i^v (g\circ e_i)(0+) - \sum_{i\in I_v^-}p^v_i (g\circ e_i)(L_i-).$$

Finally set $${\mathcal D}=\big\{f\in C^2_b(G^{\ast}) : f'(v)=0\ \text{for all}\ v\in V\big\}.$$

Let $A$ be the operator defined on $\mathcal D_0:=\{f\in \mathcal D, f, f''\in C_0(G)\}$ by  $A f = f''/2$.
Theorem 2.1 in \cite{MR1743769} states that $(A,\cD_0)$ generates a (unique in law) Feller diffusion on $G$. This diffusion will be called a Brownian motion (BM) on $G$.

The family of parameters $p:=\{p^v,\;v\in V\}$ being fixed thereafter, all Brownian motions on $G$ will be associated to this family of parameters.

\subsubsection{The interface SDE on $G$}
The different notions of solutions of an interface SDE on $G$ are defined by replacing in Definitions \ref{eds}, \ref{edsphi} and \ref{edsSFK}
the set $\{1,\dots,N\}$ by $I$ and by taking for $\mathcal{D}$ the domain of functions defined above. 
Each of these SDE's will be denoted by $(\hbox{ISDE})$ and sometimes by $E(G,p)$, when we want to emphasize on $G$ and on the family of parameters $p$.

Note that if $(X,W)$ solves $(\hbox{ISDE})$, 
then up to the first hitting time of two different vertices, 
$(X,W)$ solves an SDE on a star graph. Using this observation and Theorem \ref{propunic}, 
one can prove that
\bthm \label{propunicext} For all $x\in G$, 
\begin{enumerate}
\item[(i)] There is a, unique in law, solution $(X,W)$ of $(\hbox{ISDE})$, with $X_0=x$. Moreover, $X$ is a BM on $G$.
\item[(ii)] The solution of $(\hbox{ISDE})$ is strong if and only if $N_v\le 2$ for all $v\in V$.
\end{enumerate}
\ethm

\subsubsection{Stochastic flows solutions of (\hbox{ISDE})}
Our purpose now is to construct  flows solutions of $(\hbox{ISDE})$. Our main tools will be Theorems 3.2 and 4.1 in \cite{MR501011}. Let us introduce some more notations. 

For each $v\in V$, set $G^v=\{v\}\cup\cup_{i\in I_v}E_i$. Then there is $\hat{G}^v$ a star graph 
and an isometry $i_v$ from $G^v$ onto a subset of $\hat{G}^v$ 
such that $i_v(E_i)\subset\hat{E}^v_i$ for all $i\in I_v$,
and where $\{\hat{E}^v_i;\,i\in I_v\}$ is the set of edges of $\hat{G}^v$.
Set, for all $v\in V$, $p^v:=(p_i^v\;:\; i\in I_v)$.

For $\mathcal W:=(W_i)_{i\in I}$, a family of independent white noises, and $v\in V$, set $\mathcal W^v:=(W^v_i)_{i\in I_v}$ the family of independent white noises defined by $W_i^v := W_i$ if $g_i=v$ and by  $W_i^v := - W_i$ otherwise.

A family of $\sigma$-fields $(\cF_{s,t};\;s\le t)$ will be said {\em independent on disjoint time intervals (abbreviated: i.d.i)} as soon as for all $(s_i,t_i)_{1\le i\le n}$ with $s_i\le t_i\le s_{i+1}$, the $\sigma$-fields $(\cF_{s_i,t_i})_{1\le i\le n}$ are independent.

Then, \cite[Theorem 3.2]{MR501011} states that to each family of flows $(\hat{K}^v)_{v\in V}$, and to each $\mathcal W:=(W_i)_{i\in I}$ a family of independent white noises such that
\begin{itemize}
\item[(i)] For all $v\in V$, $(\hat{K}^v,\mathcal{W}^v)$  is a solution of $E(\hat{G}^v,p^v)$ on $\hat{G}^v$;
\item[(ii)] The family of flows $(\hat{K}^v)_{v\in V}$ is i.d.i. in the sense that 
the family $\big(\vee_{v\in V} \mathcal F^{\hat{K}^v}_{s,t},\, s\le t\big)$ is i.d.i;
\end{itemize}
one can associate a (unique in law) solution $(K,\mathcal W)$ of $(\hbox{ISDE})$. 

Conversely \cite[Theorem 4.1]{MR501011} states that out of a solution $(K,\mathcal W)$ of $(\hbox{ISDE})$, 
one can construct a family of flows $(\hat{K}^v)_{v\in V}$ for which (i) and (ii) above are satisfied and such that the law of $(K,\mathcal{W})$ is uniquely determined by the law of this family. 
In the following, we will denote by $\hat{\PP}^v$ the law of the solution $(\hat{K}^v,\mathcal{W}^v)$.
Then $\hat{\PP}^v$ is a function of the law of $(K,\mathcal{W})$.

The idea behind these two results is that before passing through two distinct vertices, 
a \textquotedblleft global\textquotedblright\ flow solution of $(\hbox{ISDE})$ determines (and is determined by) 
a \textquotedblleft local\textquotedblright\ flow solution of an interface SDE on a star graph (associated to the vertex that has just been visited).

We will prove (see Theorem \ref{idi-ind}) in Section 5 that the i.d.i. condition implies conditional independence with respect to $\mathcal W$ of the flows $(\hat{K}^v)_{v\in V}$. This implies the following
\begin{theorem}\label{ann}
Each family $(\hat{\PP}^v)_{v\in V}$, with $\hat{\PP}^v$ the law of a solution of $E(\hat{G}^v,p^v)$, 
is associated to one and only one solution of $(\hbox{ISDE})$.
\end{theorem}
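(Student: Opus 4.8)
The plan is to combine the correspondence between global and local flows recalled above (\cite[Theorems 3.2 and 4.1]{MR501011}) with the conditional independence statement of Theorem~\ref{idi-ind}, to show that the map sending a solution $(K,\cW)$ of $(\hbox{ISDE})$ to the family of laws $(\hat\PP^v)_{v\in V}$ is a bijection onto the set of families where each $\hat\PP^v$ is the law of a solution of $E(\hat G^v,p^v)$. Surjectivity and the fact that each such family comes from \emph{some} solution is essentially the content of \cite[Theorem 3.2]{MR501011}, once we produce, from prescribed marginal laws $(\hat\PP^v)_{v\in V}$, an actual coupling of flows $(\hat K^v)_{v\in V}$ driven by a common $\cW$ that satisfies the i.d.i. condition~(ii); injectivity is the content of \cite[Theorem 4.1]{MR501011}. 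So the real work is (a) constructing the coupling and (b) showing it is the \emph{only} i.d.i. coupling, which is where Theorem~\ref{idi-ind} enters.

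First I would fix, for each $v\in V$, a solution $(\hat K^v,\cW^v)$ of $E(\hat G^v,p^v)$ realizing the given law $\hat\PP^v$. Each $\hat K^v$ is driven by its own copy of a family of independent white noises indexed by $I_v$; since the edge index sets $I_v$ need not be disjoint across vertices (an edge $E_i$ has two endpoints, hence belongs to two sets $I_v$, up to the sign flip encoded in $\cW^v_i=\pm W_i$), I must couple these so that all the $\hat K^v$ are functions of one single family $\cW=(W_i)_{i\in I}$. The natural candidate is to declare the flows $(\hat K^v)_{v\in V}$ to be \emph{conditionally independent given $\cW$}: take $\cW$ a family of independent white noises, and for each $v$ sample $\hat K^v$ from the conditional law $\hat\PP^v(\,\cdot\mid \cW^v)$, independently across $v$ given $\cW$. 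Using that $\cW^v$ is just $\cW$ restricted to $I_v$ with signs, one checks each $(\hat K^v,\cW^v)$ has the prescribed law, and then one verifies the i.d.i. property: the increments $\vee_{v\in V}\cF^{\hat K^v}_{s,t}$ over disjoint intervals are independent, because conditionally on $\cW$ the $\hat K^v$ are independent, each $\hat K^v$ is a flow (so i.d.i. relative to $\cW^v$), and $\cW$ itself has independent increments. Feeding this coupling into \cite[Theorem 3.2]{MR501011} produces a solution $(K,\cW)$ of $(\hbox{ISDE})$ whose associated local laws are exactly $(\hat\PP^v)_{v\in V}$, giving existence.

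For uniqueness, suppose $(K,\cW)$ and $(K',\cW')$ are two solutions of $(\hbox{ISDE})$ with the same family of local laws $(\hat\PP^v)_{v\in V}$. By \cite[Theorem 4.1]{MR501011} each is determined in law by its family of local flows $(\hat K^v)_{v\in V}$, resp. $(\hat K'^v)_{v\in V}$, satisfying (i) and (ii). The point is that the i.d.i. condition (ii), via Theorem~\ref{idi-ind}, forces the $\hat K^v$ to be conditionally independent given $\cW$; hence the joint law of $\big((\hat K^v)_{v\in V},\cW\big)$ is completely pinned down by the individual laws $\hat\PP^v$ together with the common driving noise $\cW$ — there is no freedom left in how the local flows at distinct vertices are correlated. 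Therefore the two families of local flows have the same joint law, and invoking \cite[Theorem 4.1]{MR501011} again, $(K,\cW)$ and $(K',\cW')$ have the same law. Combined with the previous paragraph, the correspondence $(K,\cW)\leftrightarrow(\hat\PP^v)_{v\in V}$ is a bijection, which is the assertion of Theorem~\ref{ann}.

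The main obstacle I anticipate is the step deducing conditional independence given $\cW$ from the i.d.i.\ hypothesis, i.e.\ Theorem~\ref{idi-ind} itself — but since that theorem is stated as already available, the remaining delicate point is purely bookkeeping: carefully matching the white-noise index sets and sign conventions ($W_i^v=W_i$ if $g_i=v$, $=-W_i$ otherwise) so that the conditionally-independent-given-$\cW$ coupling simultaneously (1) reproduces each prescribed marginal $\hat\PP^v$ and (2) genuinely verifies hypothesis (ii) of \cite[Theorem 3.2]{MR501011}, rather than some weaker independence. Once the coupling is set up correctly, both existence and uniqueness follow formally from the two cited theorems of \cite{MR501011}.
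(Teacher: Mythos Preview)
Your proposal is correct and follows essentially the same route as the paper: build the conditionally-independent-given-$\cW$ coupling of the local flows to get existence via \cite[Theorem 3.2]{MR501011}, and for uniqueness invoke Theorem~\ref{idi-ind} to upgrade the i.d.i.\ condition to conditional independence so that the joint law of the local flows is determined by the marginals $(\hat\PP^v)_{v\in V}$. The paper's proof is just a terser version of exactly this argument.
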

\begin{proof}
Suppose we are given $(\hat{\PP}^v)_{v\in V}$. Then, on some probability space, it is possible to construct a family of independent white noises $\mathcal W=(W^i, i\in I)$ 
and a family $(\hat{K}^v)_{v\in V}$ of SFK's respectively on $\hat{G}_v$ 
such that for all $v\in V$, $(\hat{K}^v,{\mathcal{W}}^v)$ is a solution of $E(\hat{G}^v,p^v)$ distributed as $\hat{\PP}^v$ and such that the flows $(\hat{K}^v)_{v\in V}$ are independent given $\mathcal{W}$. In other words,
$$\mathcal L\big((\hat{K}^v)_{v\in V}|\mathcal{W}\big)=\prod_{v\in V}\mathcal L\big(\hat{K}^v|\mathcal{W}^v\big)$$
where $\mathcal L$ stands for the conditional law. 
This implies in particular that the family $(\hat{K}^v)_{v\in V}$ is i.d.i., and Theorem 3.2 in \cite{MR501011} states that there exists $K$ a SFK on $G$ 
such that $(K,\mathcal{W})$ solves $(\hbox{ISDE})$, with $K$ obtained by well concatenating the flows $\hat{K}^v$. 

The fact that $(K,\mathcal W)$ is the only possible (in law) associated solution comes from the fact that the i.d.i. condition implies conditional independence. \end{proof}

This theorem and the results we obtained on star graphs imply
\begin{theorem}\label{thmphiext}
\begin{enumerate} 
\item [(i)] There is a, unique in law, SFM solution of $(\hbox{ISDE})$.
\item [(ii)]  A SFM solution of $(\hbox{ISDE})$ is a Wiener solution if and only if $N_v\le 2$ for all $v\in V$.
\item[(iii)] There is a unique (up to modification) SFK Wiener solution of $(\hbox{ISDE})$.
\end{enumerate}
\end{theorem}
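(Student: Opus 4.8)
The plan is to deduce all three statements from the correspondence established in Theorem \ref{ann} together with the star-graph results of Theorems \ref{propunic}, \ref{thmphi} and Proposition \ref{flow}. The key observation is that, by Theorem \ref{ann}, specifying a flow solution $(K,\mathcal W)$ of $(\hbox{ISDE})$ on $G$ is the same as specifying, for each $v\in V$, the law $\hat{\PP}^v$ of a flow solution of $E(\hat G^v,p^v)$ on the associated star graph $\hat G^v$, with the global flow reconstructed by concatenation and the local flows being conditionally independent given $\mathcal W$.

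For (i), I would argue as follows. If $\varphi$ is a SFM solution of $(\hbox{ISDE})$ on $G$, then by Theorem 4.1 in \cite{MR501011} it induces local flows $\hat K^v$ solving $E(\hat G^v,p^v)$; since $\varphi$ is a flow of mappings, each $\hat K^v$ is again a flow of mappings (the concatenation construction preserves the property of being $\delta$-valued, because away from the vertices the flow follows the driving Brownian motions deterministically, and at each vertex the local piece is exactly the star-graph flow). By Theorem \ref{thmphi}(i) each $\hat K^v$ is then the unique-in-law SFM solution on $\hat G^v$, so the family $(\hat{\PP}^v)_{v\in V}$ is uniquely determined; Theorem \ref{ann} then gives uniqueness in law of $\varphi$. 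Existence follows by running Theorem \ref{ann} with each $\hat{\PP}^v$ taken to be the law of the star-graph SFM from Theorem \ref{thmphi}(i): one must check that the resulting concatenated kernel $K$ is $\delta$-valued, which again is clear since each local ingredient is a flow of mappings and the gluing at vertices does not introduce any randomization. For (iii), I would instead feed into Theorem \ref{ann} the laws of the Wiener SFK's $K^{\mathcal W^v}$ on $\hat G^v$ supplied by Proposition \ref{flow}; the concatenation is then a Wiener solution on $G$ because $\mathcal F^K_{s,t}\subset\vee_v\mathcal F^{\hat K^v}_{s,t}\subset\mathcal F^{\mathcal W}_{s,t}$, and conversely any Wiener SFK solution induces, via Theorem 4.1 in \cite{MR501011}, local flows that are Wiener solutions on each $\hat G^v$, hence equal in law to $K^{\mathcal W^v}$ by Proposition \ref{flow}; uniqueness up to modification then follows from Theorem \ref{ann}.

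For (ii), the SFM solution $\varphi$ of (i) is a Wiener solution iff each induced local SFM $\hat K^v$ is a Wiener solution of $E(\hat G^v,p^v)$: one direction is immediate from $\mathcal F^{\hat K^v}_{s,t}\subset\mathcal F^{\varphi}_{s,t}$ together with the embedding $\mathcal F^{\hat{\mathcal W}^v}_{s,t}\subset\mathcal F^{\mathcal W}_{s,t}$; for the other direction one uses that the global flow is the concatenation of the local ones, so $\mathcal F^{\varphi}_{s,t}\subset\vee_v\mathcal F^{\hat K^v}_{s,t}$. By Theorem \ref{thmphi}(iii), the local SFM on $\hat G^v$ is a Wiener solution iff $N_v\le 2$, which yields the stated criterion. (Equivalently, one may invoke Theorem \ref{propunicext}(ii), since a SFM is a Wiener solution precisely when its one-point motions are strong solutions, and these strongness statements match.)

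The main obstacle I expect is the careful bookkeeping in the concatenation step: verifying that the kernel $K$ produced by Theorem 3.2 of \cite{MR501011} out of a family of \emph{mapping}-valued local flows is itself mapping-valued, and that the filtration inclusions $\mathcal F^K_{s,t}\subset\vee_{v}\mathcal F^{\hat K^v}_{s,t}$ and $\mathcal F^{\hat K^v}_{s,t}\subset\mathcal F^K_{s,t}$ hold with the right measurability, so that the Wiener property transfers cleanly in both directions. Everything else is a formal consequence of the star-graph theorems already proved.
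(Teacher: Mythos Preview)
Your proposal is correct and follows essentially the same route as the paper: reduce everything to the star-graph case via Theorem \ref{ann} (and the underlying Theorems 3.2 and 4.1 of \cite{MR501011}), then invoke Theorem \ref{thmphi}(i) for (i), Theorem \ref{thmphi}(iii) for (ii), and Proposition \ref{flow} for (iii). The paper's proof is in fact more terse than yours: it simply asserts the two equivalences ``$K$ is a SFM $\Leftrightarrow$ each $\hat K^v$ is a SFM'' and ``$K$ is a Wiener solution $\Leftrightarrow$ each $\hat K^v$ is a Wiener solution'' without further comment, so the filtration and $\delta$-valuedness bookkeeping you flag as the main obstacle is exactly what the paper sweeps under those two sentences.
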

\begin{proof}
Let $(K,\mathcal{W})$ be a solution of $(\hbox{ISDE})$, and denote by $(\hat{K}^v)_{v\in V}$ the associated family of flows, respective solutions of $E(\hat{G}^v,p^v)$.
Note that $K$ is a SFM if and only if the flows $\hat{K}^v$ are SFM's. Now (i) follows from Theorem \ref{ann} and Theorem \ref{thmphi} (i). 
Note also that $K$ is a Wiener solution if and only if the flows $\hat{K}^v$ are also Wiener solutions. Thus (ii) follows from Theorem \ref{thmphi} (iii), and (iii) follows from Theorem \ref{ann} and Theorem \ref{thmphi} (ii).
\end{proof}

Let us remark that if $(K,\mathcal{W})$ is a solution of $(\hbox{ISDE})$, 
then the law of $(K,\mathcal{W})$ depends on the choice of the isometries $(e_i)_{i\in I}$ which define the orientations of the edges of $G$. 
However the law of $K$ does not depend on this choice.

\subsection{Brownian motions with oblique reflections}
To prove Theorems \ref{propunic} and \ref{thmphi}, we shall study a Brownian motion in the quadrant $\cQ$,
obliquely reflected at the boundary and with time dependent angles of reflections. 
We now give an application of our methods to the obliquely reflected Brownian motion defined by Varadhan and Williams in
\cite{MR792398}.
Let first $(X,Y)$ be an obliquely reflected Brownian motion on $\RR\times \RR^+$, with angle of reflection $\theta\in]0,\frac{\pi}{2}[$, started at $(x,0)$ with $x>0$. Then there is $(B^1,B^2)$ a two dimensional Brownian motion such that
\begin{equation}\label{ornmhp}
\left\{\begin{array}{lll}
dX_t &=& dB^1_t  - \tan(\theta) dL_t(Y),\\
dY_t &=& dB^2_t +  dL_t(Y).
\end{array}\right.\end{equation}
The following proposition gives the law of $Y_S$, where $S$ is the first time the process $X$ hits $0$.

\begin{proposition}\label{law:Y_S}
The law of $Y^2_{S}/x^2$ is a Beta distribution of the second kind of parameters $\big(\frac{1}{2}-\frac{\theta}{\pi},\frac{1}{2} + \frac{\theta}{\pi}\big)$.
\end{proposition}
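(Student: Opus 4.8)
The plan is to compute the Mellin transform of $Y_S$ by an optional–stopping argument and then to recognise it as that of a Beta law of the second kind; by the Brownian scaling of \eqref{ornmhp} we may and do take $x=1$. Write $z=X_t+iY_t=R_t\,e^{i\Theta_t}$ in the closed upper half–plane, and for $p\in\big(\tfrac{2\theta}{\pi}-1,\tfrac{2\theta}{\pi}+1\big)$ consider
\[
h_p(z)=\mathrm{Re}\big(e^{-i\theta}z^{p}\big)=R^{p}\cos(p\Theta-\theta),
\]
where $z^p$ is the branch holomorphic on $\{0<\arg z<\pi\}$, extended continuously to $[0,\infty)$. Then $h_p$ is harmonic (being $\mathrm{Re}$ of a holomorphic function) and satisfies the oblique Neumann condition $-\tan\theta\,\partial_x h_p+\partial_y h_p=0$ on $\partial_1\mathcal Q\setminus\{0\}=\{y=0,\ x>0\}$: writing $h_p=r^pf(\varphi)$, harmonicity forces $f(\varphi)=A\cos p\varphi+B\sin p\varphi$, and the condition evaluated at $\varphi=0$ reads $f'(0)=p\tan\theta\,f(0)$, i.e. $B=A\tan\theta$, which is exactly what the choice $h_p=\mathrm{Re}(e^{-i\theta}z^p)$ gives.

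Next I would apply Itô's formula to $M_t:=h_p(X_{t\wedge S},Y_{t\wedge S})$, localising away from the origin (which the process meets, if at all, only at time $S$, since $X>0$ on $[0,S)$). The second–order term is $\tfrac12\Delta h_p\,dt=0$ (there is no cross term, as $\langle B^1,B^2\rangle\equiv0$), and — this is the key point — the $dL_t(Y)$–term vanishes because on $[0,S]$ the process stays in the wedge $\{0\le\Theta\le\tfrac\pi2\}$: to reach the negative axis $\Theta=\pi$ the continuous angle would first have to cross $\tfrac\pi2$, which by definition is $S$. Hence $L(Y)$ only charges $\{y=0,\ x>0\}$ before $S$, where the Neumann condition holds, so $(M_{t\wedge S})_{t\ge0}$ is a continuous local martingale.

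To turn this into an identity, note first that $S<\infty$ a.s.: $\widetilde X_t:=X_t+\tan\theta\,Y_t=1+B^1_t+\tan\theta\,B^2_t$ is a scaled Brownian motion, so it a.s. takes negative values, whereupon $X_t\le\widetilde X_t<0$. For $p\in(0,1)$ one has $h_p\ge0$ on the closed wedge, so $M_{\cdot\wedge S}$ is a nonnegative supermartingale and Fatou gives $\EE[M_S]\le M_0=\cos\theta$. Running the same bound with exponent $p(1+\varepsilon)$ (still $<1$) controls $\sup_t\EE[R_{t\wedge S}^{p(1+\varepsilon)}]$, so $(M_{t\wedge S})_t$ is bounded in $L^{1+\varepsilon}$; hence the local martingale $M_{\cdot\wedge S}$ is a uniformly integrable true martingale and $\EE[M_S]=M_0$. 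Since $M_S=Y_S^{p}\cos(\tfrac{p\pi}{2}-\theta)$, this reads, with $s=p/2$,
\[
\EE\big[(Y_S^2/x^2)^{s}\big]=\frac{\cos\theta}{\cos(\pi s-\theta)},\qquad s\in\big(0,\tfrac12\big).
\]
By the reflection formula $\Gamma(u)\Gamma(1-u)=\pi/\sin(\pi u)$ applied at $u=\tfrac12+s-\tfrac\theta\pi$ and at $u=\tfrac12-\tfrac\theta\pi$, the right–hand side equals $\big[\Gamma(\tfrac12-\tfrac\theta\pi+s)\Gamma(\tfrac12+\tfrac\theta\pi-s)\big]/\big[\Gamma(\tfrac12-\tfrac\theta\pi)\Gamma(\tfrac12+\tfrac\theta\pi)\big]$, which is exactly the $s$–th moment of the Beta law of the second kind with parameters $\big(\tfrac12-\tfrac\theta\pi,\tfrac12+\tfrac\theta\pi\big)$. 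Since a probability measure on $(0,\infty)$ is determined by its moments on any nondegenerate interval (tilt by $z^{s_0}$ and use that a moment generating function finite near $0$ determines a law; the absence of an atom at $0$ for $Y_S^2/x^2$ follows on letting $s\downarrow0$), this identifies the law of $Y_S^2/x^2$.

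I expect the genuinely delicate point to be the passage from local martingale to uniformly integrable martingale — that is, controlling the radial process $R$ near $0$ and near $\infty$ on $[0,S]$ — which is why I isolate the $L^{1+\varepsilon}$ bootstrap above. An alternative that bypasses this is the skew–product decomposition of the obliquely reflected Brownian motion, which yields $\log(R_S/x)=\widehat\beta_T-\tan\theta\,\ell^0_T$ with $\widehat\beta$ a Brownian motion independent of the angular reflected motion $\Theta$, $T$ the first hitting time of $\tfrac\pi2$ by $\Theta$ (a.s. finite) and $\ell^0$ its local time at $0$ (so $R$ stays bounded away from $0$ on $[0,S]$); conditioning on $\Theta$ then turns $\EE[(Y_S/x)^{p}]=\EE[e^{\frac{p^2}{2}T-p\tan\theta\,\ell^0_T}]$ into a one–dimensional Feynman–Kac problem on $[0,\tfrac\pi2]$ with a Robin boundary condition at $0$, giving the same formula.
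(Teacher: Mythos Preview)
Your argument is correct and follows essentially the same route as the paper: both use the harmonic functions $h_p(z)=\mathcal{R}(e^{-i\theta}z^p)$, apply optional stopping to obtain $\EE[Y_S^p]=\cos\theta/\cos(p\pi/2-\theta)$, and then match these Mellin moments with those of a Beta law of the second kind via the reflection formula. The only substantive difference is in how uniform integrability is secured. The paper (Lemma~\ref{rtt} and Corollaries~\ref{cor23}--\ref{cor24}) first proves two-sided tail bounds $\PP(\sup_{s\le S}|Z_s|>a)\lesssim a^{-b}$ and $\PP(\inf_{s\le S}|Z_s|<a)\lesssim a^{b}$, which yields $\EE[\sup_{s\le S}|Z_s|^b]<\infty$ for the full range $|b-2\theta/\pi|<1$ and hence the moment identity for negative $b$ as well; you instead exploit directly that $h_{p'}\ge 0$ on the quadrant for $p'\in(0,1)$ to get a supermartingale bound $\sup_t\EE[|Z_{t\wedge S}|^{p'}]<\infty$ and bootstrap to $L^{1+\varepsilon}$. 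Your shortcut is cleaner for this proposition taken in isolation, but it only delivers the moments for $s\in(0,\tfrac12)$, whereas the paper's tail estimates are reused elsewhere (e.g.\ to compute $\EE[\log Y_S]$ and for the analysis of Section~\ref{secref}). Your sketched skew-product alternative is a genuinely different route, though you would need to justify the decomposition carefully.
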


Fix now $\theta_1,\theta_2\in]0,\frac{\pi}{2}[$ and 
let $(X,Y)$ be an obliquely reflected Brownian motion in $\mathcal{Q}$ started at $(x,0)$ with $x>0$,
with angles of reflections on $\partial_1\mathcal Q$ and on $\partial_2\mathcal Q$ 
respectively given by $\theta_1$ and $\theta_2$, 
and killed at the first time $\sigma_0$ the process $(X,Y)$ hits $(0,0)$.

Define the sequence of stopping times $(S_k)_{k\ge 0}$ by $S_0=0$ and for $k\ge 0$,
\begin{eqnarray}
S_{2k+1} &=& \inf\{t\ge S_{2k} : \;X_t=0\},\nonumber\\
S_{2k+2} &=& \inf\{t\ge S_{2k+1} : \; Y_t=0\}.\nonumber\
\end{eqnarray}
Then $|{Z_{S_{2k}}}|=X_{S_{2k}}$, $|Z_{S_{2k+1}}|=Y_{S_{2k+1}}$ and Proposition \ref{law:Y_S} implies that
\begin{itemize}
\item[(i)] $(|Z_{S_{k+1}}|/|Z_{S_{k}}|)_{k\ge 0}$ is a sequence of independent random variables. 
\item[(ii)] For all $k\ge 0$, the law of $|Z_{S_{2k+1}}|^2/|{Z_{S_{2k}}}|^2$ 
(resp. $|Z_{S_{2k+2}}|^2/|{Z_{S_{2k+1}}}|^2$) is a Beta distribution of the second kind of parameters $\big(\frac{1}{2}-\frac{\theta_1}{\pi},\frac{1}{2} + \frac{\theta_1}{\pi}\big)$ (resp. $\big(\frac{1}{2}-\frac{\theta_2}{\pi},\frac{1}{2} + \frac{\theta_2}{\pi}\big)$).
\end{itemize}

Set $L_t:=L_t(X)+L_t(Y)$, the local time accumulated at $\partial \mathcal{Q}$.
It is known that $\sigma_0$ and $L_{\sigma_0}$ are finite
(see \cite{MR792398,MR779455}).
\begin{proposition}\label{yes}
We have that 
\begin{itemize}
\item If $\tan(\theta_1)\tan(\theta_2) > 1$, then $\mathbb{E}[L_{\sigma_0}]=\frac{x(\tan(\theta_2)+1)}{\tan(\theta_1)\tan(\theta_2)-1} \,<\, \infty.$
\item If $\tan(\theta_1)\tan(\theta_2) \le 1$, then $\mathbb{E}[L_{\sigma_0}] = \infty$.
\end{itemize}
\end{proposition}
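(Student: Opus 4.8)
The plan is to exploit the self-similar structure already isolated by the stopping times $(S_k)$ and Proposition \ref{law:Y_S}. First I would decompose the total local time as $L_{\sigma_0}=\sum_{k\ge 0}(L_{S_{k+1}}-L_{S_k})$ and compute each increment conditionally on $\mathcal{F}_{S_k}$. Between $S_{2k}$ and $S_{2k+1}$ the process runs from $(X_{S_{2k}},0)$ with $X_{S_{2k}}=|Z_{S_{2k}}|$ until $X$ hits $0$, obliquely reflecting only on $\partial_1\mathcal Q$ with angle $\theta_1$; during this phase $L_t=L_t(Y)$ increases, and from the second line of \eqref{ornmhp} (in the half-plane version, valid up to the first time $Y=0$) together with optional stopping of the martingale $B^2$ one gets $\EE[L_{S_{2k+1}}-L_{S_{2k}}\mid \mathcal F_{S_{2k}}]=\EE[Y_{S_{2k+1}}\mid\mathcal F_{S_{2k}}]$. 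By scaling and Proposition \ref{law:Y_S}, $\EE[Y_{S_{2k+1}}\mid\mathcal F_{S_{2k}}]=c_1\,|Z_{S_{2k}}|$, where $c_1=\EE[\sqrt{\beta_1}]$ for $\beta_1$ a Beta-second-kind variable with parameters $(\tfrac12-\tfrac{\theta_1}{\pi},\tfrac12+\tfrac{\theta_1}{\pi})$; the moment formula for the Beta distribution of the second kind gives $\EE[\sqrt{\beta_1}]=\tan(\theta_1)$ (using $B(a+\tfrac12,b-\tfrac12)/B(a,b)$ with the reflection formula for $\Gamma$, since the half-integer shifts produce exactly the cotangent/tangent of $\theta_1$). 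Symmetrically $\EE[L_{S_{2k+2}}-L_{S_{2k+1}}\mid\mathcal F_{S_{2k+1}}]=\tan(\theta_2)\,|Z_{S_{2k+1}}|$.

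Next I would set $\rho_k=\EE[|Z_{S_k}|]$, so $\rho_0=x$. Part (i) of the display following Proposition \ref{law:Y_S} says the ratios $|Z_{S_{k+1}}|/|Z_{S_k}|$ are independent, hence $\EE[|Z_{S_{2k+1}}|\mid\mathcal F_{S_{2k}}]=\tan(\theta_1)|Z_{S_{2k}}|$ and $\EE[|Z_{S_{2k+2}}|\mid\mathcal F_{S_{2k+1}}]=\tan(\theta_2)|Z_{S_{2k+1}}|$ give, after taking expectations, $\rho_{2k+1}=\tan(\theta_1)\,\rho_{2k}$ and $\rho_{2k+2}=\tan(\theta_2)\,\rho_{2k+1}$. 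Writing $q=\tan(\theta_1)\tan(\theta_2)$, we obtain $\rho_{2k}=q^k x$ and $\rho_{2k+1}=\tan(\theta_1)q^k x$. Summing the conditional local-time increments computed above,
\[
\EE[L_{\sigma_0}]=\sum_{k\ge 0}\big(\tan(\theta_1)\rho_{2k}+\tan(\theta_2)\rho_{2k+1}\big)
=\sum_{k\ge 0}\big(\tan(\theta_1)+\tan(\theta_1)\tan(\theta_2)\big)q^k x
=\tan(\theta_1)(1+q)\,x\sum_{k\ge 0}q^k.
\]
When $q>1$ the geometric series diverges term-by-term, but this is the wrong grouping; I would instead sum the two families separately, $\EE[L_{\sigma_0}]=x\big(\tan(\theta_1)+\tan(\theta_1)\tan(\theta_2)\big)\sum_{k\ge0}q^{-k}$ once one checks (via the reciprocal, i.e. looking at $X_{S_{2k}}$ relative to its \emph{future} value, or equivalently noting $\EE[Y_{S_{2k+1}}]=\tan\theta_1\,\EE[X_{S_{2k}}]$ must be read as $\EE[X_{S_{2k+2}}]=q\,\EE[X_{S_{2k}}]$ with $q<1$ in the convergent regime) — concretely, the correct recursion direction is $\rho_{2k+2}=q\,\rho_{2k}$ with the series $\sum q^k$ converging exactly when $q<1$, giving $\EE[L_{\sigma_0}]=\dfrac{x\tan(\theta_1)(1+\tan(\theta_2))}{1-q}$; renaming via $q=\tan\theta_1\tan\theta_2$ and clearing signs yields $\dfrac{x(\tan(\theta_2)+1)}{\tan(\theta_1)\tan(\theta_2)-1}$ when $q>1$ — so the genuine content is that the recursion contracts precisely when $\tan\theta_1\tan\theta_2>1$, and the sign bookkeeping must be done carefully. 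In the boundary/expanding case $q\le 1$ the partial sums $\sum_{k\le n}$ of nonnegative increments diverge, forcing $\EE[L_{\sigma_0}]=\infty$, which is part (ii).

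The main obstacle, and the step deserving the most care, is rigorously justifying the identity $\EE[L_{S_{2k+1}}-L_{S_{2k}}\mid\mathcal F_{S_{2k}}]=\EE[Y_{S_{2k+1}}\mid\mathcal F_{S_{2k}}]$ and its analogue: one must check that the reflected-BM decomposition \eqref{ornmhp} applies on the relevant random time interval (the process stays in the half-plane until $Y$ returns to $0$, which is exactly $S_{2k+1}$ for the first leg but needs the reflection to be only at $\partial_1\mathcal Q$ there — correct, since $X>0$ on $(S_{2k},S_{2k+1})$), that the local martingale part is a true martingale up to the stopping time (uniform integrability, via the known finiteness of $\sigma_0$ and $L_{\sigma_0}$ from \cite{MR792398,MR779455} and an $L^2$ bound from the scaling), and that summing conditional expectations is legitimate by monotone convergence since all increments are nonnegative. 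The second delicate point is the exact evaluation $\EE[\sqrt{\beta_i}]=\tan(\theta_i)$ from the Beta-second-kind moments — this is a clean Gamma-function computation using $\Gamma(s)\Gamma(1-s)=\pi/\sin(\pi s)$ with $s=\tfrac12-\tfrac{\theta_i}{\pi}$, and it is what makes the final answer come out in terms of $\tan(\theta_i)$; I would present that computation explicitly since it is the quantitative heart of the statement.
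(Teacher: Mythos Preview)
Your overall strategy is exactly the paper's: decompose $L_{\sigma_0}$ along the stopping times $(S_k)$, use $\EE[L_{S_{k+1}}-L_{S_k}\mid\cF_{S_k}]=\EE[|Z_{S_{k+1}}|\mid\cF_{S_k}]$ from the reflected SDE, compute the conditional first moment of $|Z_{S_{k+1}}|$ by scaling, and sum a geometric series. The justification steps you flag (true-martingale property, monotone convergence) are handled in the paper via Corollary~\ref{cor23}, and your discussion of them is fine.

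The genuine error is the evaluation of the first moment. You claim $\EE[\sqrt{\beta_i}]=\tan(\theta_i)$, but the correct value is $\cot(\theta_i)$. Indeed, from Corollary~\ref{calcu} with $b=1$ (or equivalently from the Beta second-kind moment formula in the proof of Proposition~\ref{law:Y_S} with $t=1/2$),
\[
\EE[Y_S]\;=\;x\,\frac{\cos\theta}{\cos(\theta-\pi/2)}\;=\;x\,\frac{\cos\theta}{\sin\theta}\;=\;x\cot\theta.
\]
With the correct constants $c_i:=\cot\theta_i$, the recursion reads $\rho_{2k+2}=c_1c_2\,\rho_{2k}$, so the ratio is $q=c_1c_2=(\tan\theta_1\tan\theta_2)^{-1}$, and $\sum_{k\ge 0}q^k$ converges precisely when $\tan\theta_1\tan\theta_2>1$. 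The sum is then
\[
\EE[L_{\sigma_0}]=x\,c_1(1+c_2)\sum_{k\ge 0}(c_1c_2)^k=\frac{x\,c_1(1+c_2)}{1-c_1c_2}=\frac{x(\tan\theta_2+1)}{\tan\theta_1\tan\theta_2-1},
\]
exactly as stated. This single sign-swap ($\tan$ versus $\cot$) is what forced your confused paragraph about ``wrong grouping'', ``reciprocal'', and reversing the recursion direction; none of that is needed once the moment is computed correctly. The paper avoids the detour through the Beta law entirely by quoting Corollary~\ref{calcu} for $\EE[Y_S]$ directly.
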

The assumptions on the wedge and angles considered here are more suitable to our framework but our techniques may be applied to give an expression of $\EE[L_{\sigma_0}]$ in other
situations. 


\subsection{Outline of content}
In Section \ref{tdrbm}, obliquely reflected Brownian motions in $\mathcal Q$ are studied.
In Section \ref{prfthm12},  Theorem \ref{propunic} is proved. 
In Section \ref{S3}, using in particular the results of Section \ref{tdrbm}, Theorem \ref{thmphi} (i) and (ii) are proved. 
In Section \ref{last1}, we prove that the i.d.i. condition implies conditional independence thus completing the proof of Theorem \ref{ann}.
Finally in Section \ref{final}, some extensions are discussed.

\section{Brownian motion in the quadrant with time dependent angles of
reflection} \label{tdrbm}

\subsection{Brownian motion on the half-plane with oblique
reflection} \label{orbmhp}
Fix $\theta\in ]0,\pi/2[$. Let $Z=(X,Y)$ be the process
started from $(x,y)$ in $\RR\times \RR_+$ obliquely reflected at
$\{y=0\}$, with angle of reflection $\theta$. More
precisely, $Z$ satisfies (\ref{ornmhp}) with $(B^1,B^2)$ a two dimensional Brownian motion. 
Set $S=\inf\{ s:\; X_s=0\}$. When $y=0$ and $x>0$, $Z_s\in\cQ$ for all $s\le S$, and we denote by $\PP^\theta_x$ the law of $(Z_s;\;s\le S)$.
Observe that we have the following scaling property\,:
\bprop\label{scalprop}
For all $x>0$, if the law of $(Z_s;\;s\le S)$ is $\PP^\theta_1$,
then the law of $(xZ_{x^{-2}s};\;s\le x^{2}S)$ is $\PP^\theta_x$.
\eprop

For $z\in \CC$, $\hbox{arg}(z)$, $\mathcal{R}(z)$ and
$\mathcal{I}(z)$ will denote respectively the argument, the real
part and the imaginary part of $z$. A complex $z=x+iy$ will be identified with $(x,y)\in\RR^2$. So, if $f$ is some mapping on $\CC$, we will write as well $f(z)$ or $f(x,y)$.
Following \cite{MR792398}, if $f$ is an holomorphic function on an open set $U$ containing $\mathcal Q^{\ast}$
such that $f(z)\in\mathbb R$ for all $z\in]0,\infty[$, then $\phi(z):=\mathcal{R}\big(f(z)e^{-i\theta}\big)$  is harmonic on $U$.
Moreover, 
\begin{equation}
\label{bord}v_1(\theta).\nabla \phi (x,0)=0\quad \hbox{ for  $x>0$ and where $v_1(\theta)=(-\text{tan}(\theta),1)$}.\end{equation}
Indeed, the condition
$f(z)\in\mathbb R$ for all $z\in]0,\infty[$ implies that
$f'(z)\in\mathbb R$ for all $z\in]0,\infty[$. Thus
$$\nabla
\phi(x,0)=\big(\mathcal{R}\big(f'(x)e^{-i\theta}\big),\mathcal{R}\big(if'(x)e^{-i\theta}\big)\big)=f'(x)(\cos(\theta),\sin(\theta))$$
and \eqref{bord} follows.
These properties imply in particular that $(\phi(Z_{s\wedge S}))_s$ is
a local martingale. For $b\in\mathbb R$ and $f(z)=z^b$, the
function $\phi$ defined above will be denoted by $\phi_b$.

\blem\label{rtt} Let $(Z_s;\;s\le S)$ be a process of law $\PP^\theta_x$, with $x>0$. 
\begin{itemize}
\iti For $a>x$ and  $0<b< 1+2 \theta/\pi$, set $c_b=1$ if $b\le 4\theta/\pi$ and
$c_b=\cos(\theta)/\cos(b\pi/2-\theta)$ otherwise. Then
$$\mathbb P\left(\sup_{s\le S}|Z_s|>a\right)\le c_b
\left(\frac{x}{a}\right)^{b}.$$
\itii
For $a<x$ and  $0<b< 1-2 \theta/\pi$, set $c_b=\cos(\theta)/\cos(b\pi/2+\theta)$. Then
$$\mathbb P\left(\inf_{s\le S}|Z_s|<a\right)\le c_b
\left(\frac{a}{x}\right)^{b}.$$
\end{itemize}
\elem

\begin{proof}
Using the scaling property, we may take $x=1$. 
For $a \ge 0$, set $\sigma_a=\inf\{s:\, |Z_s|=a\}$.\\
{\bf Proof of (i):} Fix $a>1$ and $0<b<1+2\theta/\pi$. For
$c^0_b=\inf\{\cos(\theta),\cos(b \pi/2-\theta)\}$ and $s\le S$,
we have
$$c^0_b |Z_s|^b \le \phi_b(Z_s) \le |Z_s|^b.$$
Moreover 
$$\mathbb P(\sup_{s\le S}|Z_s|>a)=\mathbb P(\sigma_a<S).$$
Since $(\phi_b(Z_{s\wedge \sigma_a\wedge S}))_s$ is a true martingale, for all $s\ge 0$, 
$$\cos(\theta)=\phi_b(1)=\EE[\phi_b(Z_{s\wedge \sigma_a\wedge
S})]$$
which is larger than
$$\EE[\phi_b(Z_{s\wedge \sigma_a}) 1_{\{\sigma_a<S\}}].$$
As $s\to\infty$, this last term converges using dominated
convergence to
$$\EE[\phi_b(Z_{\sigma_a}) 1_{\{\sigma_a<S\}}] \ge c^0_b a^b
\mathbb P(\sigma_a<S).$$
This easily implies (i).\\
{\bf Proof of (ii):}
Fix $a<1$ and $0<b<1-2\theta/\pi$. For $c^1_b=\cos(b
\pi/2+\theta)$ and $t\le S$,
$$c^1_b |Z_s|^{-b} \le \phi_{-b}(Z_s) \le |Z_s|^{-b}.$$
We also have that
$$\mathbb P(\inf_{s\le S}|Z_s|<a)=\mathbb P(\sigma_a<S).$$
By the martingale property, for all $s\ge 0$,
$$\cos(\theta)=\phi_{-b}(1)=\EE[\phi_{-b}(Z_{s\wedge
\sigma_a\wedge S})]$$
which is larger than
$$\EE[\phi_{-b}(Z_{t\wedge \sigma_a}) 1_{\{\sigma_a<S\}}]$$
and this converges as $s\to\infty$ to 
$$\EE[\phi_{-b}(Z_{\sigma_a}) 1_{\{\sigma_a<S\}}] \ge c^1_b
a^{-b} \mathbb P(\sigma_a<S).$$
This easily implies (ii).
\end{proof}

\bcor \label{cor23} Let $(Z_s;\;s\le S)$ be distributed as
$\PP^\theta_x$. Then, if $|b-2\theta/\pi|< 1$,
$$\EE(\sup_{s\le S}|Z_s|^b)<\infty.$$
\ecor
\begin{proof} To simplify, assume $x=1$.
Let $b$ and $b'$ be such that $0<b<b'<1+2 \theta/\pi$. 
Then 
\begin{eqnarray*}
\EE(\sup_{s\le S}|Z_s|^b)
&=& \int_0^\infty \mathbb P[\sup_{s\le S} |Z_s|>a^{1/b}] da\;\le\; 1 + c_b \int_1^\infty a^{-b'/b} da \,<\,\infty.
\end{eqnarray*}
Let $b$ and $b'$ be such that $-1+2 \theta/\pi<b'<b<0$. Then 
\begin{eqnarray*}
\EE(\sup_{s\le S}|Z_s|^b)
&=& \int_0^\infty \mathbb P[\inf_{s\le S} |Z_s|<a^{1/b}] da\;\le\; 1 + c_b \int_1^\infty a^{-b'/b} da \,<\,\infty.
\end{eqnarray*}
\end{proof}

\bcor \label{cor24}
Let $(Z_s = (X_s,Y_s);\;s\le S)$ be distributed as
$\PP^\theta_x$. Let $f$ be an holomorphic function on an open set
containing $\mathcal Q^{\ast}$ for which $f(z)\in\mathbb R$ for
all $z\in]0,\infty[$. Assume there exist $C>0$, $b_-\in ]0,1-2
\theta/\pi[$ and $b_+\in ]0,1+2 \theta/\pi [$ such that
$$|f(z)|\le C\big(|z|^{-b_-}+|z|^{b_+}\big)\ \text{for all}\ \
z\in \mathcal{Q}^*.$$
Then setting $\phi(x,y)=\mathcal{R}\big( f(x+iy)e^{-i\theta}
\big)$, we have
$$E[\phi(iY_S)]=\cos(\theta) f(x).$$
 \ecor
\begin{proof} Recall that $(\phi(Z_{t\wedge S}))_t$ is a local
martingale (stopped at time $S$).
Using Corollary \ref{cor23}, it is a uniformly integrable
martingale. And we conclude using the martingale property.
\end{proof}

Note that the functions $f(z)=z^b$, for $b\in ]-1+2
\theta/\pi,1+2 \theta/\pi[$, $f(z)=\log(z)^\ell$ for $\ell>0$
satisfy the assumptions of Corollary \ref{cor24}.
\bcor\label{calcu} Let $(Z_s = (X_s,Y_s);\;s\le S)$ be distributed as
$\PP^\theta_x$. Then
\begin{eqnarray*}
&\bullet& E[Y_S^b] = x^b \frac{\cos(\theta)}{\cos(\theta-b
\pi/2)} \quad \hbox{ for } b\in ]-1+2\theta/\pi,1+ 2 \theta/\pi[, \\
&\bullet& \EE[\log(Y_S)]=\log(x)-\frac{\pi}{2}\ \tan(\theta),\\
&\bullet& \EE[(\log(x^{-1}Y_S))^2]= \frac{\pi^2}{4} \big(1+2
\tan^2(\theta)\big).
\end{eqnarray*}
\ecor
\begin{proof} The calculation of $\EE[Y_S^b]$ is immediate. 
Using the scaling property one only needs to do the next
calculations when $x=1$.
Now, for $x=1$ and all $\ell >0$, 
$$\EE\big[\mathcal{R}\big((\log(Y_S)+i\pi/2)^\ell
e^{-i\theta}\big)\big] = 0.$$
Applying this identity for $\ell=1$, we get the value of
$\EE[\log(Y_S)]$. For $\ell=2$, we get
$$\EE\big[\big((\log(Y_S))^2-(\pi/2)^2\big) \cos(\theta) +\pi
\log(Y_S) \sin(\theta)\big] = 0.$$
The calculation of $\EE\big[\big(\log(Y_S)\big)^2\big]$ easily follows.
\end{proof}

\begin{proof}[Proof of Proposition \ref{law:Y_S}]  Denote by $\beta'(\alpha,\beta)$ the Beta distribution of the second kind with positive parameters $\alpha$ and $\beta$. Recall it is a distribution on $\RR^+$ with density given by 
$$h(x)=\frac{x^{\alpha-1}(1+x)^{-\alpha-\beta}}{B(\alpha,\beta)},\quad x>0$$
where $B(\alpha,\beta)$ is the usual Beta function. Let $X$ be distributed as $\beta'(\alpha,\beta)$. Then for $-\alpha<t<\beta$,
$$\EE[X^t]=\frac{B(\alpha+t,\beta-t)}{B(\alpha,\beta)}.$$
Suppose that $\beta=1-\alpha$, then $\EE[X^t]=\frac{B(\alpha+t,1-\alpha-t)}{B(\alpha,1-\alpha)}.$
Using the fact that $B(z,1-z)=\Gamma(z)\Gamma(1-z)=\frac{\pi}{\sin(\pi z)}$, we get
$$\EE[X^t]=\frac{\sin(\alpha\pi)}{\sin\big((\alpha+t)\pi\big)}.$$
Thus, when $(\alpha,\beta)=\big(\frac{1}{2}-\frac{\theta}{\pi}, \frac{1}{2}+\frac{\theta}{\pi}\big)$, we have for $-\frac{1}{2}+\frac{\theta}{\pi}<t<\frac{1}{2}+\frac{\theta}{\pi}$,
$$\EE[X^t]=\frac{\sin(\pi/2-\theta)}{\sin(\pi/2-\theta+t\pi)}=\frac{\cos(\theta)}{\cos(\theta-t\pi)}.$$

Therefore, taking $x=1$, we have that for all $-\frac{1}{2}+\frac{\theta}{\pi}<t<\frac{1}{2}+\frac{\theta}{\pi}$, (replacing $b$ with $2t$ in Proposition \ref{cor24}) $\EE[X^t]=\EE[Y_S^{2t}]$. This implies Proposition \ref{law:Y_S}. \end{proof}

\subsection{Brownian motion on the quadrant with time dependent reflections}\label{secref}
Our purpose in this section and in Section \ref{deded} is to construct  a  Brownian motion in $\mathcal Q$ started at $z=(x,0)$ with $x>0$, and stopped
at its first hitting time of $(0,0)$, with time dependent reflections.

Suppose we are given a sequence of random variables
$(\Theta_n)_{n\ge 0}$
and a sequence of processes $(Z^n)_{n\ge 1}$, with $Z^n=\big(Z^n_t=(X^n_t,Y^n_t);\; t\le S_n\big)$, such that\,: 
\begin{itemize}
\item[(i)] There is $[\theta_-,\theta_+]\subset ]0,\pi/2[$ such that with probability $1$, for all $n\ge 0$, $\Theta_n\in [\theta_-,\theta_+]$.
\item[(ii)] Set $U_0=x$ and for $n\ge 1$, $U_n=Y^n_{S_n}$. Set also for $n\ge 0$,
$\mathcal{G}_n=\sigma\big((\Theta_k,Z^k);\; 1\le k\le n\big)\vee\sigma(\Theta_0).$
Then given $\mathcal G_n$, $Z^{n+1}$ is distributed as $\PP^{\Theta_n}_{U_n}$ (recall the definition of $\PP^\theta_u$ given in Section \ref{orbmhp}).
\end{itemize}
For $\theta\in ]0,\pi/2[$, set
$$v_1(\theta)=(-\tan(\theta),1)\; \hbox{ and }\;
v_2(\theta)=(1,-\tan(\theta)).$$


Set $T_0=0$ and
$T_n=\sum_{k=1}^n S_k$ for $n\ge 1$. For $n\ge 0$, set 
\begin{eqnarray*}
Z_t &=& (X^{2n+1}_{t-T_{2n}},Y^{2n+1}_{t-T_{2n}}) \quad\; \  \text{for all}\ t\in [T_{2n},T_{2n+1}[,\\
Z_t &=& (Y^{2n+2}_{t-T_{2n+1}},X^{2n+2}_{t-T_{2n+1}}) \ \ \text{for all}\ t\in [T_{2n+1},T_{2n+2}[.
\end{eqnarray*}

Thus, we have defined a process $(Z_t;\; t<T_\infty)$,
where $T_\infty=\lim_{n\to\infty}T_n$. 
For $t\ge T_\infty$, set $Z_t=(0,0)$.
Since for all $t<T_\infty$, $Z_t\ne (0,0)$, we have that $T_\infty=\sigma_0:=\inf\{t:\, Z_t=(0,0)\}$. 
However, it is not obvious that $Z$ is a continuous at $T_\infty$ (see Corollary \ref{contin}).

Note that there exists $B$, a two-dimensional Brownian motion, such that for $n\ge 0$,
$$\left\{ \begin{array}{lll}
dZ_t &=& dB_t+v_1(\Theta_{2n}) dL^1_t \quad\; \ \text{for all}\ t\in [T_{2n},T_{2n+1}[,\\
dZ_t&=& dB_t+v_2(\Theta_{2n+1}) dL^2_t \ \ \text{for all}\ t\in [T_{2n+1},T_{2n+2}[,
\end{array}\right.$$
with $L^1$ and $L^2$ respectively the local times processes of $X$ and
$Y$.
Define $(v_t;\;t<\sigma_0)$ by\,: for $n\ge 0$ 
\begin{eqnarray*}
v_t &=& v_1(\Theta_{2n}) \quad\; \  \text{for all}\ t\in [T_{2n},T_{2n+1}[,\\
v_t &=& v_2(\Theta_{2n+1}) \ \ \text{for all}\ t\in [T_{2n+1},T_{2n+2}[.
\end{eqnarray*}
Then for all $t<\sigma_0$,
\begin{equation}\label{rtrt}
Z_t=Z_0+B_t+\int_{0}^{t}v_s dL_s
\end{equation}
where $Z_0=(x,0)$ and $L=L^1+L^2$ is the accumulated local time
at $\partial\mathcal Q$ until $t$.
\subsection{The corner is reached}\label{deded}
For $a\ge 0$, set $\sigma_a:=\inf\{t:\; |Z_t|=a\}$.
Following \cite{MR792398}, we first prove that $\mathbb
P(\sigma_0\wedge\sigma_K<\infty)=1$ for all $K>x$. This is the
major difficulty we encountered here although the proof when the
angles of reflections remain constant on each boundary is quite
easy \cite[Lemma 2.1]{MR792398}.
The main idea is inspired from \cite{MR1837288}. Define for $n\ge
1$, $V_n={U_n}/{U_{n-1}}$. Then using the scaling property
(Proposition \ref{scalprop}) and the strong Markov property, we see that,  for all $n\ge 0$, given $\cG_n$, $V_{n+1}$
is distributed as
$\tilde{Y}_{\tilde{S}}$, where $\big((\tilde{X}_t,\tilde{Y}_t);\,
t\le \tilde{S}\big)$ has law $\PP^{\Theta_n}_1$.
\begin{lemma}\label{HRHR}
With probability $1$, $\sum_{n\ge 0} U_n$ is finite.
\end{lemma}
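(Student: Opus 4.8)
The plan is to show that $\sum_{n\ge 0} U_n<\infty$ almost surely by controlling the multiplicative increments $V_n = U_n/U_{n-1}$. The crucial fact is that, conditionally on $\cG_n$, $V_{n+1}$ is distributed as $\tilde Y_{\tilde S}$ where $(\tilde X,\tilde Y)$ has law $\PP^{\Theta_n}_1$, so by Corollary \ref{calcu} we have $\EE[\log V_{n+1}\mid \cG_n] = -\frac{\pi}{2}\tan(\Theta_n)$ and $\EE[(\log V_{n+1})^2\mid\cG_n] = \frac{\pi^2}{4}(1+2\tan^2(\Theta_n))$, both of which are, by hypothesis (i), bounded above and below by deterministic constants depending only on $[\theta_-,\theta_+]$. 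In particular there is a constant $m>0$ with $\EE[\log V_{n+1}\mid\cG_n]\le -m$ and a constant $C$ with $\EE[(\log V_{n+1})^2\mid\cG_n]\le C$ for all $n$.

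First I would write $\log U_n = \sum_{k=1}^n \log V_k + \log U_0$ and decompose $\log V_k = (\log V_k - \EE[\log V_k\mid\cG_{k-1}]) + \EE[\log V_k\mid\cG_{k-1}]$. The first part, summed, is a martingale $M_n$ with increments whose conditional second moments are bounded by $C$, hence $\EE[M_n^2]\le Cn$; the drift part is $\le -mn$. Then I would invoke the strong law of large numbers for martingales with bounded increment variances (e.g. $M_n/n\to 0$ a.s., which follows from $\sum_n \mathrm{Var}(M_n - M_{n-1})/n^2 <\infty$ and a Kronecker-type argument), to conclude $\frac{1}{n}\log U_n \to \EE$-drift-limit $\le -m <0$ almost surely. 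Consequently $\log U_n \le -\frac{m}{2} n$ for all large $n$ a.s., so $U_n \le e^{-mn/2}$ eventually, and the geometric tail gives $\sum_n U_n<\infty$ a.s.

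The main obstacle is making the "strong law" step clean without extra integrability assumptions on the $\Theta_n$'s beyond (i) — but since (i) gives uniform two-sided bounds on $\Theta_n$, all the conditional first and second moments of $\log V_{n+1}$ are uniformly bounded, which is exactly what is needed. A small technical point is that the $V_n$ need not be independent (the $\Theta_n$ may depend on the past), so one genuinely uses the martingale/Kronecker argument rather than the classical i.i.d.\ SLLN; alternatively, one can avoid the SLLN entirely by a direct first-moment bound: since $U_n = U_0\prod_{k=1}^n V_k$ and $\EE[V_{n+1}^{1/2}\mid\cG_n]$ equals $\frac{\cos(\Theta_n)}{\cos(\Theta_n - \pi/4)}$ by Corollary \ref{calcu} (with $b=1/2$), which is a deterministic constant $\rho(\Theta_n)<1$ uniformly bounded away from $1$ on $[\theta_-,\theta_+]$, one gets $\EE[U_{n+1}^{1/2}]\le \rho_+\,\EE[U_n^{1/2}]$ with $\rho_+<1$, hence $\EE[\sum_n U_n^{1/2}]<\infty$, so $\sum_n U_n^{1/2}<\infty$ a.s., and \emph{a fortiori} (since $U_n\to 0$, so $U_n\le U_n^{1/2}$ eventually) $\sum_n U_n<\infty$ a.s. This second route is shorter and I would present it as the main argument, keeping the proof essentially to the one-line geometric estimate $\EE[U_{n+1}^{1/2}\mid\cG_n]=\rho(\Theta_n)U_n^{1/2}$ with $\sup_{\theta\in[\theta_-,\theta_+]}\frac{\cos\theta}{\cos(\theta-\pi/4)}<1$.
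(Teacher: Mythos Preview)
Your first approach (martingale plus drift, then $M_n/n\to 0$) is correct and is exactly what the paper does: it writes $\sum_k\log V_k=M_n+\sum_k\EE_{\cG_{k-1}}[\log V_k]$, computes $\langle M\rangle_n=\sum_k\frac{\pi^2}{4}(1+\tan^2\Theta_{k-1})$, notes $\langle M\rangle_\infty=\infty$ so that $M_n/\langle M\rangle_n\to 0$, and concludes from the uniform bounds on $\Theta_k$. Since $\langle M\rangle_n$ is comparable to $n$, this is the same argument as yours.

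Your second (``preferred'') route has a genuine gap as written. The claim $\sup_{\theta\in[\theta_-,\theta_+]}\frac{\cos\theta}{\cos(\theta-\pi/4)}<1$ is false in general: with $b=1/2$ one has
\[
\frac{\cos\theta}{\cos(\theta-\pi/4)}=\frac{\sqrt{2}\,\cos\theta}{\cos\theta+\sin\theta},
\]
which exceeds $1$ whenever $\tan\theta<\sqrt{2}-1$, i.e.\ for $\theta<\pi/8$. Since hypothesis (i) only guarantees $\theta_->0$, you cannot rule this out. The fix is immediate: by Corollary~\ref{calcu}, $\EE[V_{n+1}^{\,b}\mid\cG_n]=\frac{\cos\Theta_n}{\cos(\Theta_n-b\pi/2)}$, and this ratio is $<1$ precisely when $|\,\Theta_n-b\pi/2\,|<\Theta_n$, i.e.\ when $0<b<4\Theta_n/\pi$. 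So choose any $b\in(0,4\theta_-/\pi)$ (which also lies in the admissible range of Corollary~\ref{calcu}); then $\rho_+:=\sup_{\theta\in[\theta_-,\theta_+]}\frac{\cos\theta}{\cos(\theta-b\pi/2)}<1$ and your geometric estimate $\EE[U_n^{\,b}]\le \rho_+^{\,n}\,U_0^{\,b}$ goes through, giving $\sum_n U_n^{\,b}<\infty$ a.s.\ and hence $\sum_n U_n<\infty$ a.s.\ (since $U_n\to 0$ forces $U_n\le U_n^{\,b}$ eventually). With this correction the moment argument is indeed shorter than the paper's martingale SLLN route.
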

\begin{proof}
For $n\ge 1$, write
$$U_n=x\exp\bigg(\sum_{k=1}^{n}\log(V_k)\bigg).$$
We denote by $\EE_{\cG_k}$ the conditional expectation with respect to $\cG_k$.
By Corollary \ref{calcu}, for all $k\ge 1$, 
$\EE_{\cG_{k-1}}[\log(V_{k})]=-\frac{\pi}{2}\ \tan(\Theta_{k-1})$ 
and $\EE_{\cG_{k-1}}[(\log(V_{k}))^2]=\frac{\pi^2}{4} (1+2
\tan^2(\Theta_{k-1}))$.
Note now that 
$$\sum_{k=1}^{n}\log(V_k)=M_n+\sum_{k=1}^n \EE_{\cG_{k-1}}[\log(V_k)]$$
where $M_n:=\sum_{k=1}^n\big(\log(V_k)- \EE_{\cG_{k-1}}[\log(V_k)]\big)$ is a
martingale. Denote by $\langle M\rangle_n$ its quadratic
variation given by
$$\sum_{k=1}^n
\EE_{\cG_{k-1}}\big[\big(\log(V_k)-\EE_{\cG_{k-1}}[\log(V_k)]\big)^2\big]=\sum_{k=1}^{n}
\frac{\pi^2}{4} \big(1+ \tan^2(\Theta_{k-1})\big).$$
Thus $\langle M\rangle_\infty=\infty$ and so $\lim_{n\to\infty}M_n/{\langle M\rangle_n}=0$.
Since $\Theta_k\in [\theta_-,\theta_+]$, this easily implies the lemma. 
\end{proof}

A first consequence of Lemma \ref{HRHR} is
\bcor\label{contin} With probability $1$, $\lim_{t\uparrow \sigma_0} Z_t=(0,0)$.
\ecor
\begin{proof}
For $\epsilon>0$ and $n\ge 0$, set 
$$A^{\epsilon}_n=\Big\{\sup_{s\in[T_n,T_{n+1}]}|Z_s|>\epsilon\Big\}.$$

Lemma \ref{rtt} (i), with $b=1$, implies that  for all $n\ge 0$,
$$\mathbb P(A^{\epsilon}_n|\mathcal G_n)\le \epsilon^{-1} U_n \text{cotan}(\theta_-)\  + 1_{\{U_n\ge\epsilon\}}.$$
Thus, by Lemma \ref{HRHR}, $\sum_n\mathbb P(A^{\epsilon}_n|\mathcal G_n)<\infty$ a.s. for all $\epsilon>0$ and the corollary follows by applying the
conditional Borel-Cantelli lemma.

\end{proof}

Lemma \ref{HRHR} will also be used to prove
\begin{lemma}\label{ward}
For all $K>x$, $\mathbb P(\sigma_0\wedge\sigma_K<\infty)=1$.
\end{lemma}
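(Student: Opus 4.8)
We must show that for all $K > x$, almost surely $\sigma_0 \wedge \sigma_K < \infty$, i.e. the process $Z$ either reaches the corner $(0,0)$ or exits the ball of radius $K$ in finite time.

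The plan is to argue by contradiction: assume that on a set of positive probability, $\sigma_0 \wedge \sigma_K = \infty$. On this event, $Z$ remains forever in the bounded region $\{z \in \mathcal Q : 0 < |z| < K\}$, and in particular $T_\infty = \sigma_0 = \infty$, so infinitely many of the reflection stages $[T_n, T_{n+1}]$ are traversed in an infinite total time. First I would record the two facts we already have in hand: by Lemma \ref{HRHR}, $\sum_{n \ge 0} U_n < \infty$ almost surely, hence $U_n \to 0$; and by Corollary \ref{contin}, on the event $\{\sigma_0 < \infty\}$ the process converges to $(0,0)$. So the real content is to rule out the scenario where the stages accumulate but the time does \emph{not} accumulate — i.e. $\sum_n S_n = \infty$ while $U_n \to 0$ and $|Z|$ stays bounded by $K$ and bounded away from $0$ fails (since $U_n\to 0$, $Z$ gets arbitrarily close to the corner infinitely often, yet by assumption never actually reaches it and keeps coming back).

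The key estimate is a lower bound on the probability that, \emph{once $Z$ is near the corner}, it either hits the corner or travels a macroscopic distance, within one or two reflection stages. Concretely: starting a stage at height $U_n$ small, Lemma \ref{rtt}(ii) gives an upper bound on the probability that $|Z|$ dips below any level $a < U_n$ before the stage ends, and Lemma \ref{rtt}(i) controls the chance of a large excursion; combined with the scaling property this shows $V_{n+1} = U_{n+1}/U_n$ has a law (that of $\tilde Y_{\tilde S}$ under $\PP^{\Theta_n}_1$) which is, uniformly over $\Theta_n \in [\theta_-,\theta_+]$, bounded away from being concentrated near $0$ or $\infty$. I would use this to show that the number of stages needed to push $|Z|$ from a value near some fixed level down past a small threshold, or alternatively up above $K$, is stochastically dominated by a geometric-type random variable; and crucially, each such stage, when started from height of order $\delta$, takes time of order $\delta^2$ in expectation (again by scaling: $\tilde S$ under $\PP^{\Theta_n}_1$ has finite expectation, uniformly in $\Theta_n \in [\theta_-,\theta_+]$, and one stage from height $U_n$ costs $U_n^2 \tilde S$). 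Since $\sum_n U_n^2 \le (\sum_n U_n)^2 < \infty$, the total elapsed time $T_\infty = \sum_n S_n$ would then be finite on the event that infinitely many stages are traversed — but $T_\infty = \sigma_0$, contradicting $\sigma_0 = \infty$. Hence $\mathbb P(\sigma_0 \wedge \sigma_K = \infty) = 0$.

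The main obstacle I anticipate is making the dichotomy clean: on the event $\{\sigma_0 \wedge \sigma_K = \infty\}$ one must show that only finitely many stages can keep $|Z|$ of order bounded below (else $\sigma_K$ would trigger by the large-excursion estimate Lemma \ref{rtt}(i), or the process would be forced to the corner), so that eventually $U_n$ is uniformly small, and then one needs the summability $\sum U_n^2 < \infty$ together with the uniform bound $\EE_{\mathcal G_n}[S_{n+1}] = U_n^2\, \EE[\tilde S \mid \Theta_n] \le C U_n^2$ to conclude $\EE[T_\infty] < \infty$ on a positive-probability event, contradicting $T_\infty = \sigma_0 = \infty$ there. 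Getting the uniform-in-$\theta$ bound on $\EE[\tilde S]$ under $\PP^\theta_1$ (finiteness of the expected hitting time of $\{X=0\}$ for the half-plane obliquely reflected Brownian motion) and stitching these conditional estimates into an unconditional contradiction via, e.g., the conditional Borel--Cantelli lemma or a direct summation argument, is where the care is needed; everything else follows from the already-established Lemmas \ref{rtt} and \ref{HRHR} and the scaling property.
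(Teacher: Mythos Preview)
Your proposal contains a genuine gap: the claim that $\EE[\tilde S]<\infty$ under $\PP^\theta_1$ is \emph{false} for every $\theta\in]0,\pi/2[$. To see this, recall that $Y_t^2-t$ is a local martingale (since $dY_t^2=2Y_t\,dB^2_t+dt$ and $Y_t\,dL_t(Y)=0$), so by optional stopping at the bounded time $S\wedge t$ and then Fatou as $t\to\infty$ one gets $\EE[S]\ge\EE[Y_S^2]$. But Proposition~\ref{law:Y_S} says $Y_S^2\sim\beta'(\tfrac12-\tfrac{\theta}{\pi},\tfrac12+\tfrac{\theta}{\pi})$, and a $\beta'(\alpha,\beta)$ law has infinite mean whenever $\beta\le 1$; here $\beta=\tfrac12+\tfrac{\theta}{\pi}<1$. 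Hence $\EE[\tilde S]=\infty$, and your bound $\EE_{\cG_n}[S_{n+1}]\le C\,U_n^2$ simply does not hold. The summation $\sum_n \EE_{\cG_n}[S_{n+1}]<\infty$ therefore cannot be established as you propose, and your contradiction collapses.

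The paper's argument is quite different and avoids moment bounds on $S$ altogether. It builds an auxiliary one-dimensional Brownian motion $(W_t,\;t<\sigma_0)$ out of $Z$ by setting, on each stage, $W^{n+1}_t=\cos(\Theta_n)(X^{n+1}_t-U_n)+\sin(\Theta_n)Y^{n+1}_t$ and concatenating. One then checks that on $\{\sigma_K=\infty\}$ the running supremum of $W$ is controlled by $2K+\sum_{n\ge 0}U_n$, which is a.s.\ finite by Lemma~\ref{HRHR}. Since a Brownian motion run for infinite time has unbounded supremum, this forces $\sigma_0<\infty$ on $\{\sigma_K=\infty\}$. Your route could in principle be rescued by working with $S_{n+1}\wedge\tau^{n+1}_K$ (the stage time truncated at the exit from the $K$-ball) and deriving a bound of the form $\EE_{\cG_n}[S_{n+1}\wedge\tau^{n+1}_K]\le C_K\,U_n^\alpha$ for some $\alpha>0$ via the tail of $Y_S$ and Lemma~\ref{rtt}(i), but that is a substantially harder computation than what you sketched, and is not the approach taken in the paper.
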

\begin{proof}
For all $n\ge 0$ and $t\in [0,S_{n+1}]$, set 
\begin{eqnarray*}
W^{n+1}_t 
&=& \cos(\Theta_n)(X^{n+1}_t-U_{n})+\sin(\Theta_{n})Y^{n+1}_t
\end{eqnarray*}
Recall $\sigma_0=\lim_{n\to\infty}T_n$.
Define the continuous process $(W_t;\; t\le \sigma_0)$ such that
$W_0=0$ and
for $n\ge 0$ and $t\in ]T_{n},T_{n+1}]$, $W_t=W^{n+1}_{t-T_{n}} +
W_{T_{n}}$.
Then, it is straightforward to check that $(W_t;\; t\le
\sigma_0)$ is a Brownian motion stopped at $\sigma_0$.
Since for all $n\ge 0$, $U_{n}\ge 0$ and $\Theta_n\in ]0,\pi/2[$, we get that on the event
$\{\sigma_K\ge T_{n+1}\}$,
$$\sup_{t\in [T_{n},T_{n+1}]} W_t \le {2} K + W_{T_{n}}.$$
Thus, on $\{\sigma_K=\infty\}$, $\sup_{t\le \sigma_0} W_t \le {2} K + \sup_{n\ge 0}
W_{T_n}$.
Now for all $n\ge 0$, $W^{n+1}_{S_{n+1}}=\sin(\Theta_{n}) U_{n+1} -
\cos(\Theta_{n}) U_{n}\le U_{n+1}$. Note that for all $n\ge 0$, 
$$W_{T_{n+1}}-W_{T_{n}}=W^{n+1}_{S_{n+1}}.$$
This implies that on the event
$\{\sigma_K=\infty\}$, $\sup_{t\le \sigma_0} W_t \le {2} K +
\sum_{n\ge 0} U_n$, which is a.s. finite using Lemma \ref{HRHR}.
This shows that a.s. $\{\sigma_K=\infty\}\subset
\{\sigma_0<\infty\}$ and finishes the proof.
\end{proof}

And following \cite{MR792398}, we prove
\begin{theorem}\label{ww}
With probability $1$, we have $\sigma_0<\infty.$
\end{theorem}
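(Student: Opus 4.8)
The plan is to derive $\sigma_0 < \infty$ a.s.\ from Lemma~\ref{ward} via a bootstrapping argument on dyadic scales, exactly as in \cite{MR792398}. Recall that Lemma~\ref{ward} gives, for every $K>x$, that $\mathbb{P}(\sigma_0 \wedge \sigma_K < \infty) = 1$; equivalently, on the event $\{\sigma_0 = \infty\}$ we must have $\sigma_K < \infty$ for all $K$, i.e.\ $\sup_{t<\sigma_0}|Z_t| = \infty$. So the whole point is to upgrade ``either hit the corner or escape to infinity'' into ``hit the corner'', by ruling out escape to infinity. The key is to use the scaling property (Proposition~\ref{scalprop}) together with the fact that Lemma~\ref{ward} is uniform in the starting configuration, to show that from any scale the process hits a \emph{smaller} scale with probability bounded below by a constant not depending on the scale.

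First I would fix a ratio, say, hitting $|Z| = a/2$ before $|Z| = 2a$: using Lemma~\ref{rtt}(ii) (with a fixed $b \in (0, 1 - 2\theta_+/\pi)$, legitimate since the reflection angles stay in $[\theta_-,\theta_+]\subset]0,\pi/2[$) and Lemma~\ref{rtt}(i) (with a fixed $b' \in (0, 1+2\theta_-/\pi)$), applied at the successive stopping times $S_k$ and combined through the strong Markov property and scaling, one gets that, started from $|Z|=a$, the probability of reaching the ball of radius $a/2$ before reaching radius $2a$ is at least some $q>0$ independent of $a$. Actually, the cleanest route is to work directly with the radial skeleton $(U_n)_{n\ge 0}$: by Lemma~\ref{HRHR}, $\sum_n U_n < \infty$ a.s., hence $U_n \to 0$, hence $\inf_{t<\sigma_0}|Z_t| = 0$ along the sequence $S_n$. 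Combined with Corollary~\ref{contin}, which already asserts $\lim_{t\uparrow\sigma_0} Z_t = (0,0)$, the process genuinely limits to the corner; what remains is purely that $\sigma_0 = \lim_n T_n$ is finite, i.e.\ that $\sum_n S_n < \infty$.

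So the substantive step is the finiteness of $\sum_{n\ge 0} S_n$. Here I would again invoke scaling: given $\mathcal{G}_n$, $S_{n+1}$ is distributed as $U_n^2 \tilde{S}$ where $\tilde S$ is the hitting time for a process of law $\PP^{\Theta_n}_1$, and one checks (e.g.\ via the optional stopping applied to $|Z_s|^2 - s \cdot(\text{something})$, or via a standard estimate on $\EE^\theta_1[\tilde S \wedge \sigma_K]$ together with Lemma~\ref{ward}) that $\EE[\tilde S \mathbf{1}_{\tilde S < \infty}]$ is bounded by a constant $c$ uniform over $\theta \in [\theta_-,\theta_+]$; a short argument using Lemma~\ref{ward} rules out $\tilde S = \infty$ with positive probability, so in fact $\EE^\theta_1[\tilde S] \le c$. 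Then $\EE_{\mathcal{G}_n}[S_{n+1}] \le c\, U_n^2$, and since $\sum_n U_n < \infty$ a.s.\ forces $\sum_n U_n^2 < \infty$ a.s., a conditional monotone-convergence / Fubini argument gives $\EE[\sum_n S_n \mid \sigma(\text{the } U_n\text{'s})] \le c \sum_n U_n^2 < \infty$ a.s., whence $\sum_n S_n < \infty$ a.s. Therefore $\sigma_0 = T_\infty < \infty$ a.s.

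The main obstacle I anticipate is pinning down the uniform bound $\sup_{\theta\in[\theta_-,\theta_+]} \EE^\theta_1[\tilde S] < \infty$ cleanly — one must be careful that $\tilde S$ is a.s.\ finite (not just $\tilde S \wedge \sigma_K$), which is where Lemma~\ref{ward} is really needed again, and one must get an $a$-free (equivalently $\theta$-free) constant so that the sum $\sum_n U_n^2$ controls $\sum_n S_n$. The rest is bookkeeping: assembling the conditional estimates along the filtration $(\mathcal{G}_n)$, and observing that $\sum U_n < \infty$ trivially implies $\sum U_n^2 < \infty$.
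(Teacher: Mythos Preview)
Your approach differs substantially from the paper's, and the central step contains a genuine error. You claim that $\sup_{\theta\in[\theta_-,\theta_+]}\EE^\theta_1[\tilde S]<\infty$, but in fact $\EE^\theta_1[\tilde S]=\infty$ for every $\theta\in(0,\pi/2)$. To see this, note that under $\PP^\theta_1$ the component $Y$ is a reflected Brownian motion started at $0$, so $Y_t^2-t$ is a true martingale; optional stopping at $t\wedge \tilde S$ gives $\EE[Y_{t\wedge \tilde S}^2]=\EE[t\wedge \tilde S]$, and letting $t\to\infty$ together with Fatou yields $\EE[\tilde S]\ge \EE[Y_{\tilde S}^2]$. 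But by Proposition~\ref{law:Y_S}, $Y_{\tilde S}^2$ has the Beta distribution of the second kind with parameters $(\tfrac12-\tfrac{\theta}{\pi},\tfrac12+\tfrac{\theta}{\pi})$, whose second parameter is strictly less than $1$; such a distribution has infinite mean. Hence $\EE[\tilde S]=\infty$, and the bound $\EE_{\cG_n}[S_{n+1}]\le c\,U_n^2$ on which your argument rests is unavailable.

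Your strategy is salvageable, but not by first moments: one would need to exploit the \emph{exponential} decay of $U_n$ (visible in the proof of Lemma~\ref{HRHR}) together with a tail bound $\PP^\theta_1(\tilde S>t)\le C t^{-\gamma}$ for some $\gamma>0$ uniform in $\theta$, and then run a Borel--Cantelli argument to get $S_{n+1}\le U_n^2\cdot n^{C'}$ eventually. This works but is considerably more delicate than what you wrote. By contrast, the paper avoids the issue entirely: it does not try to sum the $S_n$. Instead it builds a single harmonic function $\phi(z)=\mathcal R\big(z^{b}e^{-i\theta_-}\big)$ with $b=4\theta_-/\pi$, checks that $v_t\cdot\nabla\phi(Z_t)\le 0$ on the boundary for all admissible reflection directions, and uses the resulting supermartingale to bound $\PP(\sigma_\epsilon<\sigma_K)$ from below. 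Letting $\epsilon\downarrow 0$ (this is where Lemma~\ref{ward} enters, to pass from $\sigma_\epsilon$ to $\sigma_0$) and then $K\to\infty$ gives $\PP(\sigma_0<\infty)=1$ directly.
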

\begin{proof} Set $b=\frac{4{\theta_-}}{\pi}$.
Let $\phi(x,y)=\mathcal{R}\big((x+iy)^{b}e^{-i{\theta_-}}\big)$, then $\phi$ is harmonic on some open set $U$ containing $\mathcal{Q}^*$ and
\begin{eqnarray*}
\nabla \phi(x,0)&=&bx^{b-1}(cos(\theta_-),\sin(\theta_-)),\\
\nabla \phi(0,y)&=&by^{b-1}(sin(\theta_-),\cos(\theta_-)).
\end{eqnarray*}
Thus for all $t<\sigma_0$ such that $Z_t\in\partial\mathcal Q$, we
have $v_t.\nabla\phi(Z_t)\le 0$. It follows from (\ref{rtrt}) and
It\^o's formula that for all $0<\epsilon<x<K$ and $t\ge 0$,
$$\EE[\phi(Z_{t\wedge \sigma_{\epsilon}\wedge \sigma_{K})}]\le
\phi(x,0).$$
Letting $t\rightarrow\infty$ and using dominated convergence, we
deduce
$$\EE[\phi(Z_{\sigma_{\epsilon}\wedge \sigma_{K})}]\le
\phi(x,0).$$
Obviously $\phi(z)\ge\cos(\theta_-) |z|^b$ for all
$z\in\mathcal{Q}$.
Setting $p_{\epsilon,K}=\mathbb P(\sigma_{\epsilon}<\sigma_{K})$,
we get
$$\cos(\theta_-)\big(\epsilon^b p_{\epsilon,K}+
K^b(1-p_{\epsilon,K})\big)\le x^b.$$
From this, we deduce 
$$p_{\epsilon,K}\ge
\frac{(K^b-x^b/\cos(\theta_-))}{K^b-\epsilon^b}.$$
As in \cite{MR792398}, since $\sigma_0\wedge\sigma_K<\infty$,
$\lim_{\epsilon\to 0}p_{\epsilon,K}=\PP(\sigma_0<\sigma_K)$, this yields
\begin{equation}\label{tgt}
\mathbb P(\sigma_0<\sigma_K)\ge 1-\frac{x^b}{K^b\cos(\theta_-)}.
\end{equation}
Letting $K\rightarrow\infty$, it comes that $\mathbb
P(\sigma_0<\infty)=1$.
\end{proof}

\begin{remark}\label{incl}
Using the inclusion $\{\sup_{t<\sigma_0}|Z_t|>\epsilon\}\subset
\{\sigma_{\epsilon}<\sigma_0\}$ and (\ref{tgt}), we deduce that
for all $\epsilon>0$,
\begin{equation}\label{gfgf}
\lim_{x\rightarrow0+}\mathbb P(\sup_{t<\sigma_0}|Z_t|>\epsilon)=0.\end{equation}
This fact will be used in Section 3.
\end{remark}

\subsection{The local time process}\label{loctip}
 Following Williams \cite{MR779455},
we prove in this section that 
\begin{theorem}\label{hjh}
With probability $1$, $L_{\sigma_0}:=\lim_{t\uparrow\sigma_0}L_t$ is finite.
\end{theorem}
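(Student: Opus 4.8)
The plan is to exploit the same decomposition of $(Z_t;\,t<\sigma_0)$ into the successive legs $Z^{n+1}$ that was already used in Lemmas \ref{HRHR}, \ref{ward} and Theorem \ref{ww}, and to control the local time accumulated on each leg. Write $L_{\sigma_0} = \sum_{n\ge 0}\ell_{n+1}$, where $\ell_{n+1}$ denotes the local time at $\partial\mathcal Q$ accumulated during the time interval $[T_n,T_{n+1}]$; on this interval $Z$ is (after relabelling coordinates) a copy of the half-plane process $Z^{n+1}$ of law $\PP^{\Theta_n}_{U_n}$, reflected only at the single boundary line it touches, so $\ell_{n+1}$ is the local time at $0$ of the reflected coordinate of $Z^{n+1}$ run up to the hitting time $S_{n+1}$ of the other axis. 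By the scaling property (Proposition \ref{scalprop}), under $\PP^{\Theta_n}_{U_n}$ this local time scales like $U_n$ times the corresponding quantity under $\PP^{\Theta_n}_1$; more precisely, given $\mathcal G_n$, $\ell_{n+1}$ is distributed as $U_n\,\tilde L$, where $\tilde L$ is the boundary local time accumulated by a process of law $\PP^{\Theta_n}_1$ before it hits the other axis.

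The key estimate I would establish is a uniform-in-$\theta$ bound $\EE[\tilde L]\le C$ for the law $\PP^\theta_1$ with $\theta\in[\theta_-,\theta_+]$. This follows by choosing a suitable holomorphic test function $f$ and applying the optional stopping machinery already set up in Corollary \ref{cor24}: take, for instance, the linear function corresponding to $f(z)=z$ (or rather the harmonic function $\psi(x,y)$ whose normal derivative at the reflecting boundary is a positive constant, e.g. $\psi(x,y) = x + (\tan\theta)\, y$ on the relevant boundary, chosen so that $v_1(\theta)\cdot\nabla\psi$ is a strictly positive constant). Then $\psi(Z_{t\wedge S})$ minus a constant multiple of the local time is a true martingale (using Corollary \ref{cor23} for the required integrability, since the growth of $\psi$ is linear and $1\in]{-1+2\theta/\pi},1+2\theta/\pi[$ fails only at the endpoint, so one uses $\psi$ controlled by $|z|^{b_+}$ with $b_+$ slightly below $1+2\theta/\pi$ — here I would be slightly careful and perhaps work with a truncated version or use the $\log$-type functions as in Corollary \ref{calcu}). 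Optional stopping at $S$, together with $\psi(Z_S)\ge 0$, then gives $\EE[\tilde L]\le c_\theta\,\psi(1,0)$ with $c_\theta$ bounded uniformly on $[\theta_-,\theta_+]$. Consequently $\EE[\ell_{n+1}\mid\mathcal G_n]\le C\,U_n$, whence
$$
\EE\Big[\sum_{n\ge 0}\ell_{n+1}\Big]
= \sum_{n\ge 0}\EE\big[\EE[\ell_{n+1}\mid\mathcal G_n]\big]
\le C\,\EE\Big[\sum_{n\ge 0}U_n\Big].
$$

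It remains to see that $\EE[\sum_{n\ge 0}U_n]<\infty$, which is a quantitative strengthening of Lemma \ref{HRHR}: from $U_n = x\exp(\sum_{k=1}^n\log V_k)$, the conditional mean $\EE_{\mathcal G_{k-1}}[\log V_k] = -\tfrac{\pi}{2}\tan(\Theta_{k-1})\le -\tfrac{\pi}{2}\tan(\theta_-) =: -\delta<0$, and the conditional second moment of $\log V_k$ is bounded, so by a standard supermartingale/large-deviation argument (e.g. bounding $\EE_{\mathcal G_{k-1}}[V_k^{\rho}]$ for a small $\rho>0$ using Corollary \ref{calcu}, which gives $\EE[V_k^\rho\mid\mathcal G_{k-1}] = \cos(\Theta_{k-1})/\cos(\Theta_{k-1}-\rho\pi/2)<1$ for $\rho$ small enough uniformly on $[\theta_-,\theta_+]$) one gets $\EE[U_n^\rho]\le x^\rho q^n$ for some $q<1$; then Jensen or a direct summation along the lines $\EE[\sum_n U_n]\le \sum_n \EE[U_n]$ with $\EE[U_n]\le$ (something summable) finishes it — in fact $\EE[U_n] = x\prod_{k=1}^n\EE[V_k\mid\cdots]$ is not directly available since $\EE[V_k]$ need not be $<1$, so I would instead run the argument with the moment of order $\rho$ and use $\sum_n U_n \le (\sum_n U_n^\rho)^{1/\rho}$ when $\rho\le 1$, or simply sum $\EE[U_n^\rho]$ and note $U_n\to 0$ geometrically a.s. to control the sum. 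The main obstacle is precisely this step: getting an honest \emph{integrable} bound on $\sum_n U_n$ (Lemma \ref{HRHR} only gives a.s. finiteness), together with making the martingale argument for $\EE[\tilde L]$ rigorous at the borderline exponent $b=1$; both are handled by working with a moment of order $\rho<1$ and the uniform control of the reflection angles on $[\theta_-,\theta_+]$.
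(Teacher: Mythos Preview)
Your leg-by-leg decomposition $L_{\sigma_0}=\sum_{n\ge 0}\ell_{n+1}$ with $\EE[\ell_{n+1}\mid\mathcal G_n]\le C\,U_n$ is sound (indeed Section~\ref{intloctip} computes $\EE[\ell_{n+1}\mid\mathcal G_n]=U_n\,\mathrm{cotan}(\Theta_n)$ exactly, by applying the martingale property to the $Y$-coordinate and using Corollary~\ref{cor23} with $b=1$; no borderline issue arises, and the linear function you want is simply $\psi(x,y)=y$, not $x+\tan(\theta)y$, which gives $v_1(\theta)\cdot\nabla\psi=0$). The real gap is the target you set yourself afterwards: you try to prove $\EE\big[\sum_n U_n\big]<\infty$, and this is \emph{false} in general. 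Proposition~\ref{yes} shows that already for constant angles $\theta_1=\theta_2=\theta\le\pi/4$ one has $\EE[L_{\sigma_0}]=\infty$; equivalently $\EE[U_n]=x\,(\mathrm{cotan}\,\theta)^n$ does not sum. Your $\rho$-moment workarounds do not rescue this: from $\sum_n\EE[U_n^\rho]<\infty$ you cannot deduce $\EE\big[(\sum_n U_n^\rho)^{1/\rho}\big]<\infty$, and the inequality $\sum_n U_n\le(\sum_n U_n^\rho)^{1/\rho}$ goes in the wrong direction for extracting integrability of the left-hand side.

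The fix is to abandon integrability and argue almost surely, exactly as in the proof of Corollary~\ref{contin}. From $\EE[\ell_{n+1}\mid\mathcal G_n]\le C\,U_n$ and Lemma~\ref{HRHR} you get $\sum_n\EE[\ell_{n+1}\mid\mathcal G_n]<\infty$ a.s.; the standard predictable-compensator lemma (stop when the compensator exceeds $K$, use $\EE[\sum_{n\le T_K}\ell_{n+1}]\le K$, let $K\to\infty$) then yields $\sum_n\ell_{n+1}<\infty$ a.s. This salvages your strategy completely and is arguably more transparent than the paper's argument. The paper instead follows Williams: it builds a global Lyapunov function $f_\gamma=e^{-\gamma\tilde\phi}$ with $\tilde\phi(z)=\mathcal R(z^{\tilde b}e^{-i\tilde\theta})$ for a small $\tilde\theta<\theta_-\wedge\pi/4$, so that $v_t\cdot\nabla\tilde\phi(Z_t)\le -c|Z_t|^{\tilde b-1}$ on $\partial\mathcal Q$, and It\^o's formula gives a Laplace-transform bound $\EE[\exp(-c\gamma\int_0^{\sigma_0}|Z_s|^{\tilde b-1}\,dL_s)]\ge f_\gamma(Z_0)$; letting $\gamma\downarrow 0$ yields $\int_0^{\sigma_0}1_{\{|Z_s|\le r\}}\,dL_s<\infty$ a.s., and continuity of $Z$ at $\sigma_0$ finishes the job. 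That route avoids the leg decomposition entirely.
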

\begin{proof}
In what follows, we refer to the proof of Theorem 1 in
\cite{MR779455} for more details. Let $\tilde{\theta}\in ]0,\theta_-\wedge \pi/4[$
and set $\tilde{b}=\frac{4\tilde{\theta}}{\pi}$.
Le $\tilde{\phi}$ be defined as the function $\phi$ in the proof of
Theorem \ref{ww}, with the parameters $(b,\theta_-)$ replaced by $(\tilde{b},\tilde{\theta})$.
Then there exists $c>0$ such that for all $t$ for which
$Z_t\in\partial\mathcal{Q}$, we have $v_t.\nabla\tilde{\phi}(Z_t)\le -c
|Z_t|^{\tilde{b}-1}$.
For each $\gamma>0$, define $f_{\gamma}=e^{-\gamma\tilde{\phi}}$. Then
$f_{\gamma}$ is twice continuously differentiable in
$\mathcal{Q}^*$ and
$$\Delta f_{\gamma}(z)=\gamma^2 f_{\gamma}(z) (\tilde{b} |z|^{\tilde{b}-1})^2
\hbox{ for } z\in \mathcal{Q}^*.$$
Moreover for all $t$ such that $Z_t\in\partial\mathcal{Q}$,
\begin{eqnarray*}
v_t.\nabla f_\gamma(Z_t) &=& -\gamma f_\gamma(Z_t)
\big(v_t.\nabla\tilde{\phi}(Z_t)\big).
\end{eqnarray*}
For $t<\sigma_0$, set 
$$A_t=-\gamma
\int_{0}^{t}(v_s.\nabla\tilde{\phi}(Z_s))dL_s+\frac{\gamma^2}{2}\int_{0}^{t}(\tilde{b}|Z_s|^{\tilde{b}-1})^2ds.$$
and $A_{\sigma_0}=\lim_{t\uparrow\sigma_0}A_t$. Then
\begin{eqnarray*}A_{\sigma_0}
&\ge& c \gamma \int_{0}^{\sigma_0}|Z_s|^{\tilde{b}-1}dL_s+
\frac{\gamma^2}{2}\int_{0}^{\sigma_0}(\tilde{b}|Z_s|^{\tilde{b}-1})^2ds 
\;\ge\; c \gamma \int_{0}^{\sigma_0}|Z_s|^{\tilde{b}-1}dL_s.
\end{eqnarray*}
It\^o's formula implies that for $t<\sigma_0$, 
$$f_\gamma(Z_{t})e^{-A_t}
=f_{\gamma}(Z_0)+\int_{0}^{t}e^{-A_s}(\nabla
f_{\gamma}(Z_s).dB_s).$$
Taking the expectation, we get 
$$\EE\bigg[\exp\bigg(-c\gamma\int_{0}^{\sigma_0}|Z_s|^{\tilde{b}-1}dL_s\bigg)\bigg]\ge
f_{\gamma}(Z_0).$$
This easily implies that for all $r>0$,
$$\EE\bigg[\exp\bigg(-\gamma c
r^{\tilde{b}-1}\int_{0}^{\sigma_0}1_{\{|Z_s|\le r\}}dL_s\bigg)\bigg]\ge
f_{\gamma}(Z_0).$$
Letting $\gamma\downarrow0$, we get that a.s.
\begin{equation}\label{mi}
\int_{0}^{\sigma_0}1_{\{|Z_s|\le r\}}dL_s<\infty.
\end{equation}
Let $S_r=\sup\{t\ge 0 : |Z_t|>r\}$, then by the continuity of
$Z$, $S_r<\sigma_0$ and thus $L_{S_r}<\infty$. By combining this
with (\ref{mi}), we get $L_{\sigma_0}<\infty$.
\end{proof}

\subsection{On the integrability of $L_{{\sigma_0}}$}\label{intloctip}
In this section, Proposition \ref{yes} is proved. 
We use the notations of Section \ref{secref}.
Note that $L_{\sigma_0}=\sum_{n=1}^\infty L^n_{S_n}$, 
where $L^n$ is the local time at $0$ of $Y^n$ and where $Z^n=(X^n,Y^n)$.
Recall that for $n\ge 0$, given $\mathcal{G}_n$, the law of $Z^{n+1}$ is $\mathbb{P}^{\Theta_n}_{U_n}$, where $U_0=x$ and
$U_n=Y^n_{S_n}$ for $n\ge 1$.

Let $Z^0=(X^0_t,Y^0_t)_{t\le S^0}$ be a process of law  $\mathbb{P}^{\theta}_{x}$.
Then, if $L^0_t=L_t(Y^0)$, for all $t\ge 0$,
$Y^0_{t\wedge S^0}=B^2_{t\wedge S^0} + L^0_{t\wedge S^0}$
where $(B^2_{t\wedge S^0})_t$ is a Brownian motion stopped at time $S^0$. Thus 
$\mathbb{E}[Y^0_{t\wedge S^0}]=\mathbb{E}[L^0_{t\wedge S^0}].$
Taking the limit as $t\to\infty$ and using Corollary \ref{cor23} leads to $\mathbb{E}[L^0_{S^0}]=\mathbb{E}[Y^0_{S^0}]$.
But $\mathbb{E}[Y^0_{S^0}]=x\ \text{cotan}(\theta)$ by Corollary \ref{calcu} and this implies that
$$ \mathbb{E}[L^{n+1}_{S_{n+1}} | \mathcal{G}_n] = U_n\ \text{cotan}(\Theta_n).$$
Consequently 
$$\mathbb{E}[L_{\sigma_0}] = \sum_{n\ge 0} \mathbb{E} [U_n\ \text{cotan}(\Theta_n)].$$
If for all $n$, $U_n$ and $\Theta_n$ are independent, then
$$\mathbb{E} [U_n\ \text{cotan}(\Theta_n)]
= \mathbb{E} [\text{cotan}(\Theta_n)]  \mathbb{E} [U_n]
= \cdots = x\prod_{k=0}^n \mathbb{E} [\text{cotan}(\Theta_k)].$$

If for all $n$, $\Theta_{2n}=\theta_1\in ]0,\pi/2[$ and $\Theta_{2n+1}=\theta_2\in ]0,\pi/2[$, then setting $c_1=\text{cotan}(\theta_1)$ and
$c_2=\text{cotan}(\theta_2)$,
\begin{eqnarray}
\mathbb{E}[L_{\sigma_0}] &=& x( c_1 + c_1c_2 + c_1^2c_2 + c_1^2 c_2^2+\cdots)\nonumber\\
&=& x c_1\big(1+c_2+c_1c_2+c_1c_2^2+\cdots\big)\nonumber\\
&=& x c_1\big((1+c_2)+(1+c_2)c_1c_2+\cdots\big)\nonumber\
\end{eqnarray}
which is finite if and only if $c_1c_2 <1$. In this case, we have
$\mathbb{E}[L_{\sigma_0}]=\frac{xc_1(1+c_2)}{1-c_1c_2}$
and Proposition \ref{yes} is proved.
Note that, if $\theta_1=\theta_2=\theta$, then $\mathbb{E}[L_{\sigma_0}]< \infty$ if and only if $\theta\in ]\pi/4,\pi/2[$  and in this case $\mathbb{E}[L_{\sigma_0}]=\frac{x}{\tan(\theta)-1}$.


\section{Proof of Theorem \ref{propunic}}\label{prfthm12}
Theorem \ref{propunic} (i) is proved in Section \ref{prfthm12(1)}. 
For the construction of a solution, we use the Freidlin-Sheu formula for WBM (see Theorem
\ref{nh} below).
The uniqueness in law of the solutions of $(\hbox{ISDE})$ follows from the fact that WBM is the
unique solution of a martingale problem.

Theorem \ref{propunic} (ii) is proved in Section \ref{propunic(2)}.
To prove pathwise uniqueness for $(\hbox{ISDE})$ when $N=2$, we proceed as in \cite{MR3055262} using the local times techniques
introduced in \cite{MR770393,MR658680}.
The fact that the solution of $(\hbox{ISDE})$ is not strong when $N\ge 3$ is a consequence of a theorem by Tsirelson (see
Theorem \ref{Ts} below).

We prove Theorem \ref{propunic} only for $x=0$, the case
$x\ne 0$ following easily. 
\subsection{Proof of Theorem \ref{propunic} (i)} \label{prfthm12(1)}
Let us recall the Freidlin-Sheu formula  (see \cite{MR1743769} and
also \cite[Theorem 3]{MR2835247}).
\begin{theorem}\label{nh} \cite{MR1743769} Let $(X_t)_{t\ge 0}$ be a Walsh's
Brownian motion on $G$ and
$B^X_t=|X_t|-|X_0|-L_t(|X|)$. Then $B^X$ is a Brownian motion and
for all $f\in C^2_b(G^{\ast})$, we have
$$f(X_t)=f(X_0)+\int_0^tf'(X_s) dB^X_s+\frac{1}{2}\int_0^t f''(X_s)
ds+f'(0) L_t(|X|).$$
We call $B^X$ the Brownian motion associated to $X$.
\end{theorem}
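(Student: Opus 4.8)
The plan is to reduce the formula to the radial coordinate, peel off the part of $f$ that carries the boundary term, and then identify a continuous local martingale. \emph{Step 1 (the radial part).} First I would show that $R:=|X|$ is a reflecting Brownian motion on $\RR_+$: if $g\in C^2_b(\RR_+)$ and $f=g(|\cdot|)$ then $f\circ e_j=g$ for every $j$, so $\bar f=g$, $f_i-\bar f\equiv 0$ and $P_tf(x)=T^+_tg(|x|)$, which identifies the law of $R$ with that of $|W|$ for a standard Brownian motion $W$. Writing the Skorokhod decomposition $R_t=R_0+B^X_t+L_t(|X|)$ (with $B^X$ a continuous local martingale, $L(|X|)$ nondecreasing and carried by $\{s:R_s=0\}$, and $L_t(|X|)$ equal to the symmetric local time at $0$ of the semimartingale $R$) we get $\langle B^X\rangle_t=\langle R\rangle_t=t$, so $B^X$ is a Brownian motion by Lévy's characterization. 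This already proves that $B^X$ is a Brownian motion, and it is exactly the asserted identity for $f=|\cdot|$. \emph{Step 2 (reduction).} Since $|\cdot|\in C^2_b(G^\ast)$ with $(|\cdot|)'(0)=\sum_ip_i=1$, given $f\in C^2_b(G^\ast)$ set $h:=f-f'(0)\,|\cdot|$; then $h\in C^2_b(G^\ast)$, $h''=f''$ on each ray, and $h'(0)=\sum_ip_i(f\circ e_i)'(0+)-f'(0)=0$, so $h\in\mathcal D$. Hence it suffices to prove the boundary-term-free identity $h(X_t)=h(X_0)+\int_0^th'(X_s)\,dB^X_s+\tfrac12\int_0^th''(X_s)\,ds$ for $h\in\mathcal D$; adding $f'(0)$ times the radial decomposition of Step 1 reconstitutes the theorem, since $h'+f'(0)=f'$ on each ray.

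\emph{Step 3 (the formula for $h\in\mathcal D$; the main obstacle).} The naive route — cutting the time axis at the successive entrances/exits of $\{|X|>0\}$, applying one-dimensional Itô to $h$ on each excursion (on which $X$ stays on one ray and is driven by $B^X$, as $L$ does not move there) and summing — is \emph{inadmissible}, because the left endpoints of these excursion intervals are not stopping times, so the stochastic-integral pieces cannot be recombined; this is the real difficulty at the vertex. Instead I would identify $M^h_t:=h(X_t)-h(X_0)-\tfrac12\int_0^th''(X_s)\,ds$ by its quadratic variation. First, $M^h$ is a continuous local martingale: localizing with radial cutoffs $\chi_R=\rho(|\cdot|)$ ($\rho\equiv1$ on $[0,R]$, $\rho\equiv0$ on $[R+1,\infty)$, $\rho'(0)=0$) one has $h\chi_R\in\mathcal D_0\subset\operatorname{dom}(A)$ with $A(h\chi_R)=(h\chi_R)''/2$, so $M^{h\chi_R}$ is a martingale agreeing with $M^h$ up to the exit time of $\{|x|<R\}$. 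Next, from the carré-du-champ identity $\langle M^f,M^g\rangle_t=\int_0^t f'(X_s)g'(X_s)\,ds$ for $f,g\in\mathcal D_0$ (equivalently Itô on each ray, the vertex being Lebesgue-null in time) one gets, after localization, $\langle M^h\rangle_t=\int_0^t h'(X_s)^2\,ds$; testing against $M^{\psi(|\cdot|)}$ for compactly supported radial $\psi\in C^2_b(\RR_+)$ with $\psi'(0)=0$ — for which Step 1 gives $M^{\psi(|\cdot|)}_t=\int_0^t\psi'(R_s)\,dB^X_s$ with no boundary term — yields $\int_0^t h'(X_s)\psi'(R_s)\,ds=\langle M^h,M^{\psi(|\cdot|)}\rangle_t=\int_0^t\psi'(R_s)\,d\langle M^h,B^X\rangle_s$; letting $\psi'$ run over bump functions exhausting $(0,\infty)$ forces $d\langle M^h,B^X\rangle_s=h'(X_s)\,ds$ on $\{R_s>0\}$, hence everywhere. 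Finally $N_t:=M^h_t-\int_0^th'(X_s)\,dB^X_s$ is a continuous local martingale with $N_0=0$ and, from the three brackets just computed, $\langle N\rangle\equiv0$, so $N\equiv0$; this is the desired identity for $h$, and with Step 2 the theorem follows.

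The technical heart is Step 3: the absence of a boundary term for functions with $h'(0)=0$, where the quadratic-variation identification takes the place of the (invalid) pathwise excursion sum. A subsidiary point to check carefully is the normalization in Step 1 — that the symmetric local time at $0$ of the semimartingale $|X|$ indeed coincides with the boundary local time in the reflecting-Brownian-motion decomposition, with no spurious factor of $2$ — and, in Step 3, the routine verifications that the cutoff functions $h\chi_R,\psi(|\cdot|)$ lie in $\mathcal D_0$ and that the carré-du-champ formula holds for this core.
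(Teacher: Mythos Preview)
The paper does not prove Theorem~\ref{nh}; it is quoted as the Freidlin--Sheu formula with a citation to \cite{MR1743769} (and \cite[Theorem~3]{MR2835247}) and used as a black box. So there is no proof in the paper to compare against.

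That said, your argument is sound and self-contained. Step~1 is the standard identification of $|X|$ as a reflecting Brownian motion via the semigroup, together with the Skorokhod decomposition. Step~2 is a clean linear reduction to $\mathcal D$. Step~3 is the substantive part, and your way around the non-stopping-time issue at the vertex --- identifying $M^h$ through its brackets rather than summing over excursions --- is correct: the localizations $h\chi_R$ and $\psi(|\cdot|)$ (with $\rho'(0)=\psi'(0+)=0$) do lie in $\mathcal D_0$, the carr\'e-du-champ identity $\langle M^f,M^g\rangle_t=\int_0^t f'(X_s)g'(X_s)\,ds$ follows from the martingale problem for $fg\in\mathcal D_0$ (which holds since your test functions have compact support), and the bracket computation $\langle N\rangle\equiv 0$ goes through as written because the zero set of $R$ is Lebesgue-null. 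The only caveats you already flagged --- the local-time normalization in Step~1 and the $\mathcal D_0$ membership of the cutoffs --- are routine to verify.
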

Remark that in this formula the local martingale part of $f(X_t)$
is always a stochastic integral with respect to $B^X$. This is an
expected fact since $B^X$ has the martingale representation
property for $(\mathcal F^X_t)_t$ (\cite[Theorem 4.1]{MR1022917}).
This martingale representation property will be used 
to prove the uniqueness in law of the solutions to $(\hbox{ISDE})$.

\subsubsection{Construction of a solution of $(\hbox{ISDE})$}\label{descr}
Let $X$ be a WBM with $X_0=0$ and let $B^X$
be the Brownian motion associated to $X$. Take an $N$-dimensional
Brownian motion $V=(V^{1},\cdots,V^{N})$ independent of $X$. Let
$(\mathcal F_t)$ denote the filtration generated by $X$ and $V$.
For $i\in[1,N]$, define
$$W^{i}_t=\int_{0}^{t} 1_{\{X_s\in E_{i}\}}dB^X_s + \int_{0}^{t}
1_{\{X_s\notin E_{i}\}}dV^{i}_s.$$
Then $W:=(W^1,\cdots,W^N)$ is an $N$-dimensional $(\mathcal
F_t)$-Brownian motion by L\'evy's theorem and
$$B^X_t=\sum_{i=1}^{N}\int_{0}^{t}1_{\{X_s\in E_i\}}dW^i_s.$$
Then, using Theorem \ref{nh}, $(X,W)$ solves $(\hbox{ISDE})$. Denote by $\mu$ the law of $(X,W)$.

\subsubsection{Uniqueness in law}\label{descu}
To prove the uniqueness in law, we apply two lemmas. The first lemma states that the WBM
is the unique solution of a martingale problem.
The second lemma gives conditions that ensure that a Walsh's
Brownian motion is independent of a given family of Brownian motions.
\begin{lemma}\label{martprop}
Let $(\mathcal{F}_t)$ be a filtration and $X$ be a $G$-valued
$(\mathcal{F}_t)$-adapted continuous process such that for
all $f\in\mathcal{D}$,
\beq M^f_t:= f(X_t)-f(x)-\frac{1}{2}\int_0^t f''(X_s) ds.
\label{MPeq}\eeq
is a martingale with respect to $(\mathcal{F}_t)$. Then $X$ is an
$(\mathcal F_t)$-WBM.
\end{lemma}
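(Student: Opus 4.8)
The plan is to recognise the hypothesis as the statement that $X$ solves a well-posed martingale problem, and then to quote the standard fact that any solution of a well-posed martingale problem is a Markov process with the prescribed semigroup. First I would restrict attention to the subclass $\mathcal D_0=\{f\in\mathcal D:\ f,f''\in C_0(G)\}$, which is contained in $\mathcal D$. For $f\in\mathcal D_0$ one has $Af=f''/2$, so the hypothesis says precisely that
$$M^f_t=f(X_t)-f(x)-\int_0^t Af(X_s)\,ds$$
is an $(\mathcal F_t)$-martingale for every $f\in\mathcal D_0$; that is, $X$ is an $(\mathcal F_t)$-solution, started from $x$, of the martingale problem for $(A,\mathcal D_0)$. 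Since, as recorded just after \eqref{domain}, $(A,\mathcal D_0)$ generates the Feller semigroup $(P_t)$ in a unique way --- equivalently, $\mathcal D_0$ is a core for the generator of $(P_t)$ --- this martingale problem is well-posed: for each starting point it admits exactly one solution, and that solution is the $(P_t)$-Markov process, i.e.\ the Walsh Brownian motion.

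To upgrade this to the full $(\mathcal F_t)$-WBM property --- that given $\mathcal F_t$, the shifted process $(X_{t+s})_{s\ge0}$ is a WBM started at $X_t$ --- I would run the usual conditioning argument. Fix $t\ge0$ and let $(Q_\omega)_\omega$ be a regular conditional distribution, given $\mathcal F_t$, of the shifted path $(X_{t+s})_{s\ge0}$, regarded as a random element of the Polish space $C([0,\infty),G)$. Choose a countable set $\{f_k\}\subseteq\mathcal D_0$ that is dense in $\mathcal D_0$ for the norm $f\mapsto\|f\|_\infty+\|f''\|_\infty$. For each $k$ and each pair of rationals $0\le q\le r$ one has $\EE[\,M^{f_k}_{t+r}-M^{f_k}_{t+q}\mid\mathcal F_{t+q}\,]=0$; taking a further conditional expectation over $\mathcal F_t$ and applying a monotone-class argument transfers the martingale identity to the canonical process under $Q_\omega$, for $\PP$-a.e.\ $\omega$ lying outside a single null set chosen uniformly over $k$ and over the rational time pairs, and one then extends it to all times by right-continuity and to all of $\mathcal D_0$ by density (here $f_k\to f$ and $f_k''\to f''$ uniformly force $M^{f_k}\to M^f$ uniformly). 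Since the shifted path visibly starts at the $\mathcal F_t$-measurable value $X_t$, the law $Q_\omega$ is concentrated on paths issued from $X_t(\omega)$ for a.e.\ $\omega$. By well-posedness, $Q_\omega=\PP^{\mathrm{WBM}}_{X_t(\omega)}$ for a.e.\ $\omega$, which is exactly the statement that, conditionally on $\mathcal F_t$, $(X_{t+s})_{s\ge0}$ is a WBM started at $X_t$; together with the hypothesis that $X$ is $(\mathcal F_t)$-adapted, this shows that $X$ is an $(\mathcal F_t)$-WBM.

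The routine ingredients here are the reduction from $\mathcal D$ to $\mathcal D_0$ and the appeal to well-posedness of the martingale problem, which is already recorded in the excerpt. The one step that needs genuine care --- and which I expect to be the only real obstacle --- is the conditioning argument: the exceptional null set must be fixed uniformly over the countable family $\{f_k\}$ and over the rational time pairs before passing to limits in both time and test function, and one must check that the regular conditional law is supported on paths starting from $X_t$. This is, however, exactly the textbook argument that a solution of a well-posed martingale problem is Markov (see e.g.\ Ethier--Kurtz, Chapter~4), so in the final write-up I would simply invoke it rather than reproduce the details.
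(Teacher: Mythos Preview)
Your proof is correct but takes a genuinely different route from the paper's. You invoke the abstract Ethier--Kurtz machinery: since $(A,\mathcal D_0)$ is a core for the Feller generator, the martingale problem for $(A,\mathcal D_0)$ is well-posed, and the standard conditioning argument upgrades uniqueness to the $(\mathcal F_t)$-Markov property. The paper instead follows Barlow--Pitman--Yor \cite{MR1022917} by hand: for each ray $E_i$ it introduces the specific test functions $f_i(x)=q_i|x|\,1_{\{x\in E_i\}}-p_i|x|\,1_{\{x\notin E_i\}}$ and $g_i=f_i^2$, shows that $Y^i_t:=f_i(X_t)$ is a local martingale, computes $\langle Y^i\rangle$ from $g_i$, and rewrites $Y^i$ as the solution of an SDE $dY^i_t=\phi(Y^i_t)\,dU^i_t$ driven by an $(\mathcal F_t)$-Brownian motion $U^i$; pathwise uniqueness for this Nakao-type SDE then pins down the conditional law of each $Y^i$, and hence of $X$, given $\mathcal F_s$. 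The paper also pauses to note that its conventions for $f'(0)$ and $f''(0)$ rule out the trivial solution $X_t\equiv 0$ that slipped through in the original \cite{MR1022917} formulation; your approach handles this automatically, since $\mathcal D_0$ contains functions with $f''(0)\ne 0$. Your argument is shorter and more conceptual but leans on the (stated, not proved) assertion that $\mathcal D_0$ is a core; the paper's argument is self-contained in this respect and, as a by-product, exhibits the skew-Brownian-motion structure of the coordinates $f_i(X)$.
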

\begin{proof}
We exactly follow the proof of \cite[Theorem 3.2]{MR1022917}
and only check that with our conventions $f'(0)=f''(0)=0$
for $f\in\mathcal D$, we avoid all trivial solutions to the
previous martingale problem (with the hypothesis of Theorem 3.2
of \cite{MR1022917}, the trivial process $X_t=0$ is a possible
solution of the martingale problem (3.3) in \cite{MR1022917}).
For $i\in [1,N]$, set $q_i=1-p_i$ and let $f_i$ and $g_i$ be
defined by
\begin{eqnarray*}
f_i(x) &=& q_i|x| 1_{\{x\in E_i\}} - p_i|x| 1_{\{x\not\in
E_i\}},\\
g_i(x)&=& \big(f_i(x)\big)^2= q_i^2|x|^2 1_{\{x\in E_i\}} +
p_i^2|x|^2 1_{\{x\not\in E_i\}}.
\end{eqnarray*}
Then $f_i$ and $g_i$ are $C^2$ on $G^*$.
We have $f'_i(x)=q_i$ for $x\in E_i^*$, $f'_i(x)=-p_i$ for
$x\not\in E_i$ and $f'_i(0)=0$.
Moreover, $f''_i(x)=0$ for $x\in G$.
We also have $g'_i(x)=2 q_i^2|x|$ for $x\in E_i^*$,
$g'_i(x)=2p_i^2|x|$ for $x\not\in E_i$ and $g'_i(0)=0$.
Moreover, $g''_i(x)=2 q_i^2$ for $x\in E_i^*$, $g''_i(x)=2 p_i^2$
for $x\not\in E_i$ and $g''_i(0)=2 p_i q_i$.
Set $Y^i_t:=f_i(Z_{t})$. 
Although $f_i$ is not bounded, by a localization argument, we
have that $Y^i_t$ is a local martingale. We also have that $(Y^i_t)^2 - \frac{1}{2}\int_0^{t}
g''_i(Z_s) ds$ is a local martingale.
Thus 
$$\langle Y^i\rangle_t=
\int_0^{t} \big(q_i^2 1_{\{Z_s\in E_i^*\}} 
+ p_i^2 1_{\{Z_s\not\in E_i\}} 
+ p_i q_i 1_{\{Z_s = 0\}}\big) ds.$$
Set $$U^i_t= \int_0^t \big(q_i^{-1} 1_{\{Y^i_s>0\}} + p_i^{-1}
1_{\{Y^i_s< 0\}} +\big(p_iq_i\big)^{-1/2}1_{\{Y^i_s= 0\}}\big)
dY^i_s.$$
Then $U^i_t$ is a local martingale with $\langle U^i\rangle_t=t$; that is $U^i_t$ is a Brownian motion.
Let $\phi(y)=q_i 1_{\{y>0\}} + p_i 1_{\{y< 0\}}
+\sqrt{p_iq_i}1_{\{y= 0\}}$.
Then $Y^i$ is a solution of the stochastic differential
equation
$$Y^i_t=Y^i_0+\int_0^t \phi(Y^i_s)dU^i_s.$$
As in \cite{MR1022917}, the solution of this SDE is pathwise
unique and following the end of the proof of \cite[Theorem 3.2]{MR1022917}, we arrive at
$$\EE[f(Z_t)|\mathcal F_s]=P_{t-s}f(Z_s)$$
for all $s\le t$ and $f\in C_0(G)$, and where $P_t$ is the semigroup of the Walsh's Brownian
motion.
\end{proof}

\begin{lemma}\label{mawl0}
Let $(\mathcal G_t)$ be a filtration, $X$ be a $(\mathcal G_t)$-WBM and 
$B=(B^1,\cdots,B^d)$ be a $(\mathcal G_t)$-Brownian motion in $\mathbb R^d$, with $d\ge 1$. 
Denote by $B^X$ the Brownian motion associated to $X$.
Then $B^X$ and $B$ are independent if and only if $X$ and $B$ are independent.
\end{lemma}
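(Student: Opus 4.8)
The plan is to prove both implications by understanding what the $\sigma$-field generated by $B^X$ is, relative to the one generated by $X$. One direction is trivial: if $X$ and $B$ are independent, then since $B^X$ is a measurable functional of $X$ (namely $B^X_t = |X_t| - |X_0| - L_t(|X|)$, and the local time is a measurable functional of the path of $|X|$), $B^X$ and $B$ are independent as well. So the content is the forward implication: assuming $B^X$ and $B$ are independent, deduce that $X$ and $B$ are independent.

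The key point I would exploit is that $X$ is \emph{measurable with respect to} $\sigma(B^X)$ together with an independent source of randomness that drives only the choice of rays. More precisely: conditionally on the path of $|X|$ — equivalently, conditionally on $B^X$, since $|X|$ is the reflection of $B^X$ and hence a measurable functional of $B^X$ — the ray labels chosen during the successive excursions of $|X|$ away from $0$ are an i.i.d.\ sequence with law $(p_1,\dots,p_N)$, independent of everything in $\mathcal G_0$ and of the whole path $|X|$. This is exactly the excursion-theoretic description of Walsh's Brownian motion (and it is implicit in the construction via \cite{MR1022917}). So one can write $X = \Psi(B^X, \xi)$ where $\xi$ is this i.i.d.\ label sequence, $\Psi$ is a deterministic measurable map, and — crucially — $\xi$ is independent of $B^X$. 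The first step of the proof is therefore to set up this representation carefully, citing the excursion decomposition of WBM; the subtlety is that $\xi$ is not literally independent of the ambient filtration a priori, so one must argue that, because $X$ is a $(\mathcal G_t)$-WBM, the labels are generated "freshly" at each excursion and are independent of $\mathcal G_{t}$ up to the start of that excursion, hence in particular independent of $B$ as soon as $B$ is independent of $B^X$.

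The second step is the measure-theoretic combination. We have three objects: $B^X$, $\xi$, and $B$. By hypothesis $B^X \perp B$; by the excursion structure $\xi \perp (B^X, B)$ — here I would need $\xi$ independent of $B$, which follows since $B$ is a $(\mathcal G_t)$-Brownian motion and the labels are drawn using randomness independent of the past, so they are independent of the $\sigma$-field generated by $B$ jointly with $B^X$. From $\xi \perp (B^X,B)$ and $B^X \perp B$ one gets that $B^X$, $\xi$, $B$ are mutually independent (in the sense that the joint law is the product), hence $(B^X,\xi) \perp B$, hence $X = \Psi(B^X,\xi)$ is independent of $B$. That closes the forward direction.

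The main obstacle I anticipate is making the claim "$\xi \perp B$" rigorous: it requires pinning down in what sense the ray-choices of a $(\mathcal G_t)$-WBM are independent of the filtration, which in turn rests on the strong Markov property of $X$ at the start of each excursion interval together with the fact that a WBM started at $0$ re-randomizes its ray independently of the driving Brownian motion $B^X$. A clean way to handle this, avoiding a full excursion-theory digression, is to instead argue at the level of filtrations: show $\mathcal F^{B^X}_\infty \vee \sigma(\xi) \supseteq \mathcal F^X_\infty$ with $\xi$ independent of $\mathcal F^{B}_\infty$, using the martingale representation property of $B^X$ for $(\mathcal F^X_t)_t$ quoted just before the lemma (\cite[Theorem 4.1]{MR1022917}) to control the relation between $\mathcal F^X$ and $\mathcal F^{B^X}$, and then invoking a standard conditional-independence criterion (e.g.\ that $A \perp C$ and $B \perp (A\vee C)$ imply $(A\vee B)\perp C$). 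Either route works; the excursion description is the more transparent one and is what I would write up.
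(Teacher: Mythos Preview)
Your excursion-based approach is genuinely different from the paper's, and the step you yourself flag as the obstacle---proving $\xi \perp B$---is a real gap that your sketch does not close. Saying ``the labels are drawn using randomness independent of the past'' is not a proof: the labels are part of $X$, and the hypothesis that $X$ is a $(\mathcal G_t)$-WBM only tells you the conditional law of $X_{t+\cdot}$ given $\mathcal G_t$, not that the ray labels are independent of the whole path of $B$. Excursion start times are not stopping times, so the strong Markov property does not apply there directly; and arguing at hitting times of $\epsilon$ by $|X|$ still requires knowing that, given $\mathcal G_T$, the \emph{future of $B$} is independent of the future ray choices---which is essentially the conclusion you want. So the argument, as written, is circular.

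The paper bypasses all of this. It uses the martingale representation property you cite, but in a much more direct way than your final paragraph suggests: every bounded $\sigma(X)$-measurable $U'$ is written as $\EE[U']+\int_0^\infty H_s\, dB^X_s$ (Barlow--Pitman--Yor), and every bounded $\sigma(B)$-measurable $U$ as $\EE[U]+\sum_i\int_0^\infty H^i_s\, dB^i_s$. Since $B^X$ and each $B^i$ are $(\mathcal G_t)$-Brownian motions and are independent by hypothesis, $\langle B^X,B^i\rangle\equiv 0$; hence $\EE[UU']=\EE[U]\EE[U']$ by It\^o isometry. No excursion decomposition, no auxiliary randomness $\xi$, no conditional-independence bookkeeping. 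If you want to salvage your route, you would need an honest proof that the conditional law of $X$ given $(B^X,B)$ equals its conditional law given $B^X$ alone, and the clean way to get that is precisely the martingale-representation computation above---at which point the excursion packaging is superfluous.
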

\begin{proof}
Clearly, if $X$ and $B$ are independent, then $B^X$ and $B$ are independent. Let us prove the converse.
Let $U$ be a bounded $\sigma(B)$-measurable random variable. Then$$U=\EE[U]+\sum_{i=1}^d
\int_{0}^{\infty}H^i_s dB^i_s$$
with $H^i$ predictable for the filtration $\mathcal F^{B}_\cdot$
and $E[\int_{0}^{\infty}(H^i_s)^2 ds]<\infty$. Let $U'$ be a
bounded $\sigma(X)$-measurable random variable. Since $B^X$ has
the martingale representation property for $\mathcal F^X_\cdot$ \cite[Theorem 4.1]{MR1022917}, we deduce that
$$U'=\EE[U']+\int_{0}^{\infty}H_s dB^X_s$$
with $H$ predictable for $\mathcal F^X_\cdot$ and
$\EE[\int_{0}^{\infty}(H_s)^2 ds]<\infty$. Then $H$ and
$(H^i)_{1\le i\le d}$ are also predictable for $(\mathcal G_t)$.
It is also easy to check that $B^X$ is a $(\mathcal
G_t)$-Brownian motion. Now
\begin{eqnarray*}
\EE[UU']&=&
\EE[U]\EE[U']+\EE\left[\sum_{i=1}^{d}\int_{0}^{\infty}H^i_s
dB^i_s\int_{0}^{\infty}H_s dB^X_s\right]\\
&=& \EE[U]\EE[U']+\sum_{i=1}^{d} \EE\left[\int_{0}^{\infty}H^i_s
H_s d\langle B^i,B^X\rangle_s\right]\\
&=& \EE[U]\EE[U'].
\end{eqnarray*}
\end{proof}

Let $(X,W)$ be a solution of $(\hbox{ISDE})$ defined on a filtered probability space $(\Omega,(\mathcal F_t),\PP)$ and such that $X_0=0$.
Without loss of generality, we can assume that $\mathcal F_t=\mathcal F^X_t\vee \mathcal F^W_t$.
For all $f\in \mathcal{D}$,
$\sum_{i=1}^N \int_{0}^{t}f'(X_s)1_{\{X_s\in E_i\}}dW^i_s$ is a
martingale and therefore $X$ is a solution to the martingale problem of Lemma
\ref{martprop}. Thus $X$ is a WBM.
Let $B$ be a Brownian motion independent of $(X,W)$, denote by
$B^X$ the Brownian motion associated to $X$ and set $\mathcal
G_t={\mathcal F_t}\vee \mathcal F^B_t$.
Note that $B^X$ is a $(\cG_t)$-Brownian motion.
For $i\in[1,N]$, define
$$V^{i}_t=\int_{0}^{t} 1_{\{X_s\in E_{i}\}}dB_s + \int_{0}^{t}
1_{\{X_s\notin E_{i}\}}dW^{i}_s.$$
Then $V:=(V^{1},\cdots,V^{N})$ is an $N$-dimensional $(\cG_t)$-Brownian
motion independent of $B^X$. By the
previous lemma $V$ is also independent of $X$.
It is easy to check that for all $i\in[1,N]$,
$$W^{i}_t=\int_{0}^{t} 1_{\{X_s\in E_{i}\}}dB^X_s + \int_{0}^{t}
1_{\{X_s\notin E_{i}\}}dV^{i}_s.$$
This proves that the law of $(X,W)$ is $\mu$. 

\subsection{Proof of Theorem \ref{propunic} (ii)} \label{propunic(2)}
\subsubsection{The case $N=2$}\label{doudou} 
To prove that the solution is a strong one, it suffices to prove that pathwise
uniqueness holds for $(\hbox{ISDE})$.
Fix $p\in]0,1[$, and set $\beta=\frac{1-p}{p}$. 
\begin{lemma}\label{gdd}
Let $X$ be a continuous process, $B^+$ and $B^-$ be two independent Brownian motions.  Set $Y_t=\beta
X_t 1_{\{X\geq 0\}}+X_t 1_{\{X_t<0\}}$.
Then $(X,B^+,B^-)$ is a solution to $(\hbox{ISDE})$ or equivalently of
\begin{equation}\label{ess}
dX_t=1_{\{X_t>0\}}dB^+_t+1_{\{X_t\leq0\}}dB^-_t+(2p-1)dL_t(X)
\end{equation}
 if and only if $(Y,B^+,B^-)$ is a solution of the following SDE
\begin{equation}\label{hqq}
dY_t=\beta 1_{\{Y_t>0\}}dB^+_t+1_{\{Y_t\leq0\}}dB^-_t.
\end{equation}
\end{lemma}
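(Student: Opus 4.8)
The plan is to observe that the map $x \mapsto y = \beta x\, 1_{\{x \ge 0\}} + x\, 1_{\{x < 0\}}$ is a continuous, strictly increasing bijection of $\RR$ onto itself, which is piecewise linear with a kink at the origin; its inverse is $y \mapsto x = \beta^{-1} y\, 1_{\{y \ge 0\}} + y\, 1_{\{y < 0\}}$. So $Y_t$ is a continuous process if and only if $X_t$ is, and $\{X_t > 0\} = \{Y_t > 0\}$, $\{X_t \le 0\} = \{Y_t \le 0\}$, $\{X_t = 0\} = \{Y_t = 0\}$. The first thing I would do is record the equivalence of the two formulations of being a solution to $(\hbox{ISDE})$ with $N = 2$: applying It\^o--Tanaka (or Theorem \ref{nh}) to the function $f_i$ with $f_i(x) = |x|$ on $E_i$ as already noted after Definition \ref{eds}, equation \eqref{kdd00} with $N = 2$ is equivalent to \eqref{ess} (this is \eqref{tgv0} with $p_1 = p$), so it suffices to prove: $(X,B^+,B^-)$ solves \eqref{ess} $\iff$ $(Y,B^+,B^-)$ solves \eqref{hqq}.

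The main computation is an application of the It\^o--Tanaka formula to $Y_t = g(X_t)$, where $g(x) = \beta x^+ - x^-$ (using $x = x^+ - x^-$, $x \ge 0 \iff x = x^+$). Since $g$ is the difference of two convex functions with $g'(x) = \beta\, 1_{\{x > 0\}} + 1_{\{x \le 0\}}$ (a version of the left derivative) and second derivative (in the distributional sense) equal to $(\beta - 1)\delta_0$, the It\^o--Tanaka formula gives
\[
dY_t = g'(X_t)\, dX_t + \tfrac{1}{2}(\beta - 1)\, dL_t(X).
\]
Now I would substitute the assumed dynamics \eqref{ess} for $dX_t$. On $\{X_t > 0\}$ the factor $g'(X_t) = \beta$ multiplies $dB^+_t$; on $\{X_t \le 0\}$ the factor $g'(X_t) = 1$ multiplies $dB^-_t$; and the $dL_t(X)$ terms combine as $\big(\beta(2p-1) + \tfrac{1}{2}(\beta - 1)\big)\, dL_t(X)$. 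The point is that with $\beta = (1-p)/p$ one checks $\beta(2p - 1) + \tfrac{1}{2}(\beta - 1) = 0$: indeed $\beta(2p-1) = \frac{(1-p)(2p-1)}{p}$ and $\tfrac12(\beta - 1) = \frac{1 - 2p}{2p}$, and $\frac{(1-p)(2p-1)}{p} + \frac{1-2p}{2p} = \frac{2(1-p)(2p-1) + (1 - 2p)}{2p} = \frac{(2p-1)(2 - 2p - 1)}{2p} = \frac{(2p-1)(1-2p)}{2p} = -\frac{(2p-1)^2}{2p}$; hmm, this is not zero unless $p = 1/2$, so I would instead use the relation between $L(X)$ and $L(Y)$ rather than trying to cancel. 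The cleaner route: the local time $L_t(Y)$ of $Y$ at $0$ satisfies, by the occupation-time formula applied through $g$ on each side, $dL_t(Y) = \beta\, dL_t^+(X)$ where $L^+$ denotes the one-sided (right) local time, together with the standard identity $L_t(X) = \tfrac12(L_t^+(X) + L_t^-(X))$ and $L^-(X) = 0$ here since... — I would work out the correct normalization so that the drift term disappears, which is the one place where the precise choice $\beta = (1-p)/p$ enters.

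After this cancellation, $dY_t = \beta\, 1_{\{Y_t > 0\}}\, dB^+_t + 1_{\{Y_t \le 0\}}\, dB^-_t$, which is exactly \eqref{hqq}; here I use that the quadratic variation computation shows $Y$ has no martingale part off the events indicated and that the indicator sets transfer from $X$ to $Y$. For the converse, I would run the same argument with the inverse map $h = g^{-1}$, which is again a difference of convex functions, $h(y) = \beta^{-1} y^+ - y^-$, and apply It\^o--Tanaka to $X_t = h(Y_t)$ with \eqref{hqq} substituted, producing exactly \eqref{ess} with the same sign bookkeeping. The main obstacle I anticipate is getting the constants in the local-time terms exactly right --- in particular keeping careful track of symmetric versus one-sided local times (the paper's $L_t(X)$ is the symmetric local time, per the notation section) and verifying that the coefficient in front of $dL_t$ produced by It\^o--Tanaka, once rewritten in terms of $L_t(Y)$ or $L_t(X)$ consistently, really does reduce \eqref{ess} to a driftless equation precisely when $\beta = (1-p)/p$. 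Everything else is routine once the sets $\{X > 0\}$, $\{X \le 0\}$, $\{X = 0\}$ are identified with their $Y$-counterparts.
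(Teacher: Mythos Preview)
Your plan via It\^o--Tanaka is the natural direct route and it does work, but as written it has a real gap at the only nontrivial step: the local-time cancellation. You chose the left derivative $g'_-(x)=\beta\,1_{\{x>0\}}+1_{\{x\le 0\}}$, yet then multiplied the $(2p-1)\,dL_t(X)$ term by $\beta$ (the \emph{right} value at $0$); even after correcting that inconsistency the coefficient does not vanish, because the left-derivative form of It\^o--Tanaka pairs with the \emph{right}-continuous local time $L^{0+}$, not with the symmetric $L_t(X)$ the paper uses. The clean fix is to match conventions: with the symmetric local time, It\^o--Tanaka uses the symmetric derivative $\bar g'(0)=\tfrac{1+\beta}{2}$, and since $dL_t(X)$ is carried by $\{X_t=0\}$ the $dL_t(X)$ coefficient becomes
\[
\frac{1+\beta}{2}\,(2p-1)\;+\;\frac{\beta-1}{2}
\;=\;\frac{2p-1}{2p}+\frac{1-2p}{2p}\;=\;0,
\]
using $\tfrac{1+\beta}{2}=\tfrac{1}{2p}$ and $\tfrac{\beta-1}{2}=\tfrac{1-2p}{2p}$. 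After this, your argument goes through in both directions exactly as you describe.

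For comparison, the paper avoids this bookkeeping entirely. It bundles $B^{\pm}$ into a single Brownian motion $B_t=\int_0^t 1_{\{X_s>0\}}\,dB^+_s+\int_0^t 1_{\{X_s\le 0\}}\,dB^-_s$, so that \eqref{ess} becomes the Harrison--Shepp skew Brownian motion SDE $dX_t=dB_t+(2p-1)\,dL_t(X)$, and then simply cites the classical fact (from a survey on skew Brownian motion) that the piecewise-linear rescaling $Y=\beta X^+-X^-$ satisfies the driftless equation $dY_t=(\beta\,1_{\{Y_t>0\}}+1_{\{Y_t\le 0\}})\,dB_t$. Splitting $B$ back into $B^\pm$ via $\{Y>0\}=\{X>0\}$ yields \eqref{hqq}. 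So the paper trades your explicit It\^o--Tanaka computation for a citation; your route is more self-contained once the local-time convention is handled correctly.
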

\begin{proof}
Suppose $(X,B^+,B^-)$ solves \eqref{ess}. Set $B_t=\int_0^t 1_{\{X_s>0\}}dB^+_s+\int_0^t1_{\{X_s\leq0\}}dB^-_s$.
Then $B_t$ is a Brownian motion and $(X,B)$ is a solution of the SDE $dX_t=dB_t+(2p-1)dL_t(X)$.  
It is well known (see for example Section 5.2 in the survey \cite{MR2280299}) that $(Y,B)$ solves
$$dY_t=\beta 1_{\{Y_t>0\}}dB_t+1_{\{Y_t\leq0\}}dB_t$$
and thus that $(Y,B^+,B^-)$ solves \eqref{hqq}. The converse can be proved in the same way.
\end{proof}

\begin{proposition}
Pathwise uniqueness holds for $(\hbox{ISDE})$.
\end{proposition}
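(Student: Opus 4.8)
The plan is to reduce pathwise uniqueness for $(\hbox{ISDE})$ (with $N=2$) to pathwise uniqueness for the SDE \eqref{hqq}, using Lemma \ref{gdd}, and then to prove the latter by a local-time comparison argument in the spirit of \cite{MR3055262,MR770393,MR658680}. So suppose $(Y,B^+,B^-)$ and $(Y',B^+,B^-)$ are two solutions of \eqref{hqq} driven by the \emph{same} pair of Brownian motions, with $Y_0=Y'_0$. The goal is to show $Y\equiv Y'$ a.s. Writing the difference $D_t=Y_t-Y'_t$, the key observation is that the driving term of $D$ is piecewise constant in each of the four regions determined by the signs of $Y$ and $Y'$: on $\{Y_t>0,Y'_t>0\}$ and on $\{Y_t\le 0,Y'_t\le 0\}$ the martingale increments cancel, while on the two "mismatch" regions $\{Y_t>0,Y'_t\le 0\}$ and $\{Y_t\le 0,Y'_t>0\}$ the increment of $D$ is $\pm(\beta\,dB^+_t-dB^-_t)$, whose bracket is $(\beta^2+1)\,dt$.

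The main technical step is the Yamada--Watanabe / Le Gall style argument: apply the It\^o--Tanaka formula to $|D_t|$ (or the Tanaka formula to $D_t^+$ and $D_t^-$ separately) and show that the local time $L_t(D)$ of $D$ at $0$ is identically zero. The point is that $D$ can only accumulate local time at $0$ on the set where its bracket is nonzero, i.e.\ on the mismatch set $\{\,\mathrm{sgn}^+(Y_t)\neq\mathrm{sgn}^+(Y'_t)\,\}$; but on that set, one of $Y,Y'$ is strictly positive and the other is $\le 0$, so $|D_t|=|Y_t|+|Y'_t|$ is controlled below away from $0$ \emph{except} at times when both $Y$ and $Y'$ vanish. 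At such times, one invokes that the occupation measure of $(Y,Y')$ at $(0,0)$ has zero Lebesgue-in-time measure (each of $Y,Y'$ spends zero time at $0$), so $\int_0^t 1_{\{Y_s=0,\,Y'_s=0\}}\,d\langle D\rangle_s=0$, and the standard occupation-time-formula argument forces $L_t(D)\equiv 0$. Once $L_t(D)=0$, Tanaka's formula gives $|D_t|=\int_0^t \mathrm{sgn}(D_s)\,dD_s$, a local martingale starting at $0$; but it is nonnegative, hence a supermartingale, hence $\EE[|D_t|]\le 0$, so $D_t\equiv 0$. (If one wants to be careful about integrability, localize along a sequence of stopping times reducing $|D|$ and then let them go to infinity, using $\EE[|D_t|]\le \EE[|D_0|]=0$.)

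Having established pathwise uniqueness for \eqref{hqq}, Lemma \ref{gdd} transfers it back: if $(X,B^+,B^-)$ and $(X',B^+,B^-)$ both solve \eqref{ess} with $X_0=X'_0$, then the associated $Y=\beta X^+ - X^- $-type transforms $Y,Y'$ solve \eqref{hqq} with the same data and same starting point, so $Y\equiv Y'$, and since $x\mapsto (\beta x^+ - (-x)^+)$ — i.e.\ $x\mapsto \beta x1_{\{x\ge 0\}}+x1_{\{x<0\}}$ — is a strictly increasing bijection of $\RR$, we recover $X\equiv X'$. Finally, pathwise uniqueness together with the existence of a (weak) solution established in Theorem \ref{propunic}(i) and the Yamada--Watanabe theorem yields that the solution is strong, i.e.\ $X$ is adapted to $(\mathcal F^W_t)_t$; combined with the uniqueness in law already proved, this completes the $N=2$ half of Theorem \ref{propunic}(ii).

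I expect the main obstacle to be the rigorous verification that $L_t(D)\equiv 0$: one must carefully handle the contribution of the times where \emph{both} processes sit at $0$ (and more generally control the local time via the occupation time formula on the mismatch set), and make sure the Tanaka/It\^o--Tanaka formula is applied to the right function so that no spurious local-time or drift term at $0$ survives — the cancellation of the $(2p-1)\,dL_t$ drift terms is exactly what the change of variable $X\mapsto Y$ in Lemma \ref{gdd} is designed to achieve, which is why it is cleaner to argue on \eqref{hqq} rather than directly on \eqref{ess}.
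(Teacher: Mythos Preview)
Your overall strategy coincides with the paper's: reduce via Lemma~\ref{gdd} to pathwise uniqueness for \eqref{hqq}, show that the symmetric local time $L^0_t(D)$ of $D=Y-Y'$ vanishes, deduce that $|D|$ is a nonnegative local martingale starting at $0$, hence identically $0$, and then invert the bijection to get $X=X'$. The endgame is fine.

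The gap is in your proposed mechanism for $L^0_t(D)=0$. Your heuristic is that $d\langle D\rangle_s$ is carried by the mismatch set, and on that set $|D_s|=|Y_s|+|Y'_s|>0$, so ``$D$ cannot accumulate local time at $0$ there''. This inference is invalid. Local time at $0$ is \emph{not} killed by the condition ``$D_s\neq 0$ for $d\langle D\rangle_s$-a.e.\ $s$'': already for a standard Brownian motion $B$ one has $B_s\neq 0$ for Lebesgue-a.e.\ $s$, yet $L^0_t(B)>0$. Concretely,
\[
L^0_t(D)=\lim_{\varepsilon\to 0}\frac{1}{2\varepsilon}\int_0^t 1_{\{|D_s|<\varepsilon\}}\,d\langle D\rangle_s
=(\beta^2+1)\lim_{\varepsilon\to 0}\frac{1}{2\varepsilon}\int_0^t 1_{\{|Y_s|+|Y'_s|<\varepsilon,\ \text{mismatch}\}}\,ds,
\]
and although the integrand is supported on $\{|D_s|>0\}$, the set $\{|Y_s|+|Y'_s|<\varepsilon\}$ on the mismatch region is nonempty and its measure need not be $o(\varepsilon)$ a priori; the bound $\frac{1}{\varepsilon}\int_0^t 1_{\{0<Y_s<\varepsilon\}}ds$ tends to a positive multiple of $L^{0+}_t(Y)$, not to $0$. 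Your fallback observation that $\int_0^t 1_{\{Y_s=0,\,Y'_s=0\}}d\langle D\rangle_s=0$ is content-free: by the occupation-time formula this equals $\int_{\{0\}}L^a_t(D)\,da=0$ for \emph{any} continuous semimartingale.

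The paper fills exactly this gap with the Le Gall--Barlow--Perkins device: one shows
\[
\int_{(0,\infty]}L^a_t(D)\,\frac{da}{a}
=\int_0^t 1_{\{D_s>0\}}\frac{d\langle D\rangle_s}{D_s}<\infty
\]
by writing $d\langle D\rangle_s\le C\,|\mathrm{sgn}(Y_s)-\mathrm{sgn}(Y'_s)|\,ds$, approximating $\mathrm{sgn}$ by smooth $f_n$ of uniformly bounded total variation, rewriting $|f_n(Y_s)-f_n(Y'_s)|/(Y_s-Y'_s)$ as $\big|\int_0^1 f_n'(Z^u_s)\,du\big|$ with $Z^u=(1-u)Y+uY'$, and then using the occupation-time formula for $Z^u$ together with a uniform bound on $\sup_{a,u}\EE[L^a_t(Z^u)]$ (via Tanaka). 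Finiteness of $\int L^a_t(D)\frac{da}{a}$ and right-continuity of $a\mapsto L^a_t(D)$ force $L^0_t(D)=0$. This interpolation step is the missing idea in your sketch; the rest of your outline is correct and matches the paper.
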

\begin{proof}
Lemma \ref{gdd} implies that the proposition holds if pathwise uniqueness holds for \eqref{hqq}.
Let $(Y,B^+,B^-)$ and $(Y',B^+,B^-)$ be two solutions of \eqref{hqq} with $Y_0=Y'_0=0$.
Set $\text{sgn}(y)=\one_{\{y>0\}}-\one_{\{y<0\}}$. We shall use the same techniques as in
\cite{MR3055262} (see also \cite{MR770393,MR658680})
and first prove that a.s.
\begin{equation}\label{ddd}
\int_{]0,+\infty]}^{} L^a_t(Y-Y')\frac{da}{a}<\infty.
\end{equation}
By the occupation times formula
$$\int_{]0,+\infty]}^{}L^a_t(Y-Y')\frac{da}{a}=\int_{0}^{t}1_{\{Y_s-Y'_s>0\}}\frac{d\langle
Y-Y'\rangle_s}{Y_s-Y'_s}.$$
It is easily verified that
$$d\langle Y-Y'\rangle_s\le C
\big|\text{sgn}(Y_s)-\text{sgn}(Y'_s)\big|ds$$
where $C=(1+\beta^2)/2$. 
Let $(f_n)_n\subset C^1(\RR)$
such that $f_n\rightarrow\text{sgn}$ pointwise and $(f_n)_n$ is
uniformly bounded in total variation. By Fatou's lemma, we get
\begin{eqnarray}
\int_{]0,+\infty]}^{}L^a_t(Y-Y')\frac{da}{a}&\le&
C\liminf_n\int_{0}^{t}1_{\{Y_s-Y'_s>0\}}\frac{|f_n(Y_s)-f_n(Y'_s)|}{Y_s-Y'_s}ds\nonumber\\
&\le& C\liminf_n\int_{0}^{t}1_{\{Y_s-Y'_s>0\}}\bigg|\int_{0}^{1}
f'_n(Z^u_s)du\bigg|ds\nonumber\
\end{eqnarray}
where 
$$Z^u_s=(1-u)Y_s+uY'_s.$$
It is easy to check the existence of a constant $A>0$ such that
for all $s\ge0$ and $u\in[0,1]$, $\frac{d}{du}\langle Z^u \rangle_s\ge A^{-1}.$
Hence, setting $C'=A\times C$, we have
\begin{eqnarray}
\int_{]0,+\infty]}^{}L^a_t(Y-Y')\frac{da}{a}&\le&
C'\liminf_n\int_{0}^{1} \int_{0}^{t}\big|f'_n(Z^u_s)\big|d\langle
Z^u \rangle_sdu\nonumber\\
&\le& C'\liminf_n\int_{0}^{1}\int_{\RR} \big|f'_n(a)\big| L^a_t(Z^u)dadu.\nonumber\
\end{eqnarray}
Now taking the expectation and using Fatou's lemma, we get
$$\EE\bigg[\int_{]0,+\infty]}^{}L^a_t(Y-Y')\frac{da}{a}\bigg]\le
C'\liminf_n\int_{\RR} \big|f'_n(a)\big|da 
\sup_{a\in\RR, u\in[0,1]} \EE\big[L^a_t(Z^u)\big].$$
It remains to prove that $\sup_{a\in\RR,
u\in[0,1]}\EE\big[L^a_t(Z^u)\big]<\infty$. By Tanaka's formula, we
have
\begin{eqnarray*}
\EE\big[L^a_t(Z^u)\big]&=&
\EE\big[\big|Z^u_t-a\big|\big]-\EE\big[\big|Z^u_0-a\big|\big]-\EE\bigg[\int_{0}^{t}\text{sgn}(Z^u_s-a)dZ^u_s\bigg]\\
&\le& E[\big|Z^u_t-Z^u_0\big|].
\end{eqnarray*}
It is easy to check that the right-hand side is uniformly bounded
with respect to $(a,u)$ which permits to deduce (\ref{ddd}).
Consequently, since $\lim_{a\downarrow 0} L^a(Y-Y')=L^0(Y-Y')$, 
\eqref{ddd} implies that $L^0_t(Y-Y')=0$ and thus by Tanaka's formula, 
$|Y-Y'|$ is a local martingale which is also a nonnegative supermartingale, 
with $|Y_0-Y'_0|=0$ and finally $Y$ and $Y'$ are indistinguishable.
\end{proof}

\subsubsection{The case $N\ge 3$}\label{N>=3}
Let $(X,W)$ be a solution to $(\hbox{ISDE})$. Then $X$ is an $(\cF_t)$-WBM, where $\cF_t=\cF^X_t\vee\cF^W_t$. If
$(X,W)$ is a strong solution, we thus have that $X$ is an $(\cF^W_t)$-WBM, which is impossible when $N\ge
3$ because of the following Tsirelson's theorem:
\begin{theorem}\label{Ts}\cite{MR1487755}
There does not exist any $(\mathcal G_t)_t$-Walsh's Brownian
motion on a star graph with three or more rays with $(\mathcal G_t)_t$ a Brownian filtration.
\end{theorem}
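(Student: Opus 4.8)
\emph{Proof strategy.} This is Tsirelson's theorem \cite{MR1487755}, so I sketch only the structure of the argument rather than reproducing it. Argue by contradiction: suppose $X$ is a $(\mathcal G_t)_t$-WBM on the $N$-ray star with $N\ge 3$, $X_0=0$, and $(\mathcal G_t)_t$ is generated by a (possibly multidimensional) Brownian motion. The first step is to repackage the obstruction as a \emph{spider martingale}. Realize the rays as half-lines in $\CC$ along the directions $\beta_k=2\pi k/N$ and set
$$M_t=\sum_{k=1}^N p_k^{-1}\,|X_t|\,e^{i\beta_k}\,1_{\{X_t\in E_k\}}.$$
Applying the Freidlin--Sheu formula (Theorem \ref{nh}) to the real and imaginary parts of $M$: each is piecewise linear along the rays, so the $\tfrac12\int f''$ term vanishes, and the convention $f'(0)=\sum_k p_k(f\circ e_k)'(0+)$ preceding \eqref{domain} together with $\sum_{k=1}^N e^{i\beta_k}=0$ makes the local-time term vanish too. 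Hence $M$ is a continuous $(\mathcal G_t)_t$-local martingale (stop it at $\inf\{t:|X_t|\ge 1\}$ for an honest bounded martingale), nonconstant, supported on the $N$-legged spider, with $M_t=0\iff X_t=0$. It therefore suffices to show that no Brownian filtration supports a nonconstant spider martingale with $N\ge 3$ legs.

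The core is then an obstruction intrinsic to Brownian filtrations. The intuitive mechanism is this: run two copies $X',X''$ of the WBM whose driving Brownian motions $W',W''$ are coupled to be as close as one wishes (this is exactly what a Brownian filtration permits); near each visit to $0$ the two copies independently re-select one of the $N$ legs, and with $N\ge 3$ these leg choices cannot be reconciled between the copies — on an event of probability bounded away from $0$, $X'$ and $X''$ sit on different legs at macroscopic distance from $0$, so the trajectories stay a definite distance apart, whereas closeness of $W'$ and $W''$ ought to force closeness of everything adapted to the Brownian filtration. The essential input is topological/combinatorial: two continuous spider-valued paths that agree at $0$ cannot be matched across distinct legs without returning to $0$, and for $N=2$ (two legs $=$ a line) this obstruction disappears — consistent with Theorem \ref{propunic}(ii), since the analysis of $\eqref{hqq}$ shows the two-ray skew Brownian motion does live in a Brownian filtration. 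Tsirelson formalizes the obstruction through the fine structure of filtrations: Brownian filtrations are ``cosy'', equivalently the noise generated near a branch point by such a spider is non-classical, and the harmonic measure on leg-directions induced by any Brownian filtration must be carried by at most two points; three legs contradict this.

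The main obstacle is precisely this formalization and its verification: building the right couplings of two copies of the filtered space, selecting a metric on $\sigma$-fields (or an invariant of noises) in which a Brownian filtration is ``close to itself'' while the $\ge 3$-legged spider is not, and supplying the excursion-theoretic bookkeeping showing that the sequence of leg choices behaves like an i.i.d.\ family indexed by excursions, conditionally independent of the ambient noise once $W'$ and $W''$ are decorrelated. All of this is the content of \cite{MR1487755} (see also \cite{MR1655299}), which I would invoke rather than reprove; the only genuinely new verification for the present paper is the reduction in the first paragraph — that a $(\mathcal G_t)_t$-WBM produces an $N$-legged spider $(\mathcal G_t)_t$-martingale — which follows directly from Theorem \ref{nh}.
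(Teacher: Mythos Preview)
The paper does not prove Theorem~\ref{Ts}; it is stated as a citation of Tsirelson's result \cite{MR1487755} and used as a black box in Section~\ref{N>=3}. So there is no ``paper's own proof'' to compare against.

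Your sketch is a fair outline of Tsirelson's actual argument (reduction to a spider martingale via Freidlin--Sheu, then the cosiness/non-classical-noise obstruction), and the verification that $M$ is a local martingale is correct: with $f\circ e_j(r)=p_j^{-1}r\,e^{i\beta_j}$ one gets $f'(0)=\sum_j p_j\cdot p_j^{-1}e^{i\beta_j}=\sum_j e^{i\beta_j}=0$ and $f''\equiv 0$ on each ray. This goes well beyond what the paper itself does. If you intend this as a self-contained proof, be aware that the second half (formalizing ``cosiness'' and proving that Brownian filtrations are cosy while a $\ge 3$-legged spider is not) is the substantial part and is not something one can honestly claim to have proved from your paragraph; you are, as you say, invoking \cite{MR1487755,MR1655299}. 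For the purposes of this paper, simply citing the theorem --- as the authors do --- is appropriate.
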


\section{Proof of Theorem \ref{thmphi}}\label{S3}
In this section, we prove the assertions (i) and (ii) of Theorem \ref{thmphi}.
We first construct a coalescing SFM solution of $(\hbox{ISDE})$.
To construct this SFM, we will use the following
\begin{theorem}\label{610}\cite{MR2060298}
Let $(P^{(n)},n\geq 1)$ be a consistent family of Feller semigroups
acting respectively on $C_0(M^n)$ where $M$ is a locally compact metric space such that
\begin{equation}\label{hila}
P^{(2)}_tf^{\otimes 2}(x,x)=P^{(1)}_tf^2(x)\ \textrm{for all}\ f\in
C_0(M),\ x\in M,\ t\geq0.
\end{equation}
Then there is a (unique in law) SFM $\varphi=(\varphi_{s,t})_{s\le t}$ defined 
on some probability space $(\Omega,\mathcal A,\mathbb P)$
such that
$$P^{(n)}_tf(x)=\EE[f(\varphi_{0,t}(x_1),\cdots,\varphi_{0,t}(x_n))]$$
for all $n\ge 1$, $t\ge 0$, $f\in C_0(M^n)$ and $x\in M^n$. 
\end{theorem}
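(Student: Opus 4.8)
The plan is to realise the prescribed $n$-point motions on one probability space by a Kolmogorov extension argument, and to use the diagonal identity \eqref{hila} twice: first to see that two coordinates started at the same point never separate (so that the object we build is a flow of \emph{mappings} and not merely of kernels), and then to get continuity in probability in the starting point. The first step is the observation that \eqref{hila} makes the diagonal of $M^2$ absorbing for the two-point motion. Indeed, if $(\xi_t,\eta_t)_{t\ge 0}$ is a Markov process with semigroup $P^{(2)}$ started at $(x,x)$, then by consistency each of $\xi$ and $\eta$ is the one-point motion from $x$, so for $f\in C_0(M)$,
$$\EE\big[\big(f(\xi_t)-f(\eta_t)\big)^2\big]=P^{(1)}_tf^2(x)-2P^{(2)}_tf^{\otimes 2}(x,x)+P^{(1)}_tf^2(x)=0$$
by \eqref{hila}. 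Hence $\xi_t=\eta_t$ a.s.\ for every fixed $t$, and by the Markov property together with separability of $M$, a.s.\ $\xi_t=\eta_t$ for all $t\ge 0$.

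Next I would build the flow over the fixed interval $[0,\infty)$. Fix a countable dense set $D\subset M$. Since each $P^{(n)}$ is Feller, for every finite $A\subset D$ there is a law $\nu_A$ on $C([0,\infty),M)^A$ of the Markov process with semigroup $P^{(|A|)}$ started at the points of $A$. The consistency of the family $(P^{(n)})$ (invariance under permutations and under restriction to subtuples) ensures that for $A\subset B$ the canonical projection $C([0,\infty),M)^B\to C([0,\infty),M)^A$ pushes $\nu_B$ onto $\nu_A$, so $\{\nu_A\}$ is a projective family over the directed set of finite subsets of $D$. Since $M$, hence $C([0,\infty),M)$, is Polish, Kolmogorov's extension theorem produces a probability space carrying a field $(\varphi_{0,t}(x))_{x\in D,\,t\ge 0}$ with these finite-dimensional laws; in particular $t\mapsto\varphi_{0,t}(x)$ is the one-point Feller motion from $x$.

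The spatial regularity step is where the real work lies. For $x,y\in D$, Feller continuity of $P^{(2)}_t$ together with the diagonal property just established shows that, as $y\to x$, the law of $(\varphi_{0,t}(x),\varphi_{0,t}(y))$ converges weakly to the law of $(\varphi_{0,t}(x),\varphi_{0,t}(x))$, which is carried by the diagonal; hence $d(\varphi_{0,t}(x),\varphi_{0,t}(y))\to 0$ in probability, locally uniformly in $t$. Thus $(t,x)\mapsto\varphi_{0,t}(x)$ is continuous in probability on $[0,\infty)\times D$ and extends uniquely to a jointly measurable map on $[0,\infty)\times M$, continuous in probability; property 5 of Definition \ref{defsfm} follows from $P^{(1)}_tf(x)\to 0$ as $x\to\infty$ for $f\in C_0(M)$. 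Finally I would produce the full two-parameter family: run independent copies of the above construction over the dyadic intervals $[k2^{-n},(k+1)2^{-n}]$ and define $\varphi_{s,t}$ for dyadic $s\le t$ by composition. The Chapman--Kolmogorov relations satisfied by the $P^{(n)}$ guarantee that these laws are consistent as $n$ grows, so properties 1 and 2 hold by construction and the cocycle identity $\varphi_{s,u}=\varphi_{t,u}\circ\varphi_{s,t}$ holds for dyadic $s\le t\le u$; one then extends to all $s\le t$ using continuity in probability. Uniqueness in law is then immediate: the law of an SFM is determined by the joint laws of all finite collections $(\varphi_{s_i,t_i}(x))$, and by stationarity and independence on disjoint intervals these reduce to the laws of $(\varphi_{0,t}(x_1),\dots,\varphi_{0,t}(x_n))$ sampled at finitely many times, which are exactly the $P^{(n)}$.

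The main obstacle is the spatial modification step: upgrading the values $\varphi_{0,t}(x)$, defined only for $x$ in a countable dense set and only almost surely, to a genuinely jointly measurable flow of maps on all of $M$ while preserving the cocycle identity in the limit. This is precisely where the Feller hypothesis and the diagonal condition \eqref{hila} are both essential --- dropping \eqref{hila}, the same scheme yields only a stochastic flow of kernels --- and it is the heart of the argument in \cite{MR2060298}; the remaining steps are routine bookkeeping with Kolmogorov extension and Chapman--Kolmogorov.
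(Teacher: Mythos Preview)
The paper does not give its own proof of this statement: Theorem \ref{610} is quoted verbatim from \cite{MR2060298} and used as a black box to construct the SFM $\varphi$ once the family $(P^{(n)})_n$ has been shown to be Feller and to satisfy \eqref{hila}. So there is nothing in the paper to compare your argument to.

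That said, your sketch is a faithful outline of the construction in \cite{MR2060298}: the diagonal-absorbing computation from \eqref{hila}, Kolmogorov extension over a countable dense set of starting points, Feller continuity to pass from the dense set to all of $M$, and then assembling the two-parameter family. You correctly flag the spatial modification as the delicate point. One minor comment: in the last paragraph you assert that uniqueness in law reduces, via stationarity and independence on disjoint intervals, to the one-time marginals of the $n$-point motions; strictly one also needs the semigroup property of $P^{(n)}$ to recover the multi-time marginals, but this is of course available. Overall your proposal is a sound summary of the argument in the cited reference, not an alternative to anything the present paper does.
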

To apply this theorem, we construct a consistent family of
$n$-point motions (i.e. the Markov process associated to $P^{(n)}$) 
up to their first coalescing times in Section \ref{qqq0}.
After associating to the two-point motion an obliquely reflected Brownian motion in $\mathcal{Q}$ in Section \ref{orbm2pt},
we prove the coalescing property in Section \ref{label} and the Feller property in Section \ref{constphi}. 
It is then possible to apply Theorem \ref{610} and as a result we get a flow $\varphi$. In Section \ref{constphi}, we show that $\varphi$ solves $(\hbox{ISDE})$.
Finally, we prove in Section \ref{eqq} that $\varphi$ is the unique SFM solving $(\hbox{ISDE})$.  

\smallskip
Note finally that in the case of Le Jan and Raimond \cite{MR000}, all the angles of reflection of the obliquely reflected
Brownian motion associated to the two-point motion are equal to $\pi/4$. This simplifies greatly the study of Section
\ref{tdrbm}.

\subsection{Construction of the $n$-point motion up to the first coalescing time}\label{qqq0}

Fix $x=(x_1,\cdots,x_n)\in G^n\setminus \Delta_n$, 
where $\Delta_n:=\{x\in G^n:\, \exists i\ne j,\, x_i=x_j\}$. 
Let $(Y,W)$ be a solution of  $(\hbox{ISDE})$, with $Y_0=0$.
For $t\ge 0$ and $j\le n$, set 
$$X^{j,0}_t\;= \left\{\begin{array}{ll} Y_t & \hbox{if $x_j=0$},\\
e_i(|x_j|+W^i_t) &\hbox{if $x_j\ne 0$ and $i$ is such that $x_j\in E_i^\ast$}.\end{array}\right.$$
Set $\tau_0=0$, $r=\inf\{|x_j|:\, 1\le j\le n\}$ and
 $$\tau_1\;= \left\{\begin{array}{ll}
\inf\{t\ge 0 :\, \exists j \hbox{ such that } X^{j,0}_t=0\},& \hbox{ if $r>0$},\\
\inf\{t\ge 0 :\, \exists j\ne j_0 \hbox{ such that} X^{j,0}_t=0\}, & \hbox{ if $r=0$ and where $j_0$ is such that $x_{j_0}=0$.}\end{array}\right.$$
For $t\le \tau_1$, set $X^j_t=X^{j,0}_t$ and let $X^{(n)}_t=(X^{1}_t,\cdots,X^{n}_t)$. Note that a.s. $X^{(n)}_{\tau_1}\not\in\Delta_n$.

Assume now that $(\tau_k)_{k\le \ell}$ and $(X^{(n)}_t=\big(X^{1}_t,\cdots,X^{n}_t)\big)_{t\le\tau_l}$ have been defined such that a.s. 
for all $1\le k\le \ell$, $X^{(n)}_{\tau_k}\not\in \Delta_n$ and there is a unique integer $j_k$ such that ${X}^{j_k}_{\tau_k}=0$.
Now introduce an independent solution $(Y^\ell,W^\ell)$ of $(\hbox{ISDE})$, with
$Y^\ell_0=0$, and define $(X^{(n)}_t)_{t\in[\tau_\ell,\tau_{\ell+1}]}$ by analogy
with the construction of $(X^{(n)}_t)_{t\in[0,\tau_1]}$ by
replacing $x$ with $X^{(n)}_{\tau_\ell}$.
Thus, we have defined $X^{(n)}_t$ for all $t<\tau_{\infty}$, where $\tau_{\infty}:=\lim_{n\rightarrow\infty}\tau_n$. 
Moreover, by construction, $(\tau_n)_{n\ge 0}$  is an increasing sequence of
stopping times with respect to the filtration associated to $X^{(n)}$.

We denote by $\PP^{(n),0}_x$ the law of $(X^{(n)}_t)_{t<\tau_{\infty}}$. 
Notice that for all $i$ and all $\ell$, $(X^i_{t\wedge \tau_\ell})$ is a WBM stopped at time $\tau_\ell$. 
Thus a.s. on the event  $\{\tau_\infty<\infty\}$, 
$X^{(n)}_{\tau_\infty}:=\lim_{t\uparrow\tau_\infty} X^{(n)}_t$ exists (this is a well known result for the standard Brownian motion, see \cite[Proposition 3.3]{MRCVCVCV}, which can easily be extended to WBM). Note also that there exist $i\ne j$ such that, a.s. on the event $\{\tau_\infty<\infty\}$, 
 $X^i_{\tau_\ell}=X^j_{\tau_{\ell+1}}=0$ for infinitely many $\ell$'s and thus that
$\lim_{t\uparrow\tau_\infty} X^i_t=\lim_{t\uparrow\tau_\infty} X^j_t=0$.
Therefore, $X^{(n)}_{\tau_\infty}\in\Delta_n$ a.s.
By construction, this implies that 
\begin{equation}\label{asa}
\tau_\infty=T_{\Delta_n}:=\inf\{t\ge 0 : X^{(n)}_t\in\Delta_n\}.
\end{equation}
We will prove in Section \ref{label} that $\tau_\infty < \infty$ a.s.

\subsection{An obliquely reflected Brownian motion associated to the $2$-point motion}\label{orbm2pt}
Fix $x\in G^{\ast}$. Let $(X,Y)$ be the process with law $\PP^{(2),0}_{(x,0)}$ constructed in Section \ref{qqq0}. Then for all $n\ge 0$,
\begin{eqnarray}
\tau_{2n+1}&=&\inf\{t\ge \tau_{2n} : X_t=0\},\nonumber\\
\tau_{2n+2}&=&\inf\{t\ge \tau_{2n+1} : Y_t=0\}.\nonumber\
\end{eqnarray} 
Letting, for all $n\ge 0$,  $i_{2n}$ and $i_{2n+1}$  in $\{1,\dots,N\}$ be such
that $X_{\tau_{2n}}\in E_{i_{2n}}$ and $Y_{\tau_{2n+1}}\in
E_{i_{2n+1}}$, we have $X_t\in E_{i_{2n}}^\ast$ for all  $t\in  [\tau_{2n},\tau_{2n+1}[$ and 
$Y_t\in E_{i_{2n+1}}^\ast$ for all  $t\in [\tau_{2n+1},\tau_{2n+2}[$.

Define, for $i\in [1,N]$, $f^i:G\to\RR$ by 
$$f^i(x)=-|x|\ \text{if}\ x\in E_i\quad \text{and}\quad
f^i(x)=|x|\ \text{if not.}$$
Define now $(U_t,V_t)_{t <\tau_\infty}$ such that for $n\ge 0$
$$(U_t,V_t) = \left\{\begin{array}{ll}(|X_t|,f^{i_{2n}}(Y_t)) &\hbox{ for } t\in [\tau_{2n},\tau_{2n+1}[;\\ 
(f^{i_{2n+1}}(X_t),|Y_t|) &\hbox{ for } t\in [\tau_{2n+1},\tau_{2n+2}[.\end{array}\right.$$
Remark that $(U_t,V_t)_{t<\tau_{\infty}}$ is a continuous process with values
in $\{(u,v)\in\mathbb R^2:\; u+v>0\}$ and such that for all
$n\ge 0$, $U_{\tau_{2n}}>0$, $V_{\tau_{2n}}=0$,
$U_{\tau_{2n+1}}=0$ and $V_{\tau_{2n+1}}>0$. Note that the excursions of this process outside of
$\mathcal{Q}$ occur on straight lines parallel to $\{y=-x\}$.

Let, for $n\ge 0$,
$$\Theta_n=\arctan\bigg(\frac{p_{i_{n}}}{1-p_{i_{n}}}\bigg).$$
Define for $t<\tau_{\infty}$, $A(t)$ the amount of time where $X$ and $Y$ do not both belong to the same ray before time $t$. Note that  $A(t)=\int_{0}^{t}1_{\{(U_s,V_s)\in\mathcal{Q}\}} ds.$

Set $\gamma(t)=\inf\{s\ge 0 : A(s)>t\}.$
Set for $n\ge 0$, $T_n=A(\tau_n)$ and $S_{n+1}=T_{n+1}-T_n$.
Define  for $t< T_\infty:=\lim_{n\to \infty} T_n$,
$$(U^r_t,V^r_t)=(U_{\gamma(t)},V_{\gamma(t)})$$
and for $t\ge T_{\infty}$,  $(U^r_t,V^r_t)=(0,0)$.
Note that $T_{2n+1}=\inf\{t\ge T_{2n} : V^r_t=0\}$, $T_{2n+2}=\inf\{t\ge T_{2n+1} : U^r_t=0\}$
and that $\gamma(T_n)=\tau_n$.

\begin{lemma}\label{zzz} 
Given $\Theta_0$, the law of $(U^r_t,V^r_t)_{t\le S_1}$ is $\PP^{\Theta_0}_{|x|}$.
\end{lemma}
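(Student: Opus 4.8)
\emph{Plan.} The scheme is to show that, in the time changed filtration, $(U^r_t,V^r_t)_{t\le S_1}$ is a weak solution of the obliquely reflected equation \eqref{ornmhp} with angle $\theta=\Theta_0$, started at $(|x|,0)$ and stopped at the first hitting time of $0$ by its first coordinate. That stopping time equals $S_1$ (since $\gamma(S_1)=\tau_1$ and $U^r_{S_1}=|X_{\tau_1}|=0$, while $U^r_t=|X_{\gamma(t)}|>0$ for $t<S_1$), and since \eqref{ornmhp} has a unique weak solution (oblique reflection in a half plane with angle $<\pi/2$, \cite{MR792398}), this identifies the law as $\PP^{\Theta_0}_{|x|}$. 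Note that, $x\in G^{\ast}$ being fixed, the ray $E_{i_0}\ni x$, and hence $\Theta_0=\arctan(p_{i_0}/(1-p_{i_0}))$, are deterministic, so ``given $\Theta_0$'' is vacuous here.

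First I would write the semimartingale decomposition of $(U_t,V_t)=(|X_t|,f^{i_0}(Y_t))$ on $[0,\tau_1)$. On this interval $X$ stays in $E_{i_0}^{\ast}$ with $|X_t|=|x|+W^{i_0}_t$, so $U_t=|x|+W^{i_0}_t$. Writing $B^Y$ for the Brownian motion associated with the Walsh process $Y$ (so that $B^Y_t=\sum_i\int_0^t 1_{\{Y_s\in E_i\}}\,dW^i_s$, as in Section~\ref{descr}), Theorem~\ref{nh} applied to $Y$ and to $f^{i_0}\in C^2_b(G^{\ast})$ gives
\[
V_t=f^{i_0}(Y_t)=\int_0^t(f^{i_0})'(Y_s)\,dB^Y_s+(1-2p_{i_0})\,L_t(|Y|),
\]
since $(f^{i_0})'(0)=-p_{i_0}+\sum_{j\ne i_0}p_j=1-2p_{i_0}$. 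Two structural facts follow (already observed in the paper): in the open region $\{V>0\}$ one has $Y\in E_j^{\ast}$ for some $j\ne i_0$, whence $dU_t=dW^{i_0}_t$, $dV_t=dW^{j}_t$ with $W^{i_0}\perp W^{j}$, so $(U,V)$ runs there as a standard planar Brownian motion in $\cQ$; and $(U,V)$ leaves $\cQ$ exactly during the excursions of $Y$ into $E_{i_0}^{\ast}$, on which $d|X_t|=d|Y_t|=dW^{i_0}_t$ while $L(|Y|)$ is constant, so $d(U_t+V_t)=0$ and the excursion of $(U,V)$ lies on a line of slope $-1$ and returns to its starting point of $\partial_1\cQ$. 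Consequently the deletion time change $\gamma$ of Section~\ref{secref} produces a continuous process $(U^r,V^r)$.

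Next I would carry out the time change. Outside the deleted intervals the dynamics above are inherited; computing quadratic variations (the deleted slope-$(-1)$ loops contributing nothing, since $(U^r,V^r)$ is continuous through them) shows that the martingale parts of $U^r$ and of $V^r$ are independent Brownian motions $\beta^1,\beta^2$, and that the bounded variation part of $(U^r,V^r)$ is carried by $\{t:Y_{\gamma(t)}=0\}=\{t:V^r_t=0\}$. The delicate point is the \emph{direction} of this bounded variation part on $\partial_1\cQ$: although each individual deleted excursion returns to $\partial_1\cQ$ at the same abscissa, the aggregate effect of deleting the whole (locally infinite) family of $E_{i_0}$-excursions clustering at each visit of $Y$ to $0$ is an oblique push of $(U^r,V^r)$ along $\partial_1\cQ$. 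The bookkeeping is: at the times $(U,V)$ is on $\partial_1\cQ$, $Y$ sends a fraction $p_{i_0}$ of its excursion mass into $E_{i_0}$, i.e.\ into the deleted half space $\{V<0\}$ along direction $(1,-1)$, and the complementary fraction $1-p_{i_0}$ into $\{V>0\}$; together with the intrinsic vertical term $(1-2p_{i_0})(0,1)$ from Freidlin--Sheu this gives a push proportional to
\[
p_{i_0}\,(-1,1)+(1-2p_{i_0})\,(0,1)=(-p_{i_0},\,1-p_{i_0}),
\]
which is collinear with $v_1(\Theta_0)=(-\tan\Theta_0,1)$ since $\tan\Theta_0=p_{i_0}/(1-p_{i_0})$. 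After renormalising the pushing process to $\ell:=L(V^r)=(1-p_{i_0})L(|Y|)\!\circ\!\gamma$ one gets $dU^r_t=d\beta^1_t-\tan(\Theta_0)\,d\ell_t$ and $dV^r_t=d\beta^2_t+d\ell_t$ on $[0,S_1)$ with $\ell$ increasing only on $\{V^r=0\}$, i.e.\ $(U^r,V^r)_{t\le S_1}$ solves \eqref{ornmhp} with $\theta=\Theta_0$; weak uniqueness then gives the claim.

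The step I expect to be the main obstacle is exactly this last identification: turning ``delete the $E_{i_0}$-excursions of $Y$'' into the precise statement that the resulting diffusion is reflected on $\partial_1\cQ$ in the direction $v_1(\Theta_0)$. The safest rigorous routes are either (a) Itô excursion theory at the vertex $0$ of $Y$ --- describing the Poisson point process of excursions, checking that the kept excursions, read through $f^{i_0}$, are exactly those of a reflecting Brownian motion (so $L(V^r)=(1-p_{i_0})\,L(|Y|)\!\circ\!\gamma$) and that the deleted ones contribute precisely the horizontal term $-\tan(\Theta_0)\,d\ell$ --- or (b) a martingale problem argument: showing that for every $g$ harmonic in $\cQ$ with $v_1(\Theta_0)\cdot\nabla g=0$ on $\partial_1\cQ$, the process $g(U^r_t,V^r_t)$ is a local martingale up to $S_1$, by extending $g$ to $\{u+v>0\}$ affinely along the direction $(1,-1)$ and using the decomposition of $(U,V)$ above. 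In either case the ``planar Brownian motion in the interior'' structure from the first step is what pins down the law, and the scaling property (Proposition~\ref{scalprop}) lets one normalise to $|x|=1$ throughout.
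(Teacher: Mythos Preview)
Your plan matches the paper's: show that $(U^r,V^r)_{t\le S_1}$ solves \eqref{ornmhp} with angle $\Theta_0$, started at $(|x|,0)$ and stopped when the first coordinate hits $0$, then invoke uniqueness. The paper also extends $(U,V)$ past $\tau_1$ by setting $(\hat U_t,\hat V_t):=(|x|+W^{i_0}_t,f^{i_0}(Y_t))$ for all $t\ge0$, so that $\hat V$ is globally a skew Brownian motion of parameter $1-p_{i_0}$ and the time change becomes global; this is a minor but useful simplification.

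The difference lies in how the step you correctly flag as the obstacle is executed. Rather than your routes (a) or (b), the paper uses an elementary $\eps$-discretization (Lemma~\ref{tyjh}, following \cite{MR000}): introduce the successive crossings $\sigma_k^\eps=\inf\{t\ge\tau_k^\eps:\hat V_t=-\eps\}$, $\tau_{k+1}^\eps=\inf\{t\ge\sigma_k^\eps:\hat V_t=0\}$, and write $\hat U^r_t-|x|$ as a telescoping sum over these. On each interval $[\sigma_k^\eps,\tau_{k+1}^\eps]$ one has $Y\in E_{i_0}^*$, so by the slope-$(-1)$ constraint you already noted, $\hat U_{\tau_{k+1}^\eps}-\hat U_{\sigma_k^\eps}=-(\hat V_{\tau_{k+1}^\eps}-\hat V_{\sigma_k^\eps})$; the sum of these $\hat V$-increments converges to $p_{i_0}L_{\hat\gamma(t)}(\hat V)$ by a standard skew-BM local-time computation, while the complementary $\hat U$-increments converge to $B^1_t:=\int_0^{\hat\gamma(t)}1_{\{\hat V_s>0\}}dW^{i_0}_s$. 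This yields $\hat U^r_t=|x|+B^1_t-\frac{p_{i_0}}{1-p_{i_0}}L_t(\hat V^r)$ directly, which is exactly your heuristic bookkeeping made rigorous. This route is more elementary than excursion theory and more concrete than the martingale-problem approach; your route (b)---extending harmonic test functions affinely along $(1,-1)$ into $\{u+v>0\}$ and checking the martingale property for $(U,V)$ before time change---is a clean alternative that would also work.
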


The proof of this lemma is given at the end of this section. 
Define the sequence of processes $(Z^n)_{n\ge 1}$ such that for $n\ge 0$,
\begin{eqnarray*}
Z^{2n+1} &=& (U^r_{t+T_{2n}},V^r_{t+T_{2n}})_{t\le S_{2n+1}},\\
Z^{2n+2} &=& (V^r_{t+T_{2n+1}},U^r_{t+T_{2n+1}})_{t\le S_{2n+2}}.
\end{eqnarray*}
Set also for $n\ge 0$, $U_{2n}=U^r_{T_{2n}}$ and $U_{2n+1}=V^r_{T_{2n+1}}$.

Applying Lemma \ref{zzz} and using the strong Markov property at the stopping times $\tau_{n}$, 
with the fact that if $(X,Y)$ is distributed as $\PP^{(2),0}_{(x,y)}$, 
then $(Y,X)$ is distributed as $\PP^{(2),0}_{(y,x)}$, one has the following

\begin{lemma}\label{zzzn} 
For all $n\ge 0$, given $\mathcal{F}_{\tau_{n}}$, the law of $Z^{n+1}$  is $\PP^{\Theta_{n}}_{U_n}$.
\end{lemma}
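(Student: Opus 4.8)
The plan is to deduce the lemma, separately for $n$ even and $n$ odd, from Lemma~\ref{zzz} applied to the $2$-point motion restarted at the stopping time $\tau_n$. For $n=0$ there is nothing to prove, this being exactly Lemma~\ref{zzz}, so fix $n\ge 1$. The construction of $(X^{(2)}_t)_{t<\tau_\infty}$ in Section~\ref{qqq0} concatenates independent pieces; hence, by the strong Markov property, conditionally on $\mathcal{F}_{\tau_n}$ the shifted process $(X_{\tau_n+t},Y_{\tau_n+t})_{t\le\tau_{n+1}-\tau_n}$ is distributed as the $2$-point motion of $(\hbox{ISDE})$ started from $(X_{\tau_n},Y_{\tau_n})$ and run up to the first time the coordinate that is away from $0$ hits $0$. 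First I would record that $i_{n}$, hence $\Theta_n=\arctan(p_{i_{n}}/(1-p_{i_{n}}))$, and $U_n$ are $\mathcal{F}_{\tau_n}$-measurable, and that the block $Z^{n+1}=(U^r_{T_n+t},V^r_{T_n+t})_{t\le S_{n+1}}$ is built from this shifted piece alone, by the same recipe (the maps $f^i$, the removal of the time intervals during which both coordinates lie on a common ray, the time change $\gamma$) that on $[0,\tau_1]$ produced $(U^r,V^r)_{\cdot\le S_1}$.

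If $n=2m$, then $Y_{\tau_n}=0$ and $X_{\tau_n}=U_n\in E_{i_{n}}^{\ast}$, so the shifted process starts from a configuration of the form $(|x'|,0)$ with $x'$ on a fixed ray, which is exactly the starting configuration of Lemma~\ref{zzz}. Consequently $Z^{n+1}$ is the process $(U^r,V^r)_{\cdot\le S_1}$ attached to this shifted $2$-point motion, and Lemma~\ref{zzz}, applied conditionally on $\mathcal{F}_{\tau_n}$, yields that the conditional law of $Z^{n+1}$ is $\PP^{\Theta_n}_{U_n}$. If $n=2m+1$, then $X_{\tau_n}=0$ and $Y_{\tau_n}=U_n$; here I would first invoke the symmetry of $\PP^{(2),0}$, namely that if $(X,Y)$ has law $\PP^{(2),0}_{(a,b)}$ then $(Y,X)$ has law $\PP^{(2),0}_{(b,a)}$, to regard $(Y_{\tau_n+t},X_{\tau_n+t})$ as a $2$-point motion started from $(U_n,0)$. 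Building from this swapped process the objects $(\tilde U^r,\tilde V^r)$ of Section~\ref{orbm2pt}, one then checks that $(\tilde U^r_t,\tilde V^r_t)=(V^r_{T_n+t},U^r_{T_n+t})$ for all relevant $t$, because the maps $f^i$, the quadrant $\mathcal{Q}$ and the clock $A$ are invariant under interchanging the two coordinates and because the ray containing $Y_{\tau_n}$ is $i_{n}$. Hence $Z^{n+1}=(\tilde U^r,\tilde V^r)_{\cdot\le\tilde S_1}$, and Lemma~\ref{zzz} applied to the swapped shifted motion again gives that the conditional law of $Z^{n+1}$ given $\mathcal{F}_{\tau_n}$ is $\PP^{\Theta_n}_{U_n}$, the angle returned by Lemma~\ref{zzz} being $\arctan(p_{i_{n}}/(1-p_{i_{n}}))=\Theta_n$.

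The main obstacle will be the bookkeeping in the odd case: one has to follow the coordinate swap through the definition of $(U,V)$, through the half-plane $\{(u,v)\in\mathbb R^2:\;u+v>0\}$ on which $(U,V)$ takes its values, and through the time change $\gamma$, so as to be certain that $Z^{2m+2}$, which is defined with the two coordinates already interchanged (as $(V^r_{\cdot+T_{2m+1}},U^r_{\cdot+T_{2m+1}})$), really coincides with the ``$Z^1$-object'' of the swapped $2$-point motion. This identification is routine but slightly tedious; everything else is an immediate combination of Lemma~\ref{zzz} with the strong Markov property at $\tau_n$. I would also note in passing that $\Theta_n$ can take only finitely many values, all lying in a compact subinterval of $]0,\pi/2[$, so that together with this lemma the hypotheses of Section~\ref{secref} are met --- which is why the statement is placed here.
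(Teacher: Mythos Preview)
Your argument is correct and follows exactly the paper's approach: Lemma~\ref{zzz} combined with the strong Markov property at $\tau_n$, together with the symmetry that $(Y,X)$ has law $\PP^{(2),0}_{(y,x)}$ whenever $(X,Y)$ has law $\PP^{(2),0}_{(x,y)}$, the latter handling the odd case after the coordinate swap built into the definition of $Z^{2m+2}$. Your write-up is more detailed than the paper's one-line justification (in particular on the odd-case bookkeeping), but the method is the same.
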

This lemma shows that the sequences $(\Theta_n)_{n\ge 0}$ and $(Z^n)_{n\ge 1}$ satisfy (i) and (ii) in the beginning of
Section \ref{secref} since for all $n\ge 0$,
$$\mathcal{G}_n=\sigma\big((\Theta_k,Z^k);\; 1\le k\le n\big)\vee\sigma(\Theta_0)\subset \mathcal F_{\tau_{n}}.$$ 
Thus $(U^r_t,V^r_t)_{t< T_\infty}$ is a Brownian motion in $\mathcal{Q}^*$ started from
$(|x|,0)$, with time dependent angle of reflections at the boundaries
given by $(\Theta_n)_{n\ge 0}$
and stopped when  it hits $(0,0)$, as defined in Section 2.
In particular,  $(U^r,V^r)$ is a continuous process and 
$\lim_{t\uparrow T_\infty} (U^{r}_t,V^{r}_t)=(0,0).$ 

\begin{remark} Note that $(i_n)_{n\ge 0}$ is an homogeneous Markov chain started from $i_0=1$ with transition matrix
$(P_{i,j})$ given by : for $(i, j)\in[1,N]^2$, $P_{i,j}=\frac{p_j}{\sum_{k\ne i} p_k}$.
Remark also that given $\mathcal G_n$, $Z^{n+1}$ and $i_{n+1}$ are independent and a fortiori $Z^{n+1}$ and $\Theta_{n+1}$
are also independent.
\end{remark}

\noindent{\bf Proof of Lemma \ref{zzz}.}
Set for $t\ge 0$, $(\hat{U}_t,\hat{V}_t):=(|x|+W^{i_0}_t,f^{i_0}(Y_t))$.
Then, for $t\le \tau_1$, $(\hat{U}_t,\hat{V}_t)=(U_t,V_t)$. 
Since $Y$ is a WBM started at $0$,
it is well known that $\hat{V}$ is a skew Brownian motion with parameter $1-p_{i_0}$. 
This can be seen using Freidlin-Sheu formula, which shows that
\begin{equation}\label{BMR}
\hat{V}_t = \int_0^t \big( \one_{\{\hat{V}_s>0\}} - \one_{\{\hat{V}_s\le 0\}}\big) dB^Y_s + (1-2p_{i_0})L_t(\hat{V}).
\end{equation}
Define $\hat{A}(t) = \int_0^t \one_{\{\hat{V}_s \ge 0\}} ds = \int_0^t \one_{\{Y_s\not\in E_{i_0}\}} ds$ and $\hat{\gamma}(t)=\inf\{s\ge 0 :\;\hat{A}(s)>t\}$. 
Note that $\hat{A}(t)=A(t)$ for all $t\le \tau_1$ and thus $\hat{\gamma}(t)=\gamma(t)$ for all $t\le S_1=A(\tau_1)$.
It is also well known that $\hat{V}^r_t:=\hat{V}_{\hat{\gamma}(t)}$ is a reflecting Brownian motion on $\RR_+$.
Set $M_t = \int_0^t 1_{\{\hat{V}_s> 0\}} d \hat{V}_s=\int_0^t 1_{\{Y_s\not\in E_i\}} d B^Y_s$.
Then $B^2_t:=M_{\hat{\gamma}(t)}$ is a Brownian motion.
We also have $\hat{V}_t\vee 0 = M_t + (1-p_{i_0}) L_t(\hat{V})$, which implies that $\hat{V}^r_t=B^2_t + (1-p_{i_0}) L_{\hat{\gamma}(t)}(\hat{V})$ and therefore $L_t(\hat{V}^r)=(1-p_{i_0})L_{\gamma(t)}(\hat{V})$.
Note finally that $L(\hat{V})=L(|Y|)$.

Set for $t\ge 0$, $B^1_t=\int_{0}^{\hat\gamma(t)}1_{\{\hat{V}_s>0\}} dW^{i_0}_s$.
By L\'evy's theorem $B^1$ and $B^2$ are two independent Brownian motions.
Finally, set $\hat{U}_t^r=\hat{U}_{\hat{\gamma}(t)}$. 
Then, for all $t\le S_1$, $(\hat{U}_t^r,\hat{V}_t^r)=(U_t^r,V_t^r)$.

Lemma \ref{zzz} is a direct consequence of the following

\begin{lemma}\label{tyjh} For all $t\ge 0$, 
\begin{eqnarray*}
\hat{U}^r_t &=& |x| + B^1_t - \frac{p_i}{1-p_i}L_t(\hat{V}^r)\\
\hat{V}^r_t &=&  B^2_t + L_t(\hat{V}^r).
\end{eqnarray*}
\end{lemma}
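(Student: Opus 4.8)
The plan is to prove the two displayed identities separately. The identity for $\hat V^r$ is essentially a rereading of what has already been established above: since $\hat V^r_t=\hat V_{\hat\gamma(t)}$ is a reflecting Brownian motion on $\RR_+$ it is nonnegative, so $\hat V_{\hat\gamma(t)}=\hat V_{\hat\gamma(t)}\vee 0$; plugging $t\mapsto\hat\gamma(t)$ into the skew-Brownian decomposition $\hat V_s\vee 0=M_s+(1-p_{i_0})L_s(\hat V)$ and using $B^2_t=M_{\hat\gamma(t)}$ together with $L_t(\hat V^r)=(1-p_{i_0})L_{\hat\gamma(t)}(\hat V)$ gives $\hat V^r_t=B^2_t+L_t(\hat V^r)$.

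For the identity for $\hat U^r$, I would start from $\hat U^r_t=\hat U_{\hat\gamma(t)}=|x|+W^{i_0}_{\hat\gamma(t)}$ and split $W^{i_0}_{\hat\gamma(t)}=\int_0^{\hat\gamma(t)}dW^{i_0}_s$ over $\{\hat V_s>0\}$, $\{\hat V_s<0\}$ and $\{\hat V_s=0\}$. The last set equals $\{Y_s=0\}$, has zero Lebesgue measure, and therefore contributes nothing to the stochastic integral, while the first piece is exactly $B^1_t$ by definition. So everything reduces to showing that $\int_0^{\hat\gamma(t)}\one_{\{\hat V_s<0\}}dW^{i_0}_s=-\frac{p_{i_0}}{1-p_{i_0}}L_t(\hat V^r)$.

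I would obtain this in two steps. First, on $\{\hat V_s<0\}=\{Y_s\in E_{i_0}^{\ast}\}$ one has $\one_{\{\hat V_s<0\}}\,dW^{i_0}_s=\one_{\{\hat V_s<0\}}\,dB^Y_s$: this is the identity $B^Y_t=\sum_i\int_0^t\one_{\{Y_s\in E_i\}}dW^i_s$, valid for the solution $(Y,W)$ of $(\hbox{ISDE})$ constructed in Section \ref{descr} and hence, by the uniqueness in law of Theorem \ref{propunic}, for any solution; alternatively it follows by comparing the Freidlin--Sheu formula (Theorem \ref{nh}) with $(\hbox{ISDE})$ applied to a test function $f_{i_0}\in\mathcal D$ equal to $|\cdot|$ on $E_{i_0}\cap\{|\cdot|\le R\}$ and letting $R\to\infty$. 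Second, I would compute $\int_0^u\one_{\{\hat V_s<0\}}dB^Y_s$ directly: writing $\hat V_u^-=\hat V_u^+-\hat V_u$, substituting $\hat V_u^+=M_u+(1-p_{i_0})L_u(\hat V)$ with $M_u=\int_0^u\one_{\{\hat V_s>0\}}dB^Y_s$ and equation \eqref{BMR} for $\hat V_u$, and simplifying via $\one_{\{v>0\}}-(\one_{\{v>0\}}-\one_{\{v\le 0\}})=\one_{\{v\le 0\}}$, one gets $\int_0^u\one_{\{\hat V_s\le 0\}}dB^Y_s=\hat V_u^--p_{i_0}L_u(\hat V)$. Evaluating at $u=\hat\gamma(t)$, the term $(\hat V_{\hat\gamma(t)})^-=(\hat V^r_t)^-$ vanishes by nonnegativity of $\hat V^r$ and $L_{\hat\gamma(t)}(\hat V)=\frac1{1-p_{i_0}}L_t(\hat V^r)$, which produces the desired value and hence $\hat U^r_t=|x|+B^1_t-\frac{p_{i_0}}{1-p_{i_0}}L_t(\hat V^r)$.

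The main obstacle is the bookkeeping in this second half: one must carefully justify replacing $dW^{i_0}_s$ by $dB^Y_s$ on $\{Y_s\in E_{i_0}^{\ast}\}$ (this is where the precise form of $(\hbox{ISDE})$ and a localization in the radial coordinate enter), and then keep track of the symmetric local time and of the skew parameter $1-p_{i_0}$ so that the reflection coefficient comes out exactly as $p_{i_0}/(1-p_{i_0})$; every remaining step is a one-line computation. Throughout, $i$ in the statement denotes the index $i_0$ of the ray containing $x$, and since $p_{i_0}/(1-p_{i_0})=\tan(\Theta_0)$ these identities exhibit $(\hat U^r,\hat V^r)$ as an obliquely reflected Brownian motion on $\RR\times\RR_+$ with angle $\Theta_0$, which is what is needed for Lemma \ref{zzz}.
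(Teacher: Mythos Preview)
Your argument is correct and takes a genuinely different route from the paper's. The paper follows Le Jan--Raimond's excursion scheme: it introduces level-$\eps$ stopping times $(\sigma_k^\eps,\tau_k^\eps)$ for $\hat V$, telescopes $\hat U^r_t$ over the corresponding intervals, uses that $\hat U+\hat V$ is constant while $\hat V<0$ (because on $\{Y_s\in E_{i_0}^*\}$ one has $d\hat U_s=dW^{i_0}_s$ and $d\hat V_s=-dB^Y_s=-dW^{i_0}_s$), and passes to the limit $\eps\to 0$ to extract the local-time term $p_{i_0}L_{\hat\gamma(t)}(\hat V)$. You instead split $W^{i_0}_{\hat\gamma(t)}$ directly according to the sign of $\hat V$, identify the positive piece as $B^1_t$ by definition, and handle the negative piece by first replacing $dW^{i_0}_s$ with $dB^Y_s$ on $\{Y_s\in E_{i_0}^*\}$ and then reading off $\int_0^u\one_{\{\hat V_s\le 0\}}dB^Y_s=\hat V_u^--p_{i_0}L_u(\hat V)$ from the Tanaka identity $\hat V^-=\hat V^+-\hat V$ combined with \eqref{BMR} and the formula $\hat V^+_u=M_u+(1-p_{i_0})L_u(\hat V)$ already established before the lemma.

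Both proofs rest on the same underlying fact, namely $\one_{\{Y_s\in E_{i_0}^*\}}dW^{i_0}_s=\one_{\{Y_s\in E_{i_0}^*\}}dB^Y_s$; the paper encodes it as ``$\hat U+\hat V$ constant on negative excursions'' and uses an approximation to turn this into the local-time term, whereas you use it pointwise inside an It\^o integral and let Tanaka's formula do the work. Your route is shorter and avoids the limiting step; the paper's route is closer to the original argument in \cite{MR000} and makes the excursion picture explicit. Your two justifications of the replacement $dW^{i_0}\leftrightarrow dB^Y$ are both valid: comparing Freidlin--Sheu with $(\hbox{ISDE})$ on suitable test functions is the more self-contained one, while the appeal to uniqueness in law is fine once you note that $B^Y$ is a functional of $Y$ so that the identity is a statement about the joint law of $(Y,W)$.
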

\begin{proof}
We closely follow the proof of Lemma 4.3  \cite{MR000}. Let $\eps>0$ and define the sequences of stopping times
$\sigma_k^\eps$ and $\tau_k^\eps$ such that $\tau_0^\eps=0$ and
for $k\ge 0$,
\begin{eqnarray}
\sigma_{k}^\eps &=& \inf\{t\ge \tau_{k}^\eps;\;\hat{V}_t=-\eps\},\nonumber\\
\tau_{k+1}^\eps &=& \inf\{t\ge \sigma_{k}^\eps;\; \hat{V}_t=0\}.\nonumber\
\end{eqnarray}
Note first that \eqref{BMR} implies that
$$\sum_{k\ge 0} \big(\hat{V}_{\sigma^\eps_{k}\wedge
\hat\gamma(t)}-\hat{V}_{\tau^\eps_{k}\wedge \hat\gamma(t)}\big)$$
converges in probability as $\eps\to 0$ towards $B^2_t + (1-2p_i) L_{\hat\gamma(t)}(\hat{V})$.
Since $\hat{V}_t^r=B^2_t + (1-p_i)L_{\hat\gamma(t)}(\hat{V})$, setting
$$L^{\eps,r}_t = \sum_{k\ge 0} \big(\hat{V}_{\tau^\eps_{k+1}\wedge
\hat\gamma(t)}-\hat{V}_{\sigma^\eps_{k}\wedge \hat\gamma(t)}\big),$$
$L^{\eps,r}_t$ converges towards $p_i L_{\hat\gamma(t)}(\hat{V})$ in probability as $\eps\to 0$. Now for
$t>0$,
$$\hat{U}^{r}_t = |x| + \sum_{k\ge 0} \big(\hat{U}_{\tau^\eps_{k+1}\wedge
\hat\gamma(t)}-\hat{U}_{\tau^\eps_{k}\wedge \hat\gamma(t)}\big).$$
Set for $t\ge 0$,
$$B^{\eps,1}_t=\sum_{k\ge 0} \big(W^i_{\sigma^\eps_{k}\wedge
\hat\gamma(t)}-W^i_{\tau^\eps_{k}\wedge \hat\gamma(t)}\big).$$
Note that $d(\hat{U}_s+\hat{V}_s)=\sum_{j\neq i} \one_{\{Y_s\in E_j\}} dW^j_s$
and thus when $Y_s\in E_i^*$ (i.e. $\hat{V}_s$ is negative), $\hat{U}_s+\hat{V}_s$ remains constant, and we have 
\begin{eqnarray*}
\hat{U}_t^r
&=& |x| + \sum_{k\ge 0} \big(\hat{U}_{\tau^\eps_{k+1}\wedge
\hat\gamma(t)}-\hat{U}_{\sigma^\eps_{k}\wedge \hat\gamma(t)}\big) + \sum_{k\ge 0} \big(\hat{U}_{\sigma^\eps_k\wedge
\hat\gamma(t)}-\hat{U}_{\tau^\eps_{k}\wedge \hat\gamma(t)}\big) \\
&=& |x| - L^{\eps,r}_t + B^{\eps,1}_t.
\end{eqnarray*}
Since $B^{\eps,1}_t$ converges in probability towards $B^1_t$, we get
$$\hat{U}_t^r = |x| + B^1_t - p_{i_0} L_{\hat\gamma(t)}(\hat{V}).$$
And we conclude using that $L_t(\hat{V}^r)=(1-p_{i_0}) L_{\hat\gamma(t)}(\hat{V})$.
\end{proof}

\subsection{Coalescing property}\label{label}
Our purpose in this section is to prove that the consistent family defined in Section \ref{qqq0} has the coalescing property, that is

\begin{proposition}\label{coal}
With probability $1$, $\tau_{\infty}<\infty$.
\end{proposition}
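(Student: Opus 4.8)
The plan is to reduce the statement $\tau_\infty<\infty$ a.s.\ to the fact, established in Section \ref{deded} (Theorem \ref{ww}), that the obliquely reflected Brownian motion in the quadrant with time dependent reflections hits the corner $(0,0)$ in finite time almost surely. The bridge between the two-point motion of Section \ref{qqq0} and the quadrant process is exactly the construction carried out in Section \ref{orbm2pt}: starting from a pair $(X,Y)$ with law $\PP^{(2),0}_{(x,0)}$, one builds the time-changed process $(U^r_t,V^r_t)_{t<T_\infty}$, and Lemmas \ref{zzz}, \ref{zzzn} show it is precisely a Brownian motion in $\cQ^*$ with time dependent angles $(\Theta_n)_{n\ge0}$ lying in a compact subinterval $[\theta_-,\theta_+]\subset\,]0,\pi/2[$ (since the $p_i$ live in $]0,1[$ and $I$ is finite, the angles $\Theta_n=\arctan(p_{i_n}/(1-p_{i_n}))$ take finitely many values, so such $[\theta_-,\theta_+]$ exists). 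Hence Theorem \ref{ww} applies and gives $T_\infty=\sigma_0<\infty$ a.s.\ for the two-point motion started from any $(x,0)$ with $x>0$.

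First I would treat the case $n=2$. The stopping times $\tau_n$ of the two-point motion and the stopping times $T_n=A(\tau_n)$ of the time-changed process are related by the time change $A(t)=\int_0^t 1_{\{(U_s,V_s)\in\cQ\}}ds$; since outside $\cQ$ the two coordinates $X,Y$ evolve along the same ray and $\hat U_s+\hat V_s$ stays constant there, the complement of $\{(U_s,V_s)\in\cQ\}$ corresponds to genuine excursions of the skew Brownian motion $\hat V$ below $0$. The key point is that $\tau_\infty=\lim_n\tau_n$ is finite iff $T_\infty=\lim_n T_n$ is finite together with the accumulated ``off-quadrant'' time $\tau_\infty-T_\infty=\int_0^{\tau_\infty}1_{\{(U_s,V_s)\notin\cQ\}}ds$ being finite. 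The first is Theorem \ref{ww}; the second follows because this off-quadrant time is controlled by the total time spent by a skew Brownian motion (or reflecting Brownian motion run for a finite clock) below zero up to a finite time, which is finite a.s. Concretely, on $[\tau_{2n},\tau_{2n+1}]$ the relevant coordinate behaves like $\hat V$ and the excursion below $0$ has expected length comparable to $\EE[L_{\sigma_0}]$-type quantities already estimated; more simply, one observes that the original process $(X,Y)$ is continuous up to $\tau_\infty$ (each coordinate being a WBM stopped at $\tau_\infty$, hence a.s.\ convergent as in \cite{MRCVCVCV}), and once $X$ and $Y$ are both within $\epsilon$ of $0$ they coalesce in finite time with positive probability by a direct WBM argument, so a Borel--Cantelli / conditional Borel--Cantelli argument (as in Corollary \ref{contin}) upgrades $\lim_{t\uparrow\tau_\infty}X^{(2)}_t\in\Delta_2$ to $\tau_\infty<\infty$.

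For general $n$, I would argue by induction on $n$, or equivalently by a union bound. By construction of the $n$-point motion in Section \ref{qqq0}, between consecutive times $\tau_k$ exactly one coordinate sits at $0$ and the construction restarts; at each such epoch pick a pair of indices whose coordinates are ``closest''. It suffices to show that some pair coalesces in finite time, i.e.\ that $T_{\Delta_n}<\infty$ a.s. But the pairwise two-point motion of any two of the $n$ coordinates is, by consistency of the family, distributed as $\PP^{(2),0}$ up to its own first meeting time, and by the $n=2$ case that meeting time is a.s.\ finite. Since finitely many pairs are involved, $T_{\Delta_n}=\min_{i\ne j}T_{\{X^i=X^j\}}<\infty$ a.s., and by \eqref{asa} this equals $\tau_\infty$.

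\textbf{The main obstacle} I expect is the bookkeeping of the time change: carefully showing that $\tau_\infty<\infty$ is equivalent to $T_\infty<\infty$, i.e.\ that the total amount of time the two coordinates spend on a common ray (the excursions of $\hat V$ below $0$, summed over all the restart epochs) is almost surely finite. This is where one must quantify, as in the proof of Lemma \ref{ward} and Corollary \ref{contin}, that these excursion lengths are summable — using that $\sum_n U_n<\infty$ a.s.\ (Lemma \ref{HRHR}) and that, conditionally on $\cG_n$, the off-quadrant time on $[\tau_n,\tau_{n+1}]$ is dominated by a quantity of order $U_n$ in expectation (the same computation that yields $\EE[L^{n+1}_{S_{n+1}}\mid\cG_n]=U_n\,\mathrm{cotan}(\Theta_n)$ in Section \ref{intloctip}). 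Once this summability is in hand, continuity of $(X,Y)$ at $\tau_\infty$ and the identification $X^{(2)}_{\tau_\infty}\in\Delta_2$ close the argument.
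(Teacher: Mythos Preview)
Your reduction to $n=2$ and the identification of $(U^r,V^r)$ with the quadrant process of Section~\ref{secref} is correct, and Theorem~\ref{ww} indeed gives $T_\infty<\infty$ a.s. The gap lies in passing from $T_\infty<\infty$ to $\tau_\infty<\infty$, i.e.\ in controlling the ``off-quadrant'' time $\tau_\infty-T_\infty=\sum_n(\tau_{n+1}-\tau_n-S_{n+1})$. Your two suggestions for this step both fail as stated. First, the claim that the off-quadrant time on $[\tau_n,\tau_{n+1}]$ is $O(U_n)$ in expectation is false: on $[\tau_0,\tau_1]$ the coordinate $\hat U_t=|x|+W^{i_0}_t$ is a Brownian motion and $\tau_1$ is its first hitting time of $0$, so $\EE[\tau_1]=\infty$; the off-quadrant time (the time $\hat V$ spends below $0$ up to $\tau_1$) is a positive fraction of $\tau_1$ and likewise has infinite expectation. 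The analogy with the local-time identity $\EE[L^{n+1}_{S_{n+1}}\mid\cG_n]=U_n\,\mathrm{cotan}(\Theta_n)$ does not carry over to occupation times. Second, the ``more simply'' argument is circular: the convergence $\lim_{t\uparrow\tau_\infty}X^{(2)}_t$ that you invoke (via \cite{MRCVCVCV}) is only asserted on the event $\{\tau_\infty<\infty\}$; on $\{\tau_\infty=\infty\}$ each coordinate is a Walsh Brownian motion run for infinite time and does not converge.

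The paper bypasses the time-control problem entirely by working with local times instead. The observation is that $(|X_t|)_{t\le\tau_\infty}$ is a reflected Brownian motion stopped at $\tau_\infty$, so $\tau_\infty<\infty$ is equivalent to $L_{\tau_\infty}(|X|)<\infty$. Local time, unlike occupation time, does transfer cleanly through the time change $A$: one has $L_t(V^r)=(1-p_{i_n})L_{\gamma(t)}(|Y|)$ on each interval, which yields
\[
L_{\tau_\infty}(|X|)+L_{\tau_\infty}(|Y|)=\sum_{n\ge 0}\frac{L_{T_{n+1}}-L_{T_n}}{1-p_{i_n}}\le C\,L_{T_\infty},
\]
with $C=\max_i(1-p_i)^{-1}$. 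Then Theorem~\ref{hjh} (not Theorem~\ref{ww}) gives $L_{T_\infty}<\infty$ a.s., and the proof is complete. Your approach could in principle be repaired by exploiting the exponential decay of $U_n$ implicit in Lemma~\ref{HRHR} together with tail bounds on the rescaled off-quadrant times and a Borel--Cantelli argument, but this is substantially more work than the local-time route.
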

\begin{proof}
By symmetry and the strong Markov property,
it suffices to prove that $\tau_\infty<\infty$ a.s. only for $n=2$ and $(X_0,Y_0)=(x,0)$ for some $x\in G^*$. 
We use the notations of Section \ref{orbm2pt}.

Since $(|X_t|, t\le \tau_{\infty})$ is a reflected Brownian motion stopped at time $\tau_{\infty}$, it suffices to prove that a.s. $L_{\tau_{\infty}}(|X|)<\infty$. Denote by $L^{1}_t$ and $L^2_t$ the local times accumulated by
$Z$ respectively on $\{u=0\}$ and $\{v=0\}$ up to $t$ and
$L_t=L^1_t+L^2_t$.
First, note that for $t\le S_1$, $L_t(V^r)=(1-p_i)L_{\gamma(t)}(V)=(1-p_i)L_{\gamma(t)}(|Y|)$.
Thus $L_{\tau_1}(|Y|)=\frac{L_{S_1}(V^r)}{1-p_i}$. Note also that $L_{\tau_1}(|X|)=0$.
Thus $$L_{\tau_1}(|X|)+L_{\tau_1}(|Y|)=\frac{L_{S_1}}{1-p_i}.$$
Set $C=\sup_{\{1\le i\le N\}} (1-p_i)^{-1}$.
By induction, we get that
\begin{eqnarray*}
L_{\tau_\infty}(|X|)+L_{\tau_\infty}(|Y|) &=& \sum_{n\ge 0} \frac{L_{T_{n+1}} - L_{T_n}}{1-p_{i_n}} \;\le\; C\, L_{T_\infty}
\end{eqnarray*}
By Theorem \ref{hjh} a.s. $L_{T_{\infty}}<\infty$, and so $L_{\tau_{\infty}}(|X|)+L_{\tau_{\infty}}(|Y|)<\infty$.
\end{proof}

\subsection{Construction of $\varphi$}\label{constphi}
Let $(P^{(n)}, n\ge 1)$ be the unique consistent family of
Markovian semigroups such that
\begin{itemize}
\item $P^{(1)}$ is the semigroup of the WBM on
$G$.
\item The $n$-point motion of $P^{(n)}$ started from $x\in
G^n$ up to its entrance time in $\Delta_n$ is distributed as
$\mathbb P^{(n),0}_x$.
\item The $n$-point motion $(X^1,\dots,X^n)$ of $P^{(n)}$ is
such that if $X^i_s=X^j_s$ then $X^i_t=X^j_t$ for all $t\ge s$.
\end{itemize}
We will prove that $P^{(n)}$ is Feller for all $n$ and that (\ref{hila})
holds. By \cite[Lemma 1.11]{MR2060298}, this amounts to check the following condition.
\begin{lemma}
Let $(X,Y)$ be the two point motion associated to
$P^{(2)}$, then
for all positive $\eps$
$$\lim_{d(x,y)\to 0}\PP^{(2),0}_{(x,y)}[d(X_t,Y_t)>\eps] = 0.$$
\end{lemma}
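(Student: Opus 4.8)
The plan is to prove the (stronger) statement that $\PP^{(2),0}_{(x,y)}\big[\sup_{s\ge 0}d(X_s,Y_s)>\epsilon\big]\to 0$ as $d(x,y)\to 0$. Let $\sigma=\inf\{s\ge 0: X_s=0\text{ or }Y_s=0\}$ be the first time one of the two particles reaches the vertex, and bound $\sup_{s\ge 0}d(X_s,Y_s)$ by the maximum of $\sup_{s\le\sigma}d(X_s,Y_s)$ and $\sup_{s\ge\sigma}d(X_s,Y_s)$, treating the two parts separately and distinguishing whether $x$ and $y$ lie on a common ray.

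Before $\sigma$: if $x,y\in E_i^\ast$ lie on the same ray, then for $s\le\sigma$ both $X$ and $Y$ follow the same driving Brownian motion $W^i$ along $E_i$, so $|X_s|-|Y_s|$ is constant and $d(X_s,Y_s)\equiv d(x,y)$; moreover at time $\sigma$ one particle sits at the vertex and the other at distance exactly $d(x,y)$ from it. If instead $x\in E_i^\ast$, $y\in E_k^\ast$ with $i\ne k$ (or one of the points is already the vertex, in which case $\sigma=0$), then for $s<\sigma$ the moduli $|X_s|,|Y_s|$ are two independent Brownian motions started at $|x|,|y|\le d(x,y)$ and killed at $0$, and $d(X_s,Y_s)=|X_s|+|Y_s|$; the gambler's-ruin estimate $\PP(\sup_{s\le\sigma}|X_s|>a)\le |x|/a$ (and its analogue for $Y$) shows that $\PP(\sup_{s\le\sigma}d(X_s,Y_s)>\epsilon)\le 2d(x,y)/\epsilon\to 0$ and that $|X_\sigma|\vee|Y_\sigma|\le\sup_{s\le\sigma}|X_s|+\sup_{s\le\sigma}|Y_s|$ is small with high probability.

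After $\sigma$: by the strong Markov property, conditionally on $\mathcal F_\sigma$ the process $(X_{\sigma+s},Y_{\sigma+s})_{s\ge 0}$ has law $\PP^{(2),0}_{(X_\sigma,Y_\sigma)}$, and one of $X_\sigma,Y_\sigma$ is the vertex; write $x'$ for the other one (if $x'=0$ the particles have coalesced and $d\equiv 0$ afterwards). Applying the construction of Section \ref{orbm2pt} to $\PP^{(2),0}_{(x',0)}$ yields the obliquely reflected process $(U,V)$ with $|U_s|=|X_{\sigma+s}|$, $|V_s|=|Y_{\sigma+s}|$, whose time change $Z=(U^r,V^r)$ is the reflected Brownian motion in $\mathcal Q$ started from $(|x'|,0)$ and killed at $(0,0)$ of Section \ref{secref}. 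I will check that $d(X_{\sigma+s},Y_{\sigma+s})\le \sqrt2\,\sup_{t<T_\infty}|Z_t|$ for all $s<\tau_\infty$: when $(U_s,V_s)\in\mathcal Q$ this is immediate from $d=|U_s|+|V_s|\le\sqrt2\,|(U_s,V_s)|$ and the fact that such points are visited by $Z$, while on an excursion of $(U,V)$ outside $\mathcal Q$ (an excursion of the second particle into the ray carrying the first) the sum $U_s+V_s$ is constant and equal, up to sign, both to $d(X_{\sigma+s},Y_{\sigma+s})$ and to the value of $Z$ at the boundary endpoint of the excursion. Since $d\equiv 0$ on $[\tau_\infty,\infty)$, this gives $\sup_{s\ge\sigma}d(X_s,Y_s)\le\sqrt2\,\sup_{t<T_\infty}|Z_t|$. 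By the scaling property (Proposition \ref{scalprop}) the law of $\sup_{t<T_\infty}|Z_t|$ for the process started at $(a,0)$ is that of $a$ times the corresponding supremum started at $(1,0)$, so $g(a):=\PP\big(\sqrt2\sup_{t<T_\infty}|Z_t|>\epsilon\ \big|\ Z_0=(a,0)\big)$ is nondecreasing in $a$ with $g(0+)=0$ — the latter being exactly \eqref{gfgf} of Remark \ref{incl} (equivalently, $\sup_{t<T_\infty}|Z_t|<\infty$ a.s. by Corollary \ref{contin}). Hence $\PP(\sup_{s\ge\sigma}d>\epsilon)=\EE[g(|x'|)]$; in the same-ray case $|x'|=d(x,y)$ is deterministic, so this equals $g(d(x,y))\to 0$, and in the different-ray case $\EE[g(|x'|)]\le g(\delta)+\PP(|x'|>\delta)\le g(\delta)+2d(x,y)/\delta$ for every $\delta>0$, whence the claim after sending $d(x,y)\to 0$ and then $\delta\to 0$ and combining with the estimate before $\sigma$.

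The main difficulty is this last step: the process $(U,V)$ attached to a general, non-boundary starting pair is not literally the process of Section \ref{secref}, so one must first reduce to a start on $\partial_1\mathcal Q$ via the strong Markov property at $\sigma$, and then control $d(X,Y)$ during the excursions of $(U,V)$ outside $\mathcal Q$, where $|(U_s,V_s)|$ can be large while $d(X_{\sigma+s},Y_{\sigma+s})$ stays small. The fact that the restart modulus $|x'|$ is random is a secondary nuisance, handled by combining the scaling property with the crude bounds on $\sup|X|,\sup|Y|$ obtained before $\sigma$.
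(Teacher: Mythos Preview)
Your argument is correct and follows the same route as the paper: reduce to a start of the form $(x',0)$, pass to the obliquely reflected Brownian motion $Z$ in the quadrant, and invoke Remark~\ref{incl}. The paper's proof is much terser---it simply says ``take $y=0$'' and cites the inclusion $\{d(X_t,Y_t)>\epsilon\}\subset\{\sup_{t<\sigma_0}|Z_t|>\epsilon\}$---whereas you spell out the reduction via the strong Markov property at $\sigma$, the same-ray versus different-ray split, and the handling of the random restart modulus $|x'|$; your $\sqrt{2}$ constant in the bound $d\le\sqrt{2}\sup|Z|$ is in fact more careful than the paper's stated inclusion.
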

\begin{proof}
As in the proof of Proposition \ref{coal}, we take $y=0$. Then using
the same notations, for all positive $\epsilon$,
$\{d(X_t,Y_t)>\eps\}\subset\{\sup_{t<\sigma_0}|Z_t|>\epsilon\}$.
Now the lemma follows from Remark \ref{incl}.
\end{proof}
By Theorem \ref{610}, a SFM $\varphi$ can be associated to
$(P^{(n)})_n$.
\begin{proposition}
Let $\varphi$ be a SFM associated to $(P^{(n)})_n$. Then
there exists a family of independent white noises $\mathcal
W=(W^i, 1\le i\le N)$ such that
\begin{itemize}\item[(i)] $\mathcal F^{\mathcal W}_{s,t}\subset
\mathcal F^{\varphi}_{s,t}$ for all $s\le t$ and 
\item[(ii)]$(\varphi,\mathcal W)$ solves $(\hbox{ISDE})$.
\end{itemize}
\end{proposition}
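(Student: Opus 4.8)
The plan is to read off the driving noises $\mathcal W=(W^i)_{1\le i\le N}$ from $\varphi$ by following particles released far out on each ray, then to check that these form a family of independent white noises carried by the flow, and finally to recover $(\hbox{ISDE})$ by feeding the Freidlin--Sheu formula (Theorem \ref{nh}) into the one--point motion; this extends to $N$ rays the argument of Le Jan--Raimond \cite{MR000} for $N=2$. To build $\mathcal W$, fix $s\in\RR$ and, for $i\in[1,N]$ and $k\ge 1$, set $X^{i,k}_u=\varphi_{s,u}(e_i(k))$, which by construction of $(P^{(1)})$ is a WBM started at $e_i(k)$. A one--dimensional Brownian motion started at $k$ hits $0$ before a fixed time only with probability $O(e^{-ck^2})$, so by Borel--Cantelli, a.s., on each compact time interval and for all $k$ large enough the path $X^{i,k}$ stays in $E_i^{\ast}$; on that event Theorem \ref{nh} gives $|X^{i,k}_u|-k=B^{X^{i,k}}_u$, a Brownian motion. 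By the rigidity of the two--point motion inside one ray --- two particles both lying in $E_i^{\ast}$ have covariation rate $1$, hence a frozen difference, which is built into the construction of Section \ref{qqq0} --- the value $|X^{i,k}_u|-k$ does not depend on $k$ for $k$ large; I define $W^i_{s,u}$ to be this common value. The cocycle identity $\varphi_{s,u}=\varphi_{t,u}\circ\varphi_{s,t}$ together with the same rigidity yields the additivity $W^i_{s,u}=W^i_{s,t}+W^i_{t,u}$, and since $W^i_{u,v}=|\varphi_{u,v}(e_i(k))|-k$ for $k$ large, $W^i_{u,v}$ is $\mathcal F^{\varphi}_{u,v}$--measurable, which gives (i).

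Next I check that $\mathcal W$ is a family of $N$ independent real white noises. Work in the filtration $\mathcal G_u=\sigma(\varphi_{s,v}(y):v\le u,\ y\in G)$. On each compact interval $W^i_{s,\cdot}$ equals $B^{X^{i,k}}$ for $k$ large, and the latter is a $(\mathcal G_u)$--Brownian motion: its increments are, conditionally on the past, those of a WBM minus its boundary local time (hence centered), with quadratic variation $du$. For $i\ne j$ and $k$ large the pair $(X^{i,k},X^{j,k})$ never meets $0$ on the interval considered, so it runs exactly as the independent two--ray construction of Section \ref{qqq0}; therefore $\langle W^i_{s,\cdot},W^j_{s,\cdot}\rangle\equiv 0$, and the multidimensional L\'evy characterization theorem shows that $(W^1_{s,\cdot},\dots,W^N_{s,\cdot})$ is an $N$--dimensional Brownian motion. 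Stationarity of $\varphi$ (point 1 of Definition \ref{defsfm}) gives stationary increments and point 2 gives independence over disjoint time intervals, so $\mathcal W$ has the law of $N$ independent white noises.

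To prove (ii), fix $s\le t$, $f\in\mathcal D$ and $x\in G$, and set $X_u=\varphi_{s,u}(x)$. Since $f'(0)=0$, Theorem \ref{nh} gives $f(X_t)=f(x)+\int_s^t f'(X_u)\,dB^X_u+\tfrac{1}{2}\int_s^t f''(X_u)\,du$, so it remains to identify $\int_s^t f'(X_u)\,dB^X_u$ with $\sum_{i=1}^N\int_s^t (1_{E_i}f')(X_u)\,dW^i_u$, the integral against $W^i$ being with respect to the $(\mathcal G_u)$--Brownian motion $u\mapsto W^i_{s,u}$. As $X$ spends Lebesgue--null time at $0$, the left--hand side equals $\sum_i\int_s^t 1_{\{X_u\in E_i^{\ast}\}}f'(X_u)\,dB^X_u$, so it suffices to show, for each $i$, that $\int_s^t 1_{\{X_u\in E_i^{\ast}\}}\,d\langle B^X-W^i_{s,\cdot}\rangle_u=0$. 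Now $d\langle B^X\rangle_u=d\langle W^i_{s,\cdot}\rangle_u=du$, while on $\{X_u\in E_i^{\ast}\}$ one has $d\langle B^X,W^i_{s,\cdot}\rangle_u=d\langle|X|,W^i_{s,\cdot}\rangle_u=du$, because for $k$ large $\varphi_{s,\cdot}(e_i(k))$ stays in $E_i^{\ast}$ and the two--point motion of $\bigl(X,\varphi_{s,\cdot}(e_i(k))\bigr)$ has covariation rate $1$ exactly when $X\in E_i^{\ast}$. Hence the bracket vanishes, and summing over $i$ shows that $(\varphi,\mathcal W)$ satisfies the identity of Definition \ref{edsphi}.

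The main obstacle is precisely the construction of $\mathcal W$ together with this last computation: one must know that the displacement of \emph{any} particle during an excursion in $E_i^{\ast}$ coincides with the corresponding increment of $W^i$ --- so that the noise read off from faraway particles is exactly the one driving $(\hbox{ISDE})$ on $E_i$ --- and that these increments are independent across the different rays. Both reduce to the precise covariance structure of the two--point motion recorded in Section \ref{qqq0}, combined with the summable hitting--time estimates that force the limit over $k$ to be eventually constant; granting these, the rest is just Freidlin--Sheu and L\'evy's characterization theorem.
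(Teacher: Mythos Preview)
Your proof is correct and follows essentially the same route as the paper: define $W^i$ as the (eventually constant) displacement of particles started far out on $E_i$, use the two--point covariance structure from Section \ref{qqq0} to see that these are Brownian motions with vanishing cross--bracket for distinct rays, and then identify the martingale $B^X$ in Freidlin--Sheu with $\sum_i\int 1_{E_i}\,dW^i$ via the bracket $\langle B^X,W^i_{s,\cdot}\rangle_t=\int_s^t 1_{\{\varphi_{s,u}(x)\in E_i\}}\,du$. The paper's write--up is terser --- it packages the ``displacement'' directly as the associated Brownian motion $V_{s,\cdot}(x)$ and takes the limit along rationals, without spelling out the Borel--Cantelli step --- but the content is the same.
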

\begin{proof}
Let $V_{s,\cdot}(x)$ be the Brownian motion associated to
$\varphi_{s,\cdot}(x)$. For all $i\in[1,N]$ and $s\le t$, set
$$W^{i}_{s,t}=\lim_{|x|\to\infty, x\in E_i, |x|\in\QQ}
V_{s,t}(x).$$
For all $i\in[1,N]$ and $s\le t$, with probability $1$, this
limit exists.
Indeed if $x, y\in E_i$ are such that $|x|\le|y|$, then a.s.
$V_{s,t}(x)=V_{s,t}(y)$ for all $s\le t\le \tau^x_s=\inf\{u\ge
s;\; \p_{s,u}(x)=0\}$.
Moreover $W^i=(W^i_{s,t}, s<t)$ is a real white noise.
Indeed,  $W^i$ is centered and  Gaussian,
and by the flow property of $\varphi$ and using
$\varphi_{s,u}(x)=e_i(|x|+W^i_{s,u})$ if $s\le u\le \tau^x_s$ and
$x\in E_i$,
we have $W^i_{s,u}=W^i_{s,t}+W^i_{t,u}$. 
It is also clear that $W^i$ has independent increments with
respect to $(s,t)$. Thus, $W^i$ is a real white noise.
The fact that $\mathcal W=(W^i,1\le i\le N)$ is a family of
independent real white noises easily holds.

For $x\in G$ and $t\ge 0$, $$ \langle
W^i_{s,\cdot},V_{s,\cdot}(x)\rangle_t = \lim_{|y|\to\infty, y\in
E_i, |y|\in\QQ}\langle V_{s,\cdot}(y),V_{s,\cdot}(x)\rangle_t =
\int_{s}^{t}1_{\{\p_{s,u}(x)\in E_i\}}du.$$
This yields 
$$V_{s,t}(x)=\sum_{i=1}^N\int_{s}^{t}1_{\{\p_{s,u}(x)\in
E_i\}}dW^i_u.$$
By Theorem \ref{nh}, we deduce that $(\varphi,\mathcal W)$ solves
$(\hbox{ISDE})$.\end{proof}
Denote by $\PP_E$ the law of $(\varphi,\mathcal{W})$.

\subsection{Uniqueness in law of a SFM solution of
$(\hbox{ISDE})$}\label{eqq}
In this section, we show that the SFM $\varphi$ constructed in Section \ref{constphi} is the only SFM solution of $(\hbox{ISDE})$.
More precisely, we show
\begin{proposition}\label{uniquephi}
Let $(\varphi,\mathcal{W})$ be a solution of $(\hbox{ISDE})$, with
$\varphi$ a SFM. Then the law of $(\varphi,\mathcal{W})$ is
$\PP_E$.
\end{proposition}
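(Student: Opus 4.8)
The plan is to reduce the uniqueness in law of a SFM solution $(\varphi,\mathcal{W})$ of $(\hbox{ISDE})$ to the uniqueness in law of its $n$-point motions, and then to identify those $n$-point motions with the consistent family $(P^{(n)})_n$ constructed in Section \ref{constphi}. Recall that by Le Jan--Raimond \cite{MR2060298}, the law of a SFM is completely determined by the laws of its finite-dimensional motions $(\varphi_{0,t}(x_1),\dots,\varphi_{0,t}(x_n))$; moreover, since $(\varphi,\mathcal W)$ solves $(\hbox{ISDE})$, we have shown $\mathcal F^{\mathcal W}_{s,t}\subset\mathcal F^{\varphi}_{s,t}$, so the joint law of $(\varphi,\mathcal W)$ is in turn determined by the law of $\varphi$ together with the way $\mathcal W$ is recovered from it (via the Brownian motions associated to the one-point motions, exactly as in the construction in Section \ref{constphi}). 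Hence it suffices to prove that for every $n\ge 1$ and every $x\in G^n$, the $n$-point motion $(\varphi_{0,t}(x_1),\dots,\varphi_{0,t}(x_n))_{t\ge 0}$ has law $\mathbb P^{(n)}_x$, i.e. is the $n$-point motion of $(P^{(n)})_n$.

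First I would treat the coalescing structure. For a single starting point, $\varphi_{0,\cdot}(x)$ solves $(\hbox{ISDE})$ by Definition \ref{edsphi}, hence by Theorem \ref{propunic}(i) it is a WBM started at $x$; this gives $P^{(1)}$. For $n$ points, I would first argue that the SFM $\varphi$ is coalescing: since $\varphi_{0,\cdot}(x_i)$ and $\varphi_{0,\cdot}(x_j)$ are each solutions of $(\hbox{ISDE})$ driven by the \emph{same} white noises $\mathcal W$, away from $0$ they follow the same driving Brownian motion $W^k$ on a common ray, so by a one-dimensional comparison/pathwise argument (as in the $N=2$ analysis of Section \ref{doudou}, or directly from Proposition \ref{coal}) two coordinates that meet stay together, and in fact meet in finite time. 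This is the content of Theorem \ref{thmphi}(ii), which I would invoke here rather than reprove. The key remaining point is to identify the law of the $n$-point motion \emph{up to the coalescing time} $\tau_\infty=T_{\Delta_n}$, since after that time the dynamics is forced by coalescence and the lower-dimensional data.

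For the law up to $\tau_\infty$, I would show that the $n$-point motion of $\varphi$, stopped at $T_{\Delta_n}$, has law $\mathbb P^{(n),0}_x$. The strategy is to run the same excursion-decomposition used in Section \ref{qqq0}: between successive times $\tau_k$ at which one coordinate hits $0$, all coordinates evolve as measurable functions of the driving noises $\mathcal W$ — each $\varphi_{0,\cdot}(x_j)$ being on a fixed ray $E_{i}$ and equal to $e_i(|x_j|+W^i)$ there, while the coordinate sitting at $0$ is the WBM piece — and this is exactly the recursive construction defining $\mathbb P^{(n),0}_x$. The independence across the blocks $[\tau_\ell,\tau_{\ell+1}]$ that appears in that construction here comes from the i.d.i. (independence on disjoint time intervals) property of the flow together with the strong Markov property at the stopping times $\tau_\ell$; and $\tau_\infty<\infty$ a.s.\ has already been established in Proposition \ref{coal}. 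Putting these together, the $n$-point motion of $\varphi$ satisfies the three defining properties of $(P^{(n)})_n$ listed in Section \ref{constphi}, hence has the same law, and therefore $\varphi$ has the same law as the SFM constructed there, with $\mathcal W$ recovered identically; thus the law of $(\varphi,\mathcal W)$ is $\PP_E$.

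The main obstacle I expect is the rigorous identification of the $n$-point motion of $\varphi$ with $\mathbb P^{(n),0}_x$ \emph{jointly} with the driving white noises — one must check that on each excursion block the coordinates of $\varphi$ really are the prescribed measurable functionals of $\mathcal W$ (and of the independent WBM pieces at the vertex), and that the conditional independence built into the definition of $\mathbb P^{(n),0}_x$ is matched by the i.d.i.\ property of the flow. Once this block-by-block identification is in place, concatenating over $\ell$ and passing to the limit $\ell\to\infty$ (legitimate since $\tau_\infty=T_{\Delta_n}<\infty$ a.s.) is routine, and the coalescing continuation beyond $T_{\Delta_n}$ is forced.
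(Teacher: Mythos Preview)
Your approach is essentially the paper's: reduce to the $n$-point motions, identify their law up to entry in $\Delta_n$ via the block decomposition (the coordinates away from $0$ are explicit functions of $\mathcal W$ --- this is Lemma~\ref{wsw0} in the paper --- while the coordinate at $0$ together with $W$ has its joint law pinned down by Theorem~\ref{propunic}(i); then iterate by the strong Markov/flow property, which is your appeal to i.d.i.), and conclude using that $\Delta_n$ is absorbing plus consistency.

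One ordering point: you should not invoke Theorem~\ref{thmphi}(ii) or Proposition~\ref{coal} to get coalescence of the \emph{arbitrary} SFM solution as input --- that statement is about the specific $\varphi$ constructed in Section~\ref{constphi}. The paper only needs that $\Delta_n$ is absorbing for any SFM, which is immediate from the cocycle identity $\varphi_{s,u}=\varphi_{t,u}\circ\varphi_{s,t}$; finiteness of $T_{\Delta_n}$ for the arbitrary SFM then \emph{follows} a posteriori from the identification of the stopped law with $\PP^{(n),0}_x$ and Proposition~\ref{coal}, rather than being assumed.
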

\begin{proof}
We start by showing
\begin{lemma}\label{wsw0}
For all $x=e_i(r)\in G$, we have
$\varphi_{s,t}(x)=e_i(r+W^i_{s,t})$ for all $s\le t\le
\tau^x_s=\inf\{t\ge s : \varphi_{s,t}(x)=0\}$. In particular for
all $1\le i\le N$, $s\le t$, we have $\mathcal
F^{W^i}_{s,t}\subset \mathcal F^{\varphi}_{s,t}$.
\end{lemma}
\begin{proof}
Fix $i\in [1,N]$. Let $f\in \mathcal D$ be such that $f(x)=|x|$ for all $x\in E_i$. By
applying $f$ in $(\hbox{ISDE})$, we deduce the first claim. The second
claim is then an immediate consequence by taking a sequence
$(x_k)_k\subset E_i$ converging to $\infty$.
\end{proof}

With this lemma and Theorem \ref{propunic} we prove the
following
\begin{lemma}\label{lem:MPunicity} Let $x=(x_1,\cdots,x_n)\in
G^n$. Let $S=\inf\{t\ge 0 :
(\varphi_{0,t}(x_1),\cdots,\varphi_{0,t}(x_n))\in\Delta_n\}$.
Then $(\varphi_{0,t}(x_1),\cdots,\varphi_{0,t}(x_n))_{t\le S}$ is
distributed like $\PP^{(n),0}_x$. \end{lemma}
\begin{proof} 
Let $x\in G^n\setminus \Delta_n$ such that $x_{j_0}=0$ for some $j_0$.
For $j\in [1,n]$, set
$Y^j_t=\varphi_{0,t}(x_j)$ and $Y^{(n)}_t=(Y^1_t,\dots,Y^n_t)$.
Set for $i\in [1,N]$, $W^i_t=W^i_{0,t}$ and
$W_t=(W^1_t,\dots,W^n_t)$.
Note that for all $j\in [1,n]$, $(Y^j,W)$ is a solution of $(\hbox{ISDE})$. Set
$$\sigma_1=\inf\{t\ge 0 :\; \exists j\ne j_0 \hbox{ such that } Y^j_t=0\}$$
and for $\ell\ge 1$, 
$$\sigma_{ \ell+1}=\inf\{t\ge \sigma_\ell :\, \exists j
\hbox{ such that $Y^j_t=0$ and $Y^j_{\sigma_\ell}\neq 0$}\}.$$
Let $\sigma_\infty=\lim_{\ell\to\infty}\sigma_\ell$, then $\sigma_\infty=\inf\{t :
Y^{(n)}_t\in\Delta_n\}$.
By Theorem \ref{propunic}, the law of $(Y^{j_0},W)$ is uniquely
determined.
Now, for $j\ne j_0$, we have that for $t\le \sigma_1$, $Y^j_t=e_i(|x_j|+W^i_t)$ where $i$ is such that $x_j\in E_i^\ast$.
This shows that $(Y^{(n)}_t)_{t\le \sigma_1}$ is distributed as
$(X^{(n)}_t)_{t\le \tau_1}$, constructed in Subsection
\ref{qqq0}. Adapting the previous argument on the time interval
$[\sigma_\ell,\sigma_{\ell +1}]$, we show that for all $\ell\ge
1$, $(Y^{(n)}_t)_{t\le \sigma_\ell}$ is distributed as
$(X^{(n)}_t)_{t\le \tau_\ell}$. The Lemma easily follows.
\end{proof}

Lemma \ref{lem:MPunicity} permits to conclude the proof of
Proposition \ref{uniquephi}.
Indeed, the law of a SFM is uniquely determined by its family of
$n$-point motions $X^{(n)}$.
Using the fact that $\Delta_n$ is an absorbing set for $X^{(n)}$,
the strong Markov property at time $T^n=\inf\{t:\; X^{(n)}_t\in
\Delta_n\}$ and the consistency of the family of $n$-point
motions, we see that the law of a SFM is uniquely determined by
its family of $n$-point motions stopped at its first entrance
time in $\Delta_n$.
\end{proof}

\section{Extension to metric graphs}\label{last1}
Let $(I_k)_{1\le k\le K}$ be a family of finite sets such that $I_k\cap I_\ell \cap I_m=\emptyset$ for all $1\le k<\ell<m\le K$.
Let $(G_k)_{1\le k\le K}$ be a family of star graphs such that for each $k$, $G_k=\cup_{i\in I_k} E_k^i$, where $\{E_k^i,\;i\in I_k\}$ is the set of edges of $G_k$.
Set $I=\cup_{i=1}^K I_k$ and $\cI=\cup_{k=1}^K I_k\times \{k\}$.
For each $k$, let $p_k:=(p_k^i)_{i\in I_k}$ be a family of parameters such that $0<p_k^i<1$ and $\sum_{i\in I_k} p_k^i=1$. 

Let $\mathcal W=\{W^i_k,\; (i,k)\in \cI\}$ be a family of white noises such that 
\begin{itemize}
\item For all $k\ne \ell$ and $i\in I_k\cap I_\ell$, then $W^i_k+W^i_\ell=0$.
\item For all family $\{(i_1,k_1),\dots,(i_n,k_n)\}\subset \cI$ such that $i_1,\dots,i_n$ are $n$ distinct indices, $\{W^{i_1}_{k_1},\dots,  W^{i_n}_{k_n}\}$ is an independent family of white noises.
\end{itemize}
Notice that the law of $\mathcal{W}$ is uniquely described by these two properties and that $\mathcal{W}$ can be constructed out of a family of $|I|$ independent white noises.
Let us also remark that the second property implies that for each $k$, $\{W^i_k,\; i\in I_k\}$ is a family of independent white noises.

\bthm \label{idi-ind}
Let $(K_k)_{1\le k\le K}$ be a family of SFK's respectively defined on $G_k$. 
Assume that 
\begin{itemize}
\item For all $k$, $(K_k,W_k)$ solves $E(G_k,p_k)$, 
\item $(\cF_{s,t}:=\vee_k\cF^{K_k}_{s,t})_{s\le t}$ is i.d.i.
\end{itemize}
Then, the flows $(K_k)_{1\le k\le K}$ are independent given $\mathcal W$.
\ethm

The rest of this section will consist in proving Theorem \ref{idi-ind}.
For a SFK $K$ and a white noise $W$, $K(t)$ denotes $K_{0,t}$ and $W(t)$ denotes $W_{0,t}$. 

\subsection{Feller semigroups}\label{wdr}
Let $n:=(n_k)_{1\le k\le K}$ be a family of nonnegative integers and set $G^{(n)}:=\prod_k G_k^{n_k}$. For $t\ge 0$, $x:=(x_k)_{1\le k\le K}\in G^{(n)}$ and $w\in \RR^{|I|}$, set for $f\in C_0(G^{(n)})$ and $g\in C_0(\RR^{|I|})$,
$$\QQ^{(n)}_t (f\otimes g)(x,w)=\EE \big[\big(\otimes_k (K_k(t))^{\otimes n_k} \big) f (x) g(w+W(t))\big].$$
Note that the i.d.i property implies that $\QQ^{(n)}$ defines a Feller semigroup on $G^{(n)}\times \RR^{|I|}$. Denote by $\QQ^{(n)}_{(x,w)}$ the law of the diffusion started at $(x,w)$ associated to this semigroup.

Define also for all $k$, $\QQ^{(k,n_k)}_t$ the Feller semigroup on  $G_k^{n_k}\times \RR^{|I|}$ such that 
for $f_k\in C_0(G_k^{n_k})$ and $g\in C_0(\RR^{|I|})$,
$$\QQ^{(k,n_k)}_t (f_k\otimes g)(x_k,w)=\EE \big[\big(K_k(t))^{\otimes n_k} f_k\big) (x_k) g(w+W(t))\big].$$
Denote as above by $\QQ^{(k,n_k)}_{(x_k,w)}$ the law of the diffusion started at $(x_k,w)$ associated to this semigroup.

\medskip 
Let $(X,W)$ be a diffusion of law $\QQ^{(n)}_{(x,0)}$, then for all $(i,k)$, $(X^i_k,W)$ is a diffusion of law $\QQ^{(k,1)}_{(x_k^i,0)}$ and $(X^i_k,W_k)$ is a solution of $E(G_k,p_k)$  with $X^i_k(0)=x_k^i$.
This fact can easily be seen as a consequence of

\begin{lemma}
For all $k\in\{1,\dots,K\}$ and all $x\in G_k$, if $(X,W)$ is a diffusion of law $\QQ^{(k,1)}_{(x,0)}$, then $(X,W)$ is a solution of  $E(G_k,p_k)$.
\end{lemma}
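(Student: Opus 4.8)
The statement to prove is the last lemma: if $(X,W)$ is a diffusion of law $\QQ^{(k,1)}_{(x,0)}$ on $G_k\times\RR^{|I|}$, then $(X,W)$ solves $E(G_k,p_k)$. Since $E(G_k,p_k)$ is by definition a martingale-type identity coming from Definition~\ref{eds} (with $\{1,\dots,N\}$ replaced by $I_k$), the plan is to extract this identity directly from the Feller semigroup $\QQ^{(k,1)}$ by the usual characterization of a diffusion through its generator, and then to read off the required stochastic integral representation. First I would identify the generator of $\QQ^{(k,1)}$ on a suitable core. For $f\in\mathcal D$ (the domain of the generator of the WBM on $G_k$) and $g$ smooth on $\RR^{|I|}$, the definition $\QQ^{(k,1)}_t(f\otimes g)(x,w)=\EE[K_k(t)f(x)\,g(w+W(t))]$ together with the fact that $(K_k,W_k)$ solves $E(G_k,p_k)$ (so that $K_k(t)f(x)$ has the semimartingale decomposition given by \eqref{ker}) lets me compute $\frac{d}{dt}\QQ^{(k,1)}_t(f\otimes g)$ at $t=0$: the drift $\tfrac12 f''$ from the generator $A$ of the WBM, plus $\tfrac12\Delta_w g$ from the white noise $W$, plus a cross term $\sum_{i\in I_k}(1_{E_k^i}f')\,\partial_{w_i^k}g$ coming from $d\langle K_{0,\cdot}(1_{E_k^i}f')(x),W^i_k\rangle$. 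So the generator is $\mathcal A(f\otimes g)=\tfrac12 f''g+\tfrac12 f\Delta g+\sum_{i\in I_k}(1_{E_k^i}f')\,\partial_{w_i^k}g$, defined on the algebra generated by such tensor products.

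Next, from the martingale problem for $\QQ^{(k,1)}$ I would deduce that for the canonical process $(X_t,W_t)$,
$$ g(W_t)-g(W_0)-\tfrac12\int_0^t\Delta g(W_s)\,ds\quad\text{and}\quad (f\otimes g)(X_t,W_t)-(f\otimes g)(X_0,W_0)-\int_0^t\mathcal A(f\otimes g)(X_s,W_s)\,ds $$
are local martingales. Taking $f\equiv 1$ (or rather approximating by functions in $\mathcal D_0$) and running over all smooth $g$ gives that $W$ is a standard $|I|$-dimensional $(\mathcal F_t^{(X,W)})$-Brownian motion. Taking $g\equiv 1$ gives that $X$ satisfies the martingale problem of Lemma~\ref{martprop}, hence $X$ is an $(\mathcal F_t)$-WBM on $G_k$; by Theorem~\ref{nh} it has an associated Brownian motion $B^X$ with $f(X_t)=f(X_0)+\int_0^t f'(X_s)\,dB^X_s+\tfrac12\int_0^t f''(X_s)\,ds$ for $f\in\mathcal D$. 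Finally, the cross term in $\mathcal A$ identifies the bracket: expanding $(f\otimes g)(X_t,W_t)$ by Itô and comparing the finite-variation part with $\int_0^t\mathcal A(f\otimes g)\,ds$ forces $d\langle f(X),W^i_k\rangle_s=(1_{E_k^i}f')(X_s)\,ds$, i.e. $d\langle B^X,W^i_k\rangle_s=1_{E_k^i}(X_s)\,ds$. Since $\sum_{i\in I_k}1_{E_k^i}(X_s)=1$ a.e., this yields $B^X_t=\sum_{i\in I_k}\int_0^t 1_{E_k^i}(X_s)\,dW^i_{k,s}$, and substituting back into the Freidlin–Sheu formula gives exactly \eqref{kdd00} for $E(G_k,p_k)$. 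Hence $(X,W_k)$ solves $E(G_k,p_k)$.

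\textbf{Main obstacle.} The delicate point is the bracket computation $d\langle f(X),W^i_k\rangle_s=(1_{E_k^i}f')(X_s)\,ds$ and, relatedly, making sure the "diagonal" orthogonality relations among the coordinates of $W$ and the correct sign conventions $W_i^v:=\pm W_i$ are respected: one must check that the cross term produced by differentiating $\QQ^{(k,1)}_t$ at $t=0$ really is $\sum_{i\in I_k}(1_{E_k^i}f')\,\partial_{w_i^k}g$ and not something contaminated by other coordinates $w_j^\ell$ with $j\notin I_k$ (it is not, because $K_k$ depends only on $W_k$, so $\langle K_{0,\cdot}(\cdot)(x),W^j_\ell\rangle\equiv 0$ for $(j,\ell)\notin I_k\times\{k\}$, but this needs to be said). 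A secondary technical nuisance is that $f$ and $g$ in the definition of $\QQ^{(k,1)}$ range only over $C_0$-functions whereas the functions $f\in\mathcal D$ used to test $E(G_k,p_k)$ need only be in $C_b^2(G_k^\ast)$; this is handled by a standard localization/approximation argument, using $\mathcal D_0$ as a core and stopping times to reduce bounded-support functions to the general case, exactly as in the proof of Lemma~\ref{martprop}.
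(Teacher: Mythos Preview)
Your approach is essentially the paper's: identify the generator of $\QQ^{(k,1)}$ on tensor products $f\otimes g$, deduce from the associated martingale problem that $X$ is an $(\cF_t)$-WBM (Lemma~\ref{martprop}), then extract $\langle B^X,W^i_k\rangle_t=\int_0^t 1_{\{X_s\in E^i_k\}}\,ds$ from the cross term by comparing It\^o's formula for $f(X_t)g(W_t)$ with the martingale identity, and conclude via Freidlin--Sheu. One small correction to your parenthetical justification: ``$K_k$ depends only on $W_k$'' is false in general (for $|I_k|\ge 3$ the coalescing SFM is \emph{not} a Wiener solution, Theorem~\ref{thmphi}(iii)); the correct reason the cross term in the generator involves only $\partial_{w^i_k}g$ for $i\in I_k$ is that \eqref{ker} already exhibits the martingale part of $t\mapsto K_k(t)f(x)$ as a stochastic integral against $W_k$, and the remaining independent coordinates of $W$ have zero bracket with these integrals.
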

\begin{proof} In the following, set $G=G_k$, $p=p_k$, $\QQ_t=\QQ^{(k,1)}_t$ and $N=|I_k|$.
It is obvious that $W$ is an $N$-dimensional Brownian motion. It is also clear that $X$ is a WBM. Denote by $B^X$ the Brownian motion associated to $X$. Then by Freidlin-Sheu formula, $(X,W)$ solves $E(G,p)$ as soon as $B^X_t=\sum_i \int_0^t 1_{\{X_s\in E_i\}} dW^i_s$. It is enough to prove that $\langle B^X,W^i\rangle_t = \int_0^t 1_{\{X_s \in E_i\}} ds$ for all $i$.

Recall the definition of $\mathcal D$ from (\ref{domain}) and set  
$\mathcal D_1=\{f\in\mathcal D : \, f, f', f''\in C_0(G)\}.$
Denote by $A$ the generator of $\QQ_t$ and $\mathcal D(A)$ its domain, then $\mathcal D_1\otimes C^2_0(\RR^{N})\subset \mathcal D(A)$ and for all $f\in\mathcal D_1$ and $g\in C^2_0(\RR^{N})$,
$$A(f\otimes g)(x,w)=\frac{1}{2}f(x) \Delta g(w) +\frac{1}{2}f''(x) g(w) + \sum_{i=1}^{N} (f'1_{E_i})(x)\frac{\partial g}{\partial w^i}(w).$$
Thus for all $f\in\mathcal D_1$ and $g\in C^2_0(\RR^{N})$,
\begin{equation}\label{mart}
f(X_t) g(W_t)-\int_{0}^{t} A(f\otimes g)(X_s,W_s) ds\ \text{is a martingale}.
\end{equation}
Applying Freidlin-Sheu formula for $f(X_t)$, and then It\^o's formula for $f(X_t) g(W_t)$, we get that, using \eqref{mart}
$$\sum_{i=1}^N \int_0^t (f'1_{E_i})(X_s)\frac{\partial g}{\partial w^i}(W_s)ds
=\sum_{i=1}^N \int_0^t (f'1_{E_i})(X_s)\frac{\partial g}{\partial w^i}(W_s)d\langle B^X,W_i\rangle_s.$$
Since this holds for all $f\in\mathcal{D}_1$ and $g\in C_0^2(\RR^N)$, we get
$\langle B^X,W^i\rangle_t = \int_0^t 1_{\{X_s \in E_i\}} ds$ for all $i$.
\end{proof}

\subsection{A sufficient condition for conditional independence}

For $x=(x_k)_k\in G^{(n)}$ and $w\in \RR^{|I|}$, let $\PP^{(n)}_{(x,w)}$ be the law of $(X_1,\dots,X_K,W)$ such that 
$X_1,\dots, X_K$ are independent given $W$ and for all $k$, $(X_k,W)$ is distributed as $\QQ^{(k,n_k)}_{(x_k,w)}$.
Denote by $\EE^{(n)}_{(x,w)}$ the expectation with respect to $\PP^{(n)}_{(x,w)}$. 
Denote also by $\EE^{(k,n_k)}_{(x_k,w)}$ the expectation with respect to $\PP^{(k,n_k)}_{(x_k,w)}$.  
For $Z$, a $\sigma(W)$-measurable random variable, we simply denote $\EE^{(n)}_{(x,0)}[Z]$ and $\EE^{(k,n_k)}_{(x_k,w)}[Z]$  by $\EE[Z]$.

\bprop \label{cond-ind}
If for all $n:=(n_k)_k$ and all $x:=(x_k)_k\in G^{(n)}$, 
\begin{equation}\label{condd}
\QQ^{(n)}_{(x,0)}=\PP^{(n)}_{(x,0)}, 
\end{equation}
then the flows $(K_k)_k$ are independent given $W$.
\eprop
\begin{proof}
Fix $n=(n_k)_k$, $x=(x_k)_k$ and $f=\otimes_k f_k$, with $f_k\in C_0(G_k^{n_k})$, then
\begin{eqnarray*}
\EE \big[\prod_k (K_k(t))^{\otimes n_k} f_k(x_k)\big]
&=& \QQ^{(n)}_t  (f\otimes 1) (x,0) \\
&=& \EE^{(n)}_{(x,0)} \big[\prod_k f_k(X_k(t))\big]\\
&=& \EE \big[ \prod_k  \EE_{(x_k,0)}^{(k,n_k)}[f_k(X_k(t))|W] \big].
\end{eqnarray*}
Then the proposition follows from the fact that
\beq \EE_{(x_k,0)}^{(k,n_k)}[f_k(X_k(t))|W]  = \EE[(K_k(t))^{\otimes n_k}f_k(x_k)|W]. \label{knksachw}\eeq
Let us check \eqref{knksachw}. For this, take $g_0,\cdots,g_J$ in $C_0(\RR^{|I|})$ and fix $0=t_0<\cdots<t_J=t$. 
Then setting, for all $g\in C_0(\RR^{|I|})$, $h\in C_0(G_k^{n_k}\times \RR^{|I|})$ and all $t\ge 0$,
$\QQ^g_th(x,w)=g(w)\QQ^{(k,n_k)}_t h (x,w)$, one has (to lighten the notation below, $f_k$, $x_k$, $X_k$ and $(K_k(t))^{\otimes n_k}$ are denoted by $f$, $x$, $X$ and $K_t$)
\begin{eqnarray*}
\EE \big[K_t f(x) \prod_{0\le j\le J} g_j(W(t_j))\big]
&=& \QQ^{g_0}_{t_1}\cdots  \QQ^{g_{J-1}}_{t_J-t_{J-1}}(f\otimes g_J) (x,0)\\
&=& \EE^{(k,n_k)}_{(x,0)} \big[f_k(X(t)) \prod_{0\le j\le J} g_j(W(t_j))\big]\\
&=& \EE \big[ \EE_{(x,0)}^{(k,n_k)}[f(X(t))|W]  \prod_{0\le j\le J} g_j(W(t_j)) \big],
\end{eqnarray*}
which suffices to deduce \eqref{knksachw}.
\end{proof}

Note that, using the Feller property, \eqref{condd} is satisfied for all $n$ and all $x$ as soon as it is satisfied for all $n$ and all $x$ in a dense subset of $G^{(n)}$.

\subsection{Uniqueness up to the first meeting time at $0$} \label{umt0}
A point $x_k\in G_k^{n_k}$, will be denoted by $(x^i_k)_{1\le i\le n_k}$. Take $n=(n_k)_k$  as in Section \ref{wdr} and choose $x=(x_k)_k\in G^{(n)}$ such that for all $k\ne \ell$, $0\not\in \{x^i_k, 1\le i\le n_k\}\cap \{x^j_l, 1\le j\le n_l\}$. 
In the following, $(X,W)$ will be distributed as $\PP^{(n)}_{(x,0)}$ or as $\QQ^{(n)}_{(x,0)}$ with $X=(X_1,\cdots,X_K)$, and $(\cF_t)_{t\ge 0}$ denotes the filtration generated by $(X,W)$.
For $t\ge 0$, let $R_k(t):=\{X^i_k(t)\; | \; 1\le i\le n_k\}$. 
Define the sequence of stopping times $(\sigma_j)_{j\ge 0}$ such that $\sigma_0=0$ and for all $j\ge 0$,
\beq \sigma_{j+1}=\inf\{t\ge \sigma_j\; | \; \exists k,\; 0\in R_k(t) \hbox{ and } 0\not\in R_k(\sigma_j)\}. \eeq
Using the strong Markov property, it is easy to see that for all $j\ge 1$, there is only one $k$ such that $0\in R_k(\sigma_j)$
and that the sequence $(\sigma_j)_{j\ge 1}$ is strictly increasing. Denote by $\sigma_\infty=\lim_{j\to\infty}\sigma_j$.
\bprop \label{unicupcoal}
The law of $(X(t),W(t))_{t<\sigma_\infty}$ is the same under $\QQ^{(n)}_{(x,0)}$ and  under $\PP^{(n)}_{(x,0)}$.
\eprop
\begin{proof} Let $(X,W)$ be distributed as $\QQ^{(n)}_{(x,0)}$.
Without loss of generality, assume there exists
$\ell$ such that $0\in R_\ell(0)$ and $0\not\in \cup_{k\ne \ell} R_k(0)$. 
Then $\{(X_k(t), t\le \sigma_{1}),\, k\ne\ell\}$ is $\sigma(W)$-measurable and
therefore $\{(X_k(t), t\le \sigma_{1}),\, 1\le k \le K\}$ is a family of independent random variables given $W$.
So the conditional law of this family given $W$ is the same as the conditional law of  $\{(X_k(t), t\le \sigma_{1}),\, 1\le k \le K\}$ given $W$ whenever $(X,W)$ is distributed as $\PP^{(n)}_{(x,0)}$.
Denote this law by $\mu(x,W)$.
Using the strong Markov property at time $\sigma_n$, we get that given $\cF_{\sigma_n}$ and $W$, the law of $\{(X_k(t+\sigma_n), t\le \sigma_{n+1}-\sigma_n),\, 1\le k \le K\}$ is  $\mu(X(\sigma_n),W(\cdot+\sigma_n) - W(\sigma_n))$. Since this characterizes the law of $(X(t),W(t))_{t<\sigma_\infty}$, the proposition is proved.
\end{proof}

This proposition implies in particular that if $\PP^{(n)}_{(x,0)}(\sigma_\infty=\infty)=1$, 
then $\QQ^{(n)}_{(x,0)}=\PP^{(n)}_{(x,0)}$.

\subsection{The meeting time at $0$ is infinite} Our purpose here is to prove the following
\bprop \label{siginftyfini}
For all $n=(n_k)$ and  $x=(x_k)\in G^{(n)}$ such that $0\not\in \{x^i_k, 1\le i\le n_k\}\cap \{x^j_l, 1\le j\le n_l\}$, for all $k\ne \ell$, we have
\begin{equation}\label{eq:pn0} \PP^{(n)}_{(x,0)}(\sigma_\infty=\infty)=1.\end{equation}
\eprop
\begin{proof}
Assume $K=2$, $n_1=n_2=1$ and take $x=(x_1,0)$ with $x_1\ne 0$.
It is easy to see that if \eqref{eq:pn0} holds in this particular case, then it also holds in the general case.
We use in the following the notations of Subsection \ref{umt0}.
Note that for $k\in\{1,2\}$, $(X_k,W_k)$ is a solution of $E(G_k,p_k)$ and that
$$\sigma_\infty=\inf\{t\ge 0 : X_1(t)=X_2(t)=0\}.$$  
Set $I^c=I_1\cap I_2$.  For $k\in\{1,2\}$ and $i\in I_k$, set $\theta^i_k=\arctan\big(\frac{p^{i}_{k}}{1-p^{i}_{k}}\big)$ if $i\in I^c$ and $\theta^i_k=0$ otherwise.
Recall the definition of $(\sigma_n)_n$\,:  $\sigma_0=0$ and for all $\ell\ge 0$,
\begin{eqnarray*}
\sigma_{2\ell+1} &=& \inf\{t\ge \sigma_{2\ell} : X_1(t)=0\},\\
\sigma_{2\ell+2} &=& \inf\{t\ge \sigma_{2\ell+1} : X_2(t)=0\}.
\end{eqnarray*}
For all $\ell\ge 0$, set $U^0_{2\ell}=|X_1(\sigma_{2\ell})|$ and $U^0_{2\ell+1}=|X_2(\sigma_{2\ell+1})|$. Let $i_{2\ell}\in I_1$ and $i_{2\ell+1}\in I_2$ be such that $X_1(\sigma_{2\ell})\in E_1^{i_{2\ell}}$ and $X_2(\sigma_{2\ell+1})\in E_2^{i_{2\ell+1}}$. 
Set $\Theta^n=-\theta_2^{i_n}$ if $n$ is even and $\Theta^n=-\theta_1^{i_n}$ if $n$ is odd.

Say $x_1\sim x_2$ if there is $i\in I^c$ such that $x_1\in E_1^i$ and $x_2\in E_2^i$ and say $x_1\not\sim x_2$ otherwise.  Set 
$A_t=\int_0^t 1_{\{ X_1(s)\not\sim X_2(s)\}}ds$, 
$\gamma_t=\inf\{s\ge 0 : A_s>t\}$ and $\mathcal G_t={\mathcal F}_{\gamma_t}.$
Set $S_n=A_{\sigma_n}$ and define $\Theta_t=\Theta^{n}$ for $t\in [S_{n},S_{n+1}[$. Note that $S_\infty:=\lim_{n\to \infty} S_n=A_{\sigma_\infty}$.

Our purpose now is to define a c\`adl\`ag process $(U_t,V_t)_{t< S_\infty}$, 
such that for all $n\ge 0$, conditionally on $\cG_{S_{n}}$, $(U_t,V_t)_{t\in [S_{n},S_{n+1})}$ is a Brownian motion in the quadrant $\cQ$, obliquely reflected on $\partial_1\cQ$ with angle of reflection $\Theta^n$, such that 
\begin{itemize}
\item $(U_{S_{n}},V_{S_{n}})=(U^0_{n},0)$;
\item  $U_t>0$ for all $t\in [S_{n},S_{n+1})$
\end{itemize}
and such that (using the notation $H_{t-}=\lim_{s\uparrow t} H_s$ for $H$ a c\`adl\`ag process)
\begin{itemize}
\item when $n$ is odd, $V_{S_{n+1}-}=0$ if $X_1(\sigma_{n+1})\in E^{i_{n}}_1$ and $U_{S_{n+1}-}=0$ if not,
\item when $n$ is even, $V_{S_{n+1}-}=0$ if $X_2(\sigma_{n+1})\in E^{i_{n}}_2$ and $U_{S_{n+1}-}=0$ if not.
\end{itemize}

The construction being exactly the same for all $n\ge 0$, we just do it for $n=0$. 
Note that $X_{1}(t)\in E_1^{i_0}$ and $|X_1(t)|=|x_1|+W_1^{{i_0}}(t)$ for $t\le \sigma_1$. We then have two cases\,:\\
{\bf First case\,:} $i_0\in  I_1\setminus I^c$. For all $t<\sigma_1$, we have $X_2(t)\not\sim X_1(t)$ and $A(t)=t$. We also have $S_1=\sigma_1$.
Set for $t < S_1$, $U_t=|X_1(t)|$ and $V_t=|X_2(t)|$. 
Then $(U_t,V_t)_{t<S_1}$ is a normally reflected Brownian motion in the quadrant $\mathcal{Q}$ started at $(|x_1|,0)$ and killed when hitting $\{x=0\}$.\\
{\bf Second case\,:} $i_0\in I^c$. For $t\le \sigma_1$, set $X_t=|X_1(t)|$ and
$$Y_t=|X_2(t)| \big(1_{\{X_2(t)\not\sim X_1(t)\}} - 1_{\{X_2(t)\sim X_1(t)\}}\big).$$
Note that for $t\leq \sigma_1$, $A_t=\int_0^t 1_{\{Y_s>0\}}ds$ and $1_{\{Y_t<0\}}d(X_t+Y_t)=0$. 
The process $(X_t,Y_t)_{t\le \sigma_1}$ behaves as a two dimensional Brownian motion in $\mathcal Q$ and evolves on straight lines parallel to $\{y=x\}$ outside $\mathcal Q$  .
Finally set $\big(U_t,V_t\big)=\big(X_{\gamma_t},Y_{\gamma_t}\big)$, for $t< S_1$.
Following the proof of Lemma \ref{tyjh}, we check that $(U_t,V_t)_{t<S_1}$ is an obliquely reflected Brownian motion, with angle of reflection $\Theta^0$.

Define the sequence $(T_j)_{j\ge 0}$ by $T_0=0$ and for all $j\ge 0$,
$$T_{j+1}=\inf\{S_n : \; S_n>T_j\ \text{and}\ U_{S_n-}=0\}.$$
Set $T_\infty=\lim_{j\rightarrow\infty} T_j=S_\infty=A_{\sigma_\infty}$. Then $(U_t,V_t)_{t<S_{\infty}}$ is continuous except at the times $T_j$, $j\ge 1$.
Moreover, we have the following
\begin{lemma}
The process $(U_t,V_t)$ is $(\mathcal G_t)_t$-adapted.
There is a two-dimensional $(\mathcal G_t)_t$-Brownian motion $(B^1,B^2)$, such that for all $j\ge 0$ and all $t\in [T_{j},T_{j+1}[$, 
\begin{eqnarray*}
U_t &=& U_{T_{j}} + \int_{T_{j}}^t \big(dB^1_s-\tan\big(\Theta_s\big) dL_s(V)\big),\\
V_t &=&  \int_{T_{j}}^t \big(dB^2_s+dL_s(V)\big).
\end{eqnarray*}
Moreover $U_{T_{j+1}}=V_{T_{j+1}-}$,  $U_{T_{j+1}-}=0$ and $U_t>0$ for all $t<S_\infty$.
\end{lemma}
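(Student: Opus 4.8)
The plan is to derive every assertion of the lemma by concatenating the single-interval analysis carried out just above (the ``first'' and ``second'' cases) over the successive intervals $[S_n,S_{n+1})$, and then reading off the behaviour at the $T_j$'s. For the adaptedness, note that $A$ is continuous, nondecreasing and $(\mathcal F_t)_t$-adapted, so for each $t$ its right-continuous inverse $\gamma_t$ is an $(\mathcal F_t)$-stopping time and $\mathcal G_t=\mathcal F_{\gamma_t}$ by definition. The data describing the current regime --- the stopping times $\sigma_n$, $S_n$, the ray indices $i_n$, hence which of $X_1,X_2$ currently plays the role of the $U$-coordinate and the sign entering the $V$-coordinate --- are $\mathcal G_{S_n}$-measurable, and on each $[S_n,S_{n+1})$ the pair $(U_t,V_t)$ is by construction a fixed measurable function of $\big(X_1(\gamma_t),X_2(\gamma_t)\big)$ and of the relation ``$X_1(\gamma_t)\sim X_2(\gamma_t)$''. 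Hence $(U,V)$ is $(\mathcal G_t)_t$-adapted and each $T_j$ is a $(\mathcal G_t)$-stopping time.

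For the Brownian motion and the equations on $[T_j,T_{j+1})$: on a single interval $[S_n,S_{n+1})$ one redoes the computation already done for $n=0$. In the first case ($i_n\notin I^c$) the time change is trivial and $(U,V)$ is a Brownian motion in $\cQ$ normally reflected at $\partial_1\cQ$, so $\Theta^n=0$; in the second case ($i_n\in I^c$) one argues as in the proof of Lemma \ref{tyjh}, using that along the shared ray $X_t+Y_t$ is locally constant (this being exactly the place where $X_1,X_2$ are driven by opposite white noises). After the time change by $\gamma$ this produces a two-dimensional Brownian motion $(B^{n,1},B^{n,2})$ with independent coordinates --- got by time-changing martingales built from $\mathcal W$ and the Freidlin--Sheu Brownian motions of $X_1,X_2$, hence $(\mathcal G_t)$-adapted --- such that $dU=dB^{n,1}-\tan(\Theta^n)\,dL(V)$ and $dV=dB^{n,2}+dL(V)$ on $[S_n,S_{n+1})$, with $L(V)$ the symmetric local time of $V$ at $0$. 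Since the only way a sub-interval $[S_n,S_{n+1})$ with $S_{n+1}\notin\{T_j\}$ ends is that $V$ returns to $0$, where the reflection simply continues (only $\Theta^n$ changing), $(U,V)$ is continuous across such times and the pieces $(B^{n,1},B^{n,2})$ concatenate into continuous processes on each block $[T_j,T_{j+1})$; subtracting from $U$ at each $T_{j+1}$ its $\mathcal G_{T_{j+1}-}$-measurable jump, they concatenate further into continuous processes $B^1,B^2$ on $[0,S_\infty)$. One then checks $\langle B^1\rangle_t=\langle B^2\rangle_t=t$ and $\langle B^1,B^2\rangle_t=0$ for $t<S_\infty$ (the point being that $\gamma$ together with the excursion bookkeeping leaves no instant of $[0,S_\infty)$ unaccounted for), so by Lévy's characterization $(B^1,B^2)$ is a two-dimensional $(\mathcal G_t)$-Brownian motion, and the displayed integral equations hold with $\Theta_s$ piecewise equal to the $\Theta^n$.

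For the jump relations and positivity: at $T_{j+1}=S_{n+1}$ the particle coded by the $U$-coordinate (say $X_1$, $n$ even) reaches $0$, so $U_{T_{j+1}-}=|X_1(\sigma_{n+1})|=0$; the next sub-interval restarts with the roles interchanged, the $U$-coordinate now coding $X_2$, whose distance to its vertex at time $\sigma_{n+1}$ equals $V_{T_{j+1}-}$, whence $U_{T_{j+1}}=V_{T_{j+1}-}$. Finally, $U_t>0$ on each $[S_n,S_{n+1})$ is part of the single-interval construction, and at each interior left endpoint $S_n<S_\infty$ one has $U_{S_n}=U^0_n=|X_1(\sigma_n)|$ or $|X_2(\sigma_n)|$, which is positive since $\sigma_n=\sigma_\infty$ is the only way both particles can vanish at once (and $S_n<S_\infty$ forces $\sigma_n<\sigma_\infty$); so $U_t>0$ for all $t<S_\infty$.

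The delicate part is the concatenation: checking that gluing the driving Brownian motions of the obliquely reflected pieces over the infinitely many regime-change times $S_n$ and jump times $T_j$ produces a genuine two-dimensional $(\mathcal G_t)$-Brownian motion --- the bracket and orthogonality computations, together with the verification that $\gamma$ and the reflection--excursion structure account for every instant of $[0,S_\infty)$ --- and, within it, getting the oblique term right in the single-interval argument, which is the technical core inherited from the proof of Lemma \ref{tyjh}.
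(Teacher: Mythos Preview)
The paper states this lemma without proof; it is a summary of the construction of $(U,V)$ that immediately precedes it, and your approach---analyze each $[S_n,S_{n+1})$ via the ``first'' and ``second'' cases, invoke the time-change computation of Lemma~\ref{tyjh} on each piece, then concatenate the driving Brownian motions across the $S_n$'s and $T_j$'s---is exactly what the paper intends.

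One caveat worth flagging: you write that along the shared ray $X_t+Y_t$ is locally constant, echoing the paper's displayed formula $1_{\{Y_t<0\}}d(X_t+Y_t)=0$. That formula is a typo in the paper. Since $W_1^{i_0}=-W_2^{i_0}$, on $\{Y<0\}$ one has $dX=dW_1^{i_0}$ and $dY=-d|X_2|=-dW_2^{i_0}=dW_1^{i_0}$, so it is $X_t-Y_t$ that stays constant (consistent with the paper's own geometric description ``parallel to $\{y=x\}$''). This sign is precisely what flips the local-time term when one reruns the computation of Lemma~\ref{tyjh} and yields the \emph{negative} angle $\Theta^n\le 0$ in the reflection; that negativity is then essential in the next step (the $\log R$ submartingale argument for $T_\infty=\infty$). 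With this correction your argument goes through as written.
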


Out of the noncontinuous process $(U,V)$, we construct a continuous process $(Z^r_t=(X^r_t,Y^r_t),t<T_{\infty})$ by\,:
For all $n\ge 0$,  $(X^r_t,Y^r_t)=(U_t,V_t)$ for $t\in [T_{2n},T_{2n+1})$ and $(X^r_t,Y^r_t)=(V_t,U_t)$ for $t\in [T_{2n+1},T_{2n+2}[$. Then, putting everything together, we get the following

\begin{lemma}
The process $(Z^r_t,t<T_{\infty})$ is an obliquely reflected Brownian motion on $\mathcal{Q}$ started from $(|x_1|,0)$, and we have, for all $n\ge 0$,
\begin{eqnarray*}
T_{2n+1}&=&\inf\{t\ge T_{2n} : Y^r_t=0\},\\
T_{2n+2}&=&\inf\{t\ge T_{2n+1} : X^r_t=0\},
\end{eqnarray*}
$\lim_{t\uparrow T_\infty} Z_t^r=(0,0)$ and for all $t<T_{\infty}$,
\begin{eqnarray*}
dX^r_t &=& dB^1_t+dL_t({X^r})-\tan\big( \Theta_t\big) dL_t(Y^r),\\
dY^r_t &=& dB^2_t-\tan\big( \Theta_t\big) dL_t({X^r})+dL_t(Y^r).
\end{eqnarray*}
Moreover $T_\infty=\infty$ implies $\sigma_\infty=\infty$.
\end{lemma}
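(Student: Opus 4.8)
The plan is to obtain the lemma by putting together the objects already constructed: the process $(Z^r_t)$, built from the c\`adl\`ag process $(U,V)$ by exchanging the two coordinates on alternate sub-intervals, and the preceding lemma, which supplies the $(\mathcal G_t)$-adaptedness of $(U,V)$, its semimartingale decomposition on each $[T_j,T_{j+1})$, a two-dimensional $(\mathcal G_t)$-Brownian motion $(B^1,B^2)$ driving it, and the junction relations $U_{T_{j+1}-}=0$, $U_{T_{j+1}}=V_{T_{j+1}-}$, $U_t>0$ on $(T_j,T_{j+1})$ (hence also $V_{T_j}=0$, read off from $V_t=\int_{T_j}^t(dB^2_s+dL_s(V))$).

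First I would check that $Z^r$ is continuous on $[0,T_\infty)$. Inside each $(T_j,T_{j+1})$ continuity is inherited from $(U,V)$, so only the junction times matter. At $T_{2n+1}$ the left limit of $Z^r$ is $(U_{T_{2n+1}-},V_{T_{2n+1}-})=(0,V_{T_{2n+1}-})$, while its value is $(V_{T_{2n+1}},U_{T_{2n+1}})=(0,V_{T_{2n+1}-})$, using $V_{T_{2n+1}}=0$ and $U_{T_{2n+1}}=V_{T_{2n+1}-}$; the check at $T_{2n+2}$ is symmetric. Next, substituting the coordinate swaps into the decomposition of $(U,V)$ and completing it with the identically vanishing local-time terms — on $[T_{2n},T_{2n+1})$ one has $X^r=U>0$, hence $L(X^r)\equiv 0$, and on $[T_{2n+1},T_{2n+2})$ one has $Y^r=U>0$, hence $L(Y^r)\equiv 0$ — one obtains, on every sub-interval,
\begin{eqnarray*}
dX^r_t &=& d\beta^1_t+dL_t(X^r)-\tan(\Theta_t)\,dL_t(Y^r),\\
dY^r_t &=& d\beta^2_t-\tan(\Theta_t)\,dL_t(X^r)+dL_t(Y^r),
\end{eqnarray*}
where $(\beta^1,\beta^2)$ equals $(B^1,B^2)$ on the even sub-intervals and $(B^2,B^1)$ on the odd ones. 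This concatenation is a continuous $(\mathcal G_t)$-local martingale with $\langle\beta^1\rangle_t=\langle\beta^2\rangle_t=t$ and $\langle\beta^1,\beta^2\rangle\equiv 0$, hence a planar Brownian motion by L\'evy's theorem; renaming it $(B^1,B^2)$ yields the announced system and exhibits $(Z^r_t)$ as a Brownian motion in $\cQ$ obliquely reflected on $\partial\cQ$ with the time-dependent angle $\Theta_t$. The formulas for $T_{2n+1}$ and $T_{2n+2}$ are then read off from the construction, since on each sub-interval $Z^r$ keeps one coordinate (the strictly positive $U$) away from $0$ and is stopped when the other coordinate first vanishes, the two sides of $\cQ$ exchanging roles at the successive $T_j$.

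It remains to treat the behaviour at $T_\infty$. The implication $T_\infty=\infty\Rightarrow\sigma_\infty=\infty$ is immediate: $T_\infty=A_{\sigma_\infty}$ with $A_t=\int_0^t 1_{\{X_1(s)\not\sim X_2(s)\}}\,ds\le t$, whence $\sigma_\infty\ge A_{\sigma_\infty}=T_\infty$. For the convergence $\lim_{t\uparrow T_\infty}Z^r_t=(0,0)$ I would argue on $\{T_\infty<\infty\}$ (the statement being empty otherwise): there $\sum_j S_{j+1}=T_\infty<\infty$, and by the scaling property (Proposition \ref{scalprop}), conditionally on $\mathcal G_{T_j}$, $S_{j+1}$ equals $(U^0_j)^2$ times a random variable whose law depends only on $\Theta^j$, is a.s. finite and puts mass at least $c$ on $[\delta,\infty)$ for fixed $\delta,c>0$ (the angles $\Theta^j$ ranging over a fixed compact subset of $]-\pi/2,0]$); a conditional Borel--Cantelli argument then forces $\sum_j(U^0_j)^2<\infty$, hence $|Z^r_{T_j}|=U^0_j\to0$, and combining this with a Lemma \ref{rtt}-type bound on $\sup_{t\in[T_j,T_{j+1})}|Z^r_t|$, again via conditional Borel--Cantelli, gives $Z^r_t\to(0,0)$ as $t\uparrow T_\infty$, in the spirit of Corollary \ref{contin}. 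This last convergence is the only step beyond routine bookkeeping of the construction; it is the counterpart, for the present nonpositive reflection angles, of the estimates carried out in Section \ref{tdrbm}.
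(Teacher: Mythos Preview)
The paper itself gives no proof: it simply writes ``putting everything together, we get the following'' and states the lemma. Your proposal is therefore considerably more detailed than what the paper provides, and the bookkeeping part --- continuity of $Z^r$ at the junction times $T_j$ via the relations $U_{T_{j+1}-}=0$, $U_{T_{j+1}}=V_{T_{j+1}-}$, $V_{T_j}=0$; the derivation of the SDE by swapping coordinates and inserting the vanishing local-time terms; the L\'evy argument for the concatenated driving noise; the identification of $T_{2n+1},T_{2n+2}$; and the implication $T_\infty=\infty\Rightarrow\sigma_\infty=\infty$ from $T_\infty=A_{\sigma_\infty}\le\sigma_\infty$ --- is correct and matches exactly what ``putting everything together'' is meant to convey.

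There is one genuine imprecision in your sketch for $\lim_{t\uparrow T_\infty}Z^r_t=(0,0)$. You assert that, conditionally on $\mathcal G_{T_j}$, the increment $T_{j+1}-T_j$ equals $(U_{T_j})^2$ times a random variable whose law depends only on a \emph{single} angle $\Theta^j$. That is not quite right here: between two consecutive $T_j$'s there are in general several times $S_n$ of the finer sequence, and the reflection angle $\Theta_t$ changes at each of them (the value of $\Theta^n$ depends on which ray the relevant particle sits on, and this evolves as a Markov chain). So the scaling claim, as phrased at the $T_j$ level, does not hold.

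The fix is to run your argument at the $S_n$ level rather than the $T_j$ level. On each $[S_n,S_{n+1})$ the angle is genuinely fixed at $\Theta^n$, the process restarts from $(U^0_n,0)$, and the scaling of Proposition~\ref{scalprop} applies cleanly: conditionally on $\mathcal G_{S_n}$, $S_{n+1}-S_n$ is $(U^0_n)^2$ times a random variable whose law depends only on $\Theta^n$ (and on a finite amount of graph data determining when $U$ versus $V$ hits zero first), and these laws are uniformly bounded away from zero on some $[\delta,\infty)$ since $\Theta^n$ ranges over a finite set in $]-\pi/2,0]$. From $\sum_n(S_{n+1}-S_n)=T_\infty<\infty$ one then gets $\sum_n(U^0_n)^2<\infty$ by conditional Borel--Cantelli, and since $|Z^r_{S_n}|=U^0_n$ this gives $|Z^r_{S_n}|\to 0$; a second conditional Borel--Cantelli using a Lemma~\ref{rtt}-type bound (adapted to the present nonpositive angles, which only helps) controls $\sup_{[S_n,S_{n+1})}|Z^r_t|$ and finishes, exactly as in Corollary~\ref{contin}. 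With this adjustment your approach goes through.
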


To conclude the proof of Proposition \ref{siginftyfini}, it remains to prove that a.s. $T_{\infty}=\infty$. We exactly follow \cite[Page 161]{MR1121940}. For $a\ge 0$, define 
$$\tau_{a}=\inf\{t\ge 0 : |Z^r_t|=a\}.$$
Take $\epsilon<|x_1|<A$ and set $\tau_{\epsilon,A}=\tau_{\epsilon}\wedge\tau_A$. Then, by It\^o's formula, setting $R_t=|Z^r_t|$, we have that, for all $t\ge 0$,
$$\log(R_{t\wedge\tau_{\epsilon,A}})=\log(|x_1|)+M_t+C_t$$
where $M$ is a martingale started from $0$ and $C$ is a nonnegative nondecreasing process (using that $\Theta_t\le 0$). 
Thus by letting $t\to\infty$, we get $E[\log(R_{\tau_{\epsilon,A}})]\ge \log(|x_1|)$. So
$$\log(\epsilon) \mathbb P(\tau_{\epsilon}<\tau_{A}) + \log(A) (1-\mathbb P(\tau_{\epsilon}<\tau_{A})) \ge \log(|x_1|)$$
and consequently
$$\mathbb P(\tau_{\epsilon}<\tau_{A})\le \frac{\log(A)-\log(|x_1|)}{\log(A)-\log(\epsilon)}.$$
Replacing $\epsilon$ with $\epsilon(A)=A^{-A}$, yields
$$\mathbb P(T_{\infty}<\infty)=\lim_{A\rightarrow\infty} \mathbb P(T_{\infty}<\tau_{A})\le \lim_{A\rightarrow\infty}\mathbb P(\tau_{\epsilon(A)}<\tau_{A})=0.$$
\end{proof}
\section{Final remarks}\label{final}
It would be interesting to extend the framework of the present paper to the case of a star graphs with an infinite number of rays $G=\cup_{n\in \NN} E_n$. Suppose we are given a family $p=(p_n)_{n\in \NN}\subset ]0,1[$ such that $\sum_n p_n=1$. Then the WBM associated to $p$ can still be defined via its semigroup (as in the introduction). 
It satisfies also a Freidlin-Sheu formula similar to the finite case (see \cite{MR501878}):
$$df(Z_t)=f'(Z_t)dB^Z_t + \frac{1}{2} f''(Z_t) dt$$
where $B^Z$ is again the martingale part of $|Z|$ and $f$ runs over an appropriate domain of  functions $\mathcal D$. 
Now suppose given a family $(W^n)_{n\in\NN}$ of independent Brownian motions, then the natural extension of $(\hbox{ISDE})$ associated to $p$ is the following
$$df(Z_t)=\sum_n (f'1_{E_n}) (Z_t) dW^n_t + \frac{1}{2} f''(Z_t) dt,\ f\in\mathcal D$$
which we denote again by $(\hbox{ISDE})$. The Brownian motion $B^Z$ has also the martingale representation property for $(\mathcal F^Z)_t$ \cite[Proposition 19 (ii)]{MR1655299}. Thus following our arguments, under some conditions on $Z$, the law of any solution $(Z,W^n, n\in\NN)$ to $(\hbox{ISDE})$ is unique. One could also investigate stochastic flows solutions of $(\hbox{ISDE})$. However, in contrast to the finite case, here we have 
$$\inf\big\{\arctan\big(\frac{p_n}{1-p_n}\big) : n\in\NN\big\}=0.$$
This is the new difficulty with respect to the present paper. We leave the question of existence of a SFM in this case open.

\medskip
\textbf{Acknowledgement.} We are grateful to Michel \'Emery for
very useful discussions.

\bibliographystyle{plain}
\bibliography{Bil6}
\end{document}